\definecolor{darkgreen}{rgb}{0,0.5,0}
\def\ph{\vphantom{$A^A_A$}}
\newtheorem{theointro}{Theorem}
\newtheorem{theo}{Theorem}[section]
\newtheorem{prop}[theo]{Proposition}
\newtheorem{lem}[theo]{Lemma}
\newtheorem{cor}[theo]{Corollary}
\newtheorem{conj}[theo]{Conjecture}
\theoremstyle{definition}
\newtheorem{deftn}[theo]{Definition}
\newtheorem{rem}[theo]{Remark}
\newcommand{\N}{\mathbb N}
\newcommand{\Z}{\mathbb Z}
\newcommand{\Q}{\mathbb Q}
\newcommand{\F}{\mathbb F}
\newcommand{\R}{\mathbb R}
\newcommand{\m}{\mathfrak m}
\renewcommand{\P}{\mathbb P}
\renewcommand{\L}{\mathcal L}
\renewcommand{\L}{\Omega}
\newcommand{\B}{\mathbb B}
\newcommand{\calB}{\mathcal B}
\newcommand{\Sp}{\mathbb S}
\newcommand{\E}{\mathbb E}
\newcommand{\val}{\text{\rm val}}
\renewcommand{\sp}{\text{\rm sp}}
\renewcommand{\mod}{\:\text{\rm mod}\:}
\newcommand{\piv}{\text{\rm piv}}
\newcommand{\can}{\text{\rm can}}
\newcommand{\dist}{\text{\rm dist}}
\newcommand{\an}{\text{\rm an}}
\newcommand{\Frac}{\text{\rm Frac}\:}
\newcommand{\card}{\text{\rm Card}\:}
\newcommand{\ord}{\text{\rm ord}}
\renewcommand{\leq}{\leqslant}
\renewcommand{\geq}{\geqslant}
\title{Almost all non-archimedean Kakeya sets have measure zero}
\author{Xavier Caruso}
\date\today
\begin{document}

\maketitle

\begin{abstract}
We study Kakeya sets over local non-archimedean fields with a 
probabilistic point of view: we define a probability measure on the 
set of Kakeya sets as above and prove that, according to this measure,
almost all non-archimedean Kakeya sets are neglectable according to 
the Haar measure.
We also discuss possible relations with the non-archimedean Kakeya
conjecture.
\end{abstract}

\setcounter{tocdepth}{2}
\tableofcontents

\bigskip

\noindent\hfill\hrulefill\hfill\null

\bigskip

At the beginning of the 20th century, Kakeya asks how small can be a 
subset of $\R^2$ obtained by rotating a needle of length $1$ continuously through 
$360$ degrees within it and returning to its original position. A set
satisfying the above requirement is today known as a \emph{Kakeya set}
(or sometimes \emph{Kakeya needle set}) in $\R^2$.
In 1928, Besikovitch \cite{besikovitch} constructed a subset of $\R^2$
with Lebesgue measure zero containing a unit length segment in each 
direction and derived from this the existence of Kakeya sets with
arbitrary small positive Lebesgue measure. Since this, Kakeya sets
have received much attention because they have connections
with important questions in harmonic analysis. In particular, lower
bounds on the size of a Kakeya set have been found: it has been notably 
established that any Kakeya set in $\R^2$ must have Hausdorff dimension 
$2$ (see \cite[Theorem~2]{green} for a short and elegant proof).

Kakeya's problem extends readily to higher dimensions: a \emph{Besikovitch
set} in $\R^d$ is a subset of $\R^d$ containing a unit length segment in
each direction while a \emph{Kakeya set} in $\R^d$ is a set obtained by
rotating \emph{continuously} a needle of length $1$ is all directions
(parametrized either by the $(d{-}1)$-dimensional sphere of the 
$(d{-}1)$-dimensional projective space). We refer to \S \ref{sssec:R}
for precise definitions. Kakeya sets in $\R^d$ with Lebesgue measure
zero exist as well: the product of $\R^{d-2}$ by a neglectable Kakeya 
set in $\R^2$ makes the job. As for lower bounds, it has been proved
by Wolff \cite{wolff} that a Kakeya set in $\R^d$ has Hausdorff dimension 
$\geq \frac{d+2}2$. More recently Katz and Tao \cite{katz} improved the
lower bound to $(2 - \sqrt 2)(d-4) + 3$.
Experts however believe that these results are far from
being optimal and actually conjecture that a Kakeya set in $\R^d$ should 
always have Hausdorff dimension $d$: this is the so-called Kakeya 
conjecture.

\medskip

More recently Kakeya's problem was extended over other fields. The first 
case of interest was that of finite fields and was first considered in 
\cite{wolff} by Wolff. Given a finite field $\F_q$, a \emph{Besikovitch 
set} in $\F_q^d$ is a subset of $\F_q^d$ containing an affine line in 
each direction (note that the length condition has gone). Wolff wondered 
whether there exists a positive constant $c_d$ depending only $d$ such
that any Besikovitch set in $\F_q^d$ contains at least $c_d \cdot q^d$ 
elements. A positive answer (leading to $c_d = \frac 1{d!}$) was given 
by Dvir in his famous paper \cite{dvir}.

In \cite{ellenberg}, Ellenberg, Oberlin and Tao introduced Besikovitch 
sets over $\F_q[[t]]$ and asked whether there exists such a set whose 
Haar measure is zero. Dummit and Hablicsek addressed this question in 
\cite{dummit} and gave to it a positive answer: they proved that, for 
all $d \geq 2$ and all finite field $\F_q$, there does exist a 
zero-measure Besikovitch set in $\F_q[[t]]^d$. They more generally 
defined Besikovitch sets over any ring $R$ admitting a Haar measure 
$\mu$ for which $\mu(R)$ is finite and, for those rings, they stated
a straightforward analogue of the Kakeya conjecture.
Apart from $\F_q[[t]]$, an interesting ring $R$ which falls within 
Dummit and Hablicsek's framework is $R = \Z_p$, the ring of $p$-adic 
integers. Dummit and Hablicsek then proved the Kakeya conjecture in 
dimension $2$ for $R = \F_q[[t]]$ and $R = \Z_p$. The existence of
zero-measure Besikovitch sets over $\Z_p$ was proved more recently
by Fraser in \cite{fraser}.

\medskip

The general aim of this paper is to study further the size of 
Kakeya/Besikovitch sets over non-archimedean local fields, \emph{i.e.} 
$\F_q((t)) = \Frac \F_q[[t]]$, $\Q_p = \Frac \Z_p$ and their extensions. 
Our main originality is that we adopt a probabilistic point of view. 

Let us describe more precisely our results. Let $K$ be a fixed 
non-archimedean local field: similarly to $\R$, it is equipped with an 
absolute value which turns it into a topological locally compact field. 
It is thus equipped with a Haar measure $\mu$ giving a finite mass to 
any bounded subset. Since $K$ is non-archimedean, the unit ball $R$ of 
$K$ is a subring of $K$ (it is $\F_q[[t]]$ when $K = \F_q((t))$ and 
$\Z_p$ when $K = \Q_p$); we normalize $\mu$ so that $\mu(R) = 1$. In
this setting, we provide a definition for Kakeya sets and Besikovitch 
sets\footnote{We emphasize that, similarly to the real setting, we
make the difference between Kakeya and Besikovitch sets: basically,
an additional continuity condition (corresponding to the fact that
Kakeya's needle has to move continuously) is required for the former.} 
and endow the set of Kakeya sets included in $R^d$ with a probability 
measure, giving this way a precise sense to the notion of random 
non-archimedean Kakeya set. Our main theorem is the following.

\begin{theointro}[\emph{cf} Corollary~\ref{cor:measure}]
\label{theointro:measure}
Almost all Kakeya sets sitting in $R^d$ have measure zero.
\end{theointro}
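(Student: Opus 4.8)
The plan is to compute the \emph{expected} Haar measure of a random Kakeya set and show that it vanishes; since the measure is nonnegative, almost‑sure vanishing follows at once, which is Corollary~\ref{cor:measure}. Write $K$ for a random Kakeya set in $R^d$ and $\mu$ for the normalised Haar measure. Applying Tonelli's theorem to the nonnegative function $(\omega,x)\mapsto\1_{K(\omega)}(x)$ gives
\[
\E\bigl[\mu(K)\bigr]\;=\;\int_{R^d}\Pr[x\in K]\,\mu(dx),
\]
so it is enough to prove that $\Pr[x\in K]=0$ for $\mu$‑almost every $x\in R^d$. Fix such an $x$ and assume first that one of its coordinates is a unit (the set of $x$ not of this form being handled at the end by rescaling). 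Because $K$ is closed, $x\in K$ holds iff $x$ lies in the level‑$n$ approximation $K_n$ — the union of the balls $y+\m^nR^d$ meeting $K$ — for every $n$; these events decrease with $n$, so $\Pr[x\in K]=\lim_n\Pr[x\in K_n]$, and the task is to show $\Pr[x\in K_n]\to 0$.

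The heart of the matter is the structure of the event $\{x\in K_n\}$: it says that at least one of the needles making up $K$ passes through the ball of $x$ modulo $\m^n$, and — by the definition of the probability measure, which builds the needle in direction $v$ by refining its offset digit by digit along the tree of directions $\varprojlim_n\P^{d-1}(R/\m^n)$ with fresh uniform digits — two needles whose directions agree modulo $\m^i$ have offsets agreeing modulo $\m^i$. Hence, for a node $u$ of the direction tree (a direction mod $\m^i$), whether some needle lying below $u$ can still reach the ball of $x$ is governed by a single $\F_q^{d-1}$‑valued ``critical'' offset digit at each level, and the $q^{d-1}$ children of $u$ fail this test independently, each staying critical with probability $q^{-(d-1)}$ (here one uses that for a generic $x$ the affine map sending the new direction digit to the new critical offset digit is a bijection of $\F_q^{d-1}$). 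Writing $R_i$ for the probability that no needle below a critical node at level $i$ reaches the ball of $x$, this yields
\[
R_n=0,\qquad R_i=f(R_{i+1})\ \ \text{with}\ \ f(t)=\bigl(1-\tfrac{1-t}{q^{d-1}}\bigr)^{q^{d-1}},
\]
and $\Pr[x\notin K_n]=R_0=f^{\circ n}(0)$, since the root of the tree is trivially critical.

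It then remains to iterate $f$. On $[0,1]$ the function $f$ is increasing and strictly convex, with $f(1)=1$ and $f'(1)=1$; hence $f$ lies above its tangent line $y=t$ at $1$, so $f(t)>t$ for every $t\in[0,1)$ while $f(1)=1$. Consequently the iterates $f^{\circ n}(0)$ form an increasing sequence in $[0,1]$ whose limit is a fixed point of $f$, necessarily $1$. Therefore $\Pr[x\notin K_n]\to 1$, i.e.\ $\Pr[x\in K]=0$, for every generic $x$. For an $x$ all of whose coordinates lie in $\m$, note that the ball of $x$ at level $n$, rescaled by a uniformiser, is a ball of $R^d$ at level $n{-}1$, and that the part of $K$ sitting in $\m R^d$ rescales to a union of needles of the very same kind; an induction on the valuation of $x$ (equivalently, a self‑similarity argument) extends $\Pr[x\in K]=0$ to $\mu$‑almost every $x$, and with it $\E[\mu(K)]=0$, so $\mu(K)=0$ almost surely.

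\textbf{Expected main obstacle.} The delicate step is the second paragraph: one must read off from the definition of a random Kakeya set exactly how the continuity constraint — the compatible refinement of needles along the direction tree — correlates the offsets of nearby needles, and package those correlations into a clean recursion. Both crude approaches fail: a union bound over the $\asymp q^{n(d-1)}$ directions only gives $\Pr[x\in K_n]=O(1)$, and if the offsets were genuinely independent one would instead find $\Pr[x\in K_n]\to 1-e^{-1}>0$, so that continuity is indispensable and must be used in an essential way. One also has to notice the slightly counter‑intuitive point that $f$ has no fixed point below $1$ even though $f(0)>0$, which is precisely what drives the iterates all the way up to $1$.
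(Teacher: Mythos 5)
Your argument is correct in substance, but it takes a genuinely different route from the paper's. The paper never looks at pointwise hitting probabilities: it proves the exact formula of Theorem~\ref{theo:main} by inclusion--exclusion over all finite subsets $A$ of directions, computing $\E[C_A]$ for the common points of the corresponding needles, counting the subsets $A$ with a prescribed height function, and resumming the alternating series via a recursive identity that produces the sequence $(u_n)$; Corollary~\ref{cor:measure} is then exactly the fixed-point argument you also use. You instead apply Tonelli to reduce the mean of $X_n$ to the probability $\Pr[x\in N_n(f)]$ for a fixed point $x$, and compute that probability by a branching-process recursion along the tree $\varprojlim_n\P^{d-1}(R_n)$, using that the digits $g_i$ are uniform and independent and that the $1$-Lipschitz condition forces needles whose directions agree modulo $\m^i$ to have offsets agreeing modulo $\m^i$. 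That mechanism is sound: conditionally on a direction being alive at level $i$ (with its unique critical value of the parameter $t$ modulo $\m^i$), each of its $q^{d-1}$ children stays alive exactly when its fresh offset digit falls in a prescribed coset of a line of $k^d$, hence with probability $q^{1-d}$, independently across children; this gives $R_i=f(R_{i+1})$, and your convexity argument correctly forces $f^{\circ n}(0)\to 1$. Your route is shorter and more probabilistic, and it trades the paper's combinatorial bookkeeping (height functions, weights) for conditional independence along the tree; the paper's machinery, on the other hand, is what yields the exact mean and is better positioned for finer statistics such as higher moments of the $X_n$'s.

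Two points should be corrected, neither of which affects the conclusion. First, the root of the direction tree does not have $q^{d-1}$ children: level one is all of $\P^{d-1}(k)$, of cardinality $1+q+\cdots+q^{d-1}$, while the fibres of $\sp_{i+1,i}$ have cardinality $q^{d-1}$ only from level one onwards (cf.\ Eq.~\eqref{eq:cardP1}). Hence $\Pr[x\notin N_n(f)]$ is not $f^{\circ n}(0)$ but $\bigl(f^{\circ n}(0)\bigr)^{1+q^{-1}+\cdots+q^{-(d-1)}}$, in agreement with Theorem~\ref{theo:main}; since $f^{\circ n}(0)\to 1$ anyway, the corollary is untouched, and with this fix your computation actually reproves the exact formula. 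Second, no genericity of $x$ is needed and your final rescaling/self-similarity step can be deleted: at every level the set of offset digits keeping a child alive is a coset of the line $k\cdot\overline{\can(a)}$ in $k^d$, of cardinality $q$ for \emph{every} $x\in R^d$, because $\can(a)$ has a coordinate equal to $1$; the hitting probability is therefore independent of $x$, which is also why Tonelli immediately returns the mean itself.
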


We emphasize that the above theorem concerns actual Kakeya sets (and not 
Besikovitch sets). It then shows a clear dichotomy between the 
archimedean and the non-archimedean setup: in the former, Kakeya sets 
have necessarily positive measure (through it can be arbitrarily small) 
while, in the latter, almost all of them have measure zero.

We will deduce Theorem~\ref{theointro:measure} from a much more accurate 
result providing an \emph{exact} value for the average size of 
$\varepsilon$-neighbourhoods of Kakeya sets. Before stating (a weak 
version of) it, recall that the $\varepsilon$-neighbourhood of a subset 
$N \subset K^d$ consists of points whose distance to $N$ is at most 
$\varepsilon$. Let $q$ be the cardinality of the residue field of $K$, 
\emph{i.e.} of $q = \card R/\m$ where $\m$ is the open unit ball in $K$. 
When $R = \F_q((t))$, we can check that $\m = t \cdot \F_q[[t]]$, so 
that $q$ is indeed $q$. Similarly when $R = \Q_p$, we have $\m = p\Z_p$ 
and $q$ is equal to $p$.

\begin{theointro}[\emph{cf} Proposition~\ref{prop:equiv}] 
\label{theointro:equiv}
The expected value of the Haar measure of the $\varepsilon$-neighbourhood 
of a random Kakeya set sitting in $R^d$ is equivalent to:
$$\frac {2 \cdot (q^d - 1)}{(q-1)(q^{d-1}-1)}
\cdot \frac 1 {|\log_q \varepsilon|}$$
when $\varepsilon$ goes to $0$.
\end{theointro}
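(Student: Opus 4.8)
The plan is to compute $\E[\mu(\mathcal K_\varepsilon)]$, where $\mathcal K_\varepsilon$ denotes the $\varepsilon$-neighbourhood of a random Kakeya set $\mathcal K \subset R^d$, by first passing to $\varepsilon = q^{-n}$, then reducing to a single covering probability, and finally identifying that probability as the survival probability of an explicit \emph{critical} Galton--Watson process. Since $K$ is ultrametric, for $q^{-n}\le\varepsilon<q^{-(n-1)}$ the neighbourhood $\mathcal K_\varepsilon$ equals $\mathcal K_{q^{-n}}$, which is a union of balls of radius $q^{-n}$; hence $\mu(\mathcal K_{q^{-n}}) = q^{-nd}\cdot\#\{B:B\cap\mathcal K\neq\emptyset\}$, the count running over the $q^{nd}$ balls of radius $q^{-n}$ in $R^d$. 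A ball $B$ meets $\mathcal K$ iff it meets the needle of some direction $w\in\P^{d-1}(K)$, and this happens iff the reduction mod $\m^n$ of that needle --- a ``level-$n$ needle'' attached to the reduction $v$ of $w$ in $\P^{d-1}(R/\m^n)$ --- contains $B$. Thus $\E[\mu(\mathcal K_{q^{-n}})] = q^{-nd}\sum_B \Pr[B\text{ covered at level }n]$, and since the random model is equivariant under translations of $R^d$ (which act transitively on balls of a fixed radius and carry needles to needles) all these probabilities agree, so $\E[\mu(\mathcal K_{q^{-n}})] = \Pr[B_0\text{ covered at level }n]$ for any fixed $B_0$.

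Next I would unwind the recursive description of $\mathcal K$ along the tree of directions: the root has $\frac{q^d-1}{q-1}$ children (the points of $\P^{d-1}(\F_q)$), every deeper vertex has $q^{d-1}$ children, and specifying $\mathcal K$ amounts to choosing, uniformly and independently, one of the $q^{d-1}$ cosets of the relevant line in $\F_q^d$ for each level-$1$ direction, and then, at each step down the tree, one of the $q^{d-1}$ lifts of the father's level-$k$ needle to a level-$(k{+}1)$ needle in the given child direction. Fix $B_0$ and call a level-$k$ vertex \emph{alive} if its level-$k$ needle contains the reduction of $B_0$ mod $\m^k$. Counting cosets shows that the $q^{d-1}$ valid lifts of a given level-$k$ needle partition its preimage in $(R/\m^{k+1})^d$, so exactly one of them contains the reduction of $B_0$; hence, conditionally on its father being alive, a level-$(k{+}1)$ vertex is alive with probability $q^{1-d}$, independently over siblings and over the whole level-$k$ configuration, and similarly a level-$1$ vertex is alive with probability $q^{1-d}$, independently over the $\frac{q^d-1}{q-1}$ of them. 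Since the needle at $v$ depends only on the choices along the path from the root to $v$, the process $Z_k := \#\{\text{alive vertices at level }k\}$ is a Galton--Watson process with offspring law $\mathrm{Bin}(q^{d-1},q^{1-d})$, first generation of law $\mathrm{Bin}\big(\tfrac{q^d-1}{q-1},q^{1-d}\big)$, and ``$B_0$ covered at level $n$'' is exactly $\{Z_n\ge 1\}$. The offspring law has mean $1$ (criticality) and variance $\sigma^2 = 1-q^{1-d} = \tfrac{q^{d-1}-1}{q^{d-1}}$.

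Finally, Kolmogorov's classical estimate for critical Galton--Watson processes with finite offspring variance gives $1-f^{\circ n}(0)\sim\tfrac{2}{\sigma^2 n}$, where $f$ is the offspring generating function (the process started from one individual). Writing $g$ for the generating function of the first generation and using $\Pr[Z_n=0]=g\big(f^{\circ(n-1)}(0)\big)$, a first-order Taylor expansion of $g$ at $1$ (with $g(1)=1$ and $g'(1)=\E[Z_1]=\tfrac{q^d-1}{q^{d-1}(q-1)}$) yields
$$\Pr[Z_n\ge 1] = g'(1)\cdot\big(1-f^{\circ(n-1)}(0)\big)+O(n^{-2}) \;\sim\; \frac{q^d-1}{q^{d-1}(q-1)}\cdot\frac{2}{\sigma^2 n} \;=\; \frac{2(q^d-1)}{(q-1)(q^{d-1}-1)}\cdot\frac1n.$$
Since $|\log_q\varepsilon| = n+O(1)$ on each interval $[q^{-n},q^{-(n-1)})$, this gives the asserted equivalence for $\E[\mu(\mathcal K_\varepsilon)]$ as $\varepsilon\to 0$.

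\textbf{Main obstacle.} The delicate step is the middle one: setting up the probability space so that $(Z_k)$ is \emph{genuinely} a Galton--Watson process --- i.e. that conditioning on the level-$k$ configuration leaves the lifts to level $k{+}1$ independent and identically distributed, which rests on the exact combinatorics of lifting a needle (always $q^{d-1}$ lifts, partitioning the preimage, so hitting $B_0$ with probability precisely $q^{1-d}$). This is also what forces criticality and pins down the constant. The remaining work is to recall Kolmogorov's theorem with enough precision that its $o(1)$ survives composition with $g$, which is routine.
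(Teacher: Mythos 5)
Your plan is correct and it reaches the stated constant by a genuinely different route from the paper's. The paper first proves an \emph{exact} formula for $\E[X_n]$ (Theorem~\ref{theo:main}, via its algebraic form Theorem~\ref{theo:mainalg}) by inclusion--exclusion over all finite sets of directions, computing each directional expectation (Proposition~\ref{prop:directionalmean}), counting direction sets with a prescribed height function (Proposition~\ref{prop:heightcount}), and resumming to get the recursion $u_{n+1}=1-(1-u_n/q^{d-1})^{q^{d-1}}$; the asymptotics then follow from the elementary manipulation $w_n=1/u_n$, $w_{n+1}-w_n\to\frac{q^{d-1}-1}{2q^{d-1}}$ (Proposition~\ref{prop:equiv}). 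You instead reduce $\E[X_n]$ to the covering probability of a single ball (legitimate: constants are $1$-Lipschitz, so translation preserves the Haar measure on $\Omega$ and acts transitively on balls of radius $q^{-n}$), and read the product/torsor structure of the uniform measure on $\Omega_n$ (Propositions~\ref{prop:torsor} and~\ref{prop:uniformLn}) as a critical Galton--Watson process with offspring law $\mathrm{Bin}(q^{d-1},q^{1-d})$ and first generation $\mathrm{Bin}\big(\frac{q^d-1}{q-1},q^{1-d}\big)$; your key combinatorial claim is sound, since the $q^d$ admissible values of $f_{k+1}(a')$ produce $q^{d-1}$ pairwise disjoint cosets of $R_{k+1}\can_{k+1}(a')$ (each with multiplicity $q$) which exactly tile the preimage of the father's needle, giving the conditional Bernoulli$(q^{1-d})$ structure and independence across directions. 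In fact your model recovers the paper's exact theorem for free: $u_n=1-f^{\circ n}(0)$ for the offspring generating function $f(s)=(1-q^{1-d}(1-s))^{q^{d-1}}$, and $1-g\big(f^{\circ(n-1)}(0)\big)=1-(1-u_n)^{1+q^{-1}+\cdots+q^{-(d-1)}}$, which is precisely Theorem~\ref{theo:main}; conversely, the paper's Proposition~\ref{prop:equiv} is essentially a hands-on proof of Kolmogorov's estimate specialized to this generating function. What your route buys is conceptual clarity --- criticality explains the $1/n$ decay and the factor $2/\sigma^2$ explains the constant --- at the cost of importing the classical branching-process theorem; the paper's route is longer but self-contained and develops the height-function machinery it needs anyway for the exact count.
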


This refined version of Theorem~\ref{theointro:measure} seems to us 
quite interesting because it underlines that, although Kakeya sets tend 
to be neglectable according to the Haar measure, they are not that small 
on average as reflected by the logarithmic decay with respect to 
$\varepsilon$. In particular Theorem~\ref{theointro:equiv} is in line 
with the non-archimedean Kakeya conjecture and might even be thought (with 
caution) as an average version of it.

Of course, beyond the mean, one would like to study further
the random variables $X_\varepsilon$ taking a random Kakeya set sitting 
in $R^d$ to the Haar measure of its $\varepsilon$-neighbourhood. For 
instance, in the direction of the non-archimedean Kakeya conjecture, one may 
ask the following question: can one compute higher moments of the 
$X_\varepsilon$'s (possibly extending the technics of this paper) and 
this way derive interesting informations about their minimum? In the 
real setting, results in related directions were obtained by Babichenko 
and al. \cite[Theorem~1.6]{babichenko} in the $2$-dimensional case.

\bigskip

This paper is organized as follows. 
In Section \ref{sec:results}, we define non-archimedean Kakeya/Besikovitch 
sets together with the probability measure on the set of Kakeya sets we 
shall work with afterwards. We then state (without proof) our main 
theorem which is yet another refined version of 
Theorem~\ref{theointro:equiv}. We then derive from it several 
corollaries.
Section \ref{sec:algebraic} provides a totally algebraic reformulation 
of the statements and results of Section \ref{sec:results}. Its interest 
is twofold. First it allows us to extend to the torsion case the notion 
of Kakeya/Besikovitch sets together with the Kakeya conjecture. Second 
it positions the framework in which the forthcoming proof will all take 
place.
The proof of our main theorem occupies Section \ref{sec:proofs}.
Section \ref{sec:numeric} contains numerical simulations whose 
objectives are, first, to exemplify our results and, second, to show the 
behaviour of the random variables $X_\varepsilon$'s beyond their mean. 
Pictures of $2$-adic Kakeya sets (in dimension $2$ and $3$) are also 
included.

\section{Non-archimedean Kakeya sets}
\label{sec:results}

As just mentionned, the aim of this section is to introduce (random) 
Kakeya and Besikovitch sets over non-archimedean local fields (\emph{cf} 
\S\S \ref{ssec:definitions}--\ref{ssec:universe}) and then to state and 
comment on our main results (\S \ref{ssec:statement}).

Throughout this paper, the letter $K$ refers to a fixed discrete 
valuation field on which the valuation is denoted by $\val$. We always 
assume that $K$ is complete and that its residue field is finite. For 
our readers who are not familiar with non-archimedean geometry, we refer 
to Appendix~\ref{app:DVF} (page \pageref{app:DVF}) for basic definitions 
and basic facts about valuation fields.

We fix in addition an integer $d \geq 2$: the dimension.

\subsection{Besikovitch and Kakeya sets}
\label{ssec:definitions}

\subsubsection{The real setting}
\label{sssec:R}

We first recall the definition and the basic properties of Kakeya sets 
and Besikovitch sets in the classical euclidean setting over $\R$. Let 
$\Sp^{d-1}(\R)$ denote the unit sphere in $\R^d$.
When $d=2$, Kakeya considers subsets in $\R^2$ that can be obtained by 
rotating a needle of length $1$ continuously through $360$ degrees 
within it and returning to its original position (or, depending on 
authors, by rotation a needle of length $1$ continuously through $180$ 
degrees within it and to its original position with reverse 
orientation). This notion can be extended to higher dimensions as 
follows.

\begin{deftn}
A \emph{Kakeya needle set} (or just a \emph{Kakeya set}) in $\R^d$ is
a subset $N$ of $\R^d$ of the form:
$$N = \bigcup_{a \in \Sp^{d-1}(\R)} 
\Big[f(a){-}\frac a 2,\, f(a){+}\frac a 2\Big]$$
where $f : \Sp^{d-1}(\R) \to \R^d$ is a continuous function. (Here
$[x,y]$ denotes the segment joining the points $x$ and $y$.)
\end{deftn}

\begin{rem}
\label{rem:projKakeya}
Optionally one may further require that the segments corresponding to 
the directions $a$ and $-a$ coincide for all $a \in \Sp^{d-1}(\R)$. 
This is equivalent to requiring that $f(a) = f(-a)$ for all $a \in 
\Sp^{d-1}(\R)$, that is to requiring that $f$ factors through the
projective space $\P^{d-1}(R)$.
\end{rem}

The natural question about Kakeya sets is the following: how small can 
be a Kakeya set? As a basic example, Kakeya first asks whether there 
exists a minimal area for Kakeya sets in $\R^2$. Besikovitch answers 
this question negatively and proves that there exists Kakeya sets (in
any dimension) of arbitrary small measure.
Besikovitch introduced a weaker version of Kakeya sets:

\begin{deftn}
A \emph{Besikovitch set} in $\R^d$ is a subset of $\R^d$ which contains
a unit line segment in every direction.
\end{deftn}

\noindent
Obviously a Kakeya set is a Besikovitch set. The converse is however
not true. More precisely Besikovitch managed to construct Besikovitch
sets of measure zero whereas one can easily show that a Kakeya set 
have necessarily positive measuree. The question now becomes: how small can be a
Besikovitch set? A famous conjecture in this direction asks whether
any Besikovitch set in $\R^d$ has Hausdorff dimension $d$? It is known
to be true when $d \in \{1,2\}$ but the question remains open for
higher dimensions.

\subsubsection{The non-archimedean setting}

We now move to the non-archimedean setting: recall that we have fixed
a complete discrete valuation field $K$.
We denote by $R$ its rings of integers and by $k$ its residue field.
We set $q = \card k$.
We fix a uniformizer $\pi \in K$ and always assume that the valuation
on $K$ is normalized so that $\val(\pi) = 1$.
Let $\mu$ be the Haar measure on $K$ normalized by $\mu(R) = 1$. In the 
sequel, we shall always work with the norm $|{\cdot}|$ on $K$ defined 
by $|x| = q^{-\val(x)}$ ($x \in K$). We recall that it is compatible 
with the Haar measure $\mu$ on $K$ in the sense that:
$$\mu(aE) = |a| \cdot \mu(E)$$
for all $a \in K$ and all measurable subset $E$ of $K$.

We consider the $K$-vector space $K^d$ and endow it with the infinite norm 
$\Vert{\cdot}\Vert_\infty$:
$$\Vert(x_1, \ldots, x_d)\Vert_\infty = \max_{1 \leq i \leq d} |x_i|.$$
Let $\B^d(K)$ (resp. $\Sp^{d-1}(K)$) denote the unit ball (resp. the unit 
sphere) in $K^d$. Clearly $\B^d(K) = R^d$ and $\Sp^{d-1}(K)$ consists of
tuples $(x_1, \ldots, x_d) \in R^d$ containing at least one coordinate
which is invertible in $R$. The latter condition is equivalent to the
fact that the image of $(x_1, \ldots, x_d)$ in $k^d$ does not vanish.
This notably implies that $\Sp^{d-1}(K)$ has a large measure: precisely 
$\mu(\Sp^{d-1}(K)) = 1 - q^{-d}$. This contrasts with the real case.

\begin{deftn}
\label{def:Besikovitch}
Given $a \in \Sp^{d-1}(K)$, a \emph{unit length segment} of direction
$a$ is a subset of $K^d$ of the form $\big\{ t a + b : t\in R \big\}$
for some $b \in R^d$.

A \emph{Besikovitch set} in $K^d$ is a subset of $K^d$ containing a
unit length segment in every direction.
\end{deftn}

\begin{deftn}
A \emph{Kakeya set} in $K^d$ is a subset $N$ of $K^d$ of the form:
$$N = \bigcup_{a \in \Sp^{d-1}(K)} S_a
\quad \text{with} \quad
S_a = \big\{ t a + f(a) \::\: t\in R \big\}$$
where $f : \Sp^{d-1}(K) \to K^d$ is a continuous function.
\end{deftn}

It has been proved recently (see \cite{fraser}) that Kakeya sets of 
measure zero exists in $K^d$! The main objective of this article is to 
prove that it is in fact the case for almost all Kakeya sets (in a sense 
that we will make precise later).

\subsection{The projective space over $K$}
\label{ssec:projective}

Instead of working with $\Sp^{d-1}(K)$, it will be more convenient to use 
the projective space $\P^{d-1}(K)$. Recall that is defined as the set of 
lines in $K^d$ passing through the origin. From an algebraic point of 
view, $P^{d-1}(K)$ is described as the quotient of $K^{d+1} \backslash 
\{0\}$ by the natural action by multiplication of $K^\star$.
We use the standard notation $[a_1 : \cdots : a_d]$ to refer to the
class in $\P^{d-1}(K)$ of a nonzero $d$-tuple $(a_1, \ldots, a_d)$ of
elements of $K$. Geometrically $[a_1 : \cdots : a_d]$ corresponds to the 
line directed by the vector $(a_1, \ldots, a_d)$.

\begin{deftn}
Let $a \in \P^{d-1}(K)$.
A representative $(a_1, \ldots, a_d) \in K^d$ of $a$ is 
\emph{reduced} if it belongs to $\Sp^{d-1}(K)$.
\end{deftn}

Any element $a \in \P^{d-1}(K)$ admits a reduced representative: it can be 
obtained by dividing any representative $(a_1, \ldots, a_d)$ by a 
coordinate $a_i$ for which $\Vert (a_1, \ldots, a_d) \Vert_\infty = 
|a_i|$. We note that two reduced representatives of $a$ differ by 
multiplication by a scalar of norm $1$, \emph{i.e.} by an invertible 
element of $R$. As a consequence $\P^{d-1}(K)$ can alternatively be 
described as the quotient $\Sp^{d-1}(K)/R^\times$ where $R^\times$ stands 
for the group of invertible elements of $R$.

\paragraph{Canonical representatives.}

Although there is no canonical choice, we will need to define a 
particular set of representatives of the elements of $\P^{d-1}(K)$. The 
following lemma makes precise our convention.

\begin{lem}
Any element $a \in \P^{d-1}(K)$ admits a unique representative
$\can(a) = (a_1, \ldots, a_d) \in \Sp^{d-1}(K)$ satisfying the following 
property: there exists an index $\piv(a)$ (uniquely determined) such 
that $a_{\piv(a)} = 1$ and $|a_i| < 1$ for all $i < \piv(a)$.
\end{lem}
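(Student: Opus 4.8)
The plan is to establish existence and uniqueness separately, working with an arbitrary reduced representative and then normalizing it.

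\medskip

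\noindent\textbf{Existence.}
First I would pick any reduced representative $(b_1, \ldots, b_d) \in \Sp^{d-1}(K)$ of $a$, which exists by the discussion preceding the lemma. Since this tuple lies in $\Sp^{d-1}(K)$, we have $\max_i |b_i| = 1$, so the set of indices $i$ with $|b_i| = 1$ is nonempty; let $j$ be the smallest such index. Then $b_j \in R^\times$, so we may set $\can(a) = b_j^{-1} \cdot (b_1, \ldots, b_d)$. Multiplying by the unit $b_j^{-1}$ preserves all norms, so this is again a reduced representative; its $j$-th coordinate equals $1$; and for every $i < j$ we have $|b_i| < 1$ by minimality of $j$, hence the $i$-th coordinate of $\can(a)$ also has norm $<1$. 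So $\piv(a) := j$ works.

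\medskip

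\noindent\textbf{Uniqueness.}
Suppose $(a_1, \ldots, a_d)$ and $(a'_1, \ldots, a'_d)$ are two reduced representatives of $a$ with pivot indices $p$ and $p'$ respectively, satisfying the stated property. Two reduced representatives differ by an element of $R^\times$, so $(a'_1, \ldots, a'_d) = u \cdot (a_1, \ldots, a_d)$ for some $u \in R^\times$, in particular $|u| = 1$. Comparing the two candidate pivots: if $p < p'$ then $a'_p = u a_p = u$ has norm $1$, contradicting $|a'_p| < 1$ (which holds since $p < p'$); symmetrically $p' < p$ is impossible. Hence $p = p'$. Then $a'_p = u a_p$ forces $1 = u \cdot 1$, so $u = 1$ and the two representatives coincide. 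This simultaneously shows $\piv(a)$ is uniquely determined.

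\medskip

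\noindent I do not anticipate any real obstacle here: the only point requiring a little care is invoking the fact that two reduced representatives of the same projective point differ by a scalar in $R^\times$, which is exactly the observation recorded just above the statement of the lemma, so it may be used freely.
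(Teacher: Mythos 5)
Your proof is correct and follows essentially the same route as the paper: normalize an arbitrary reduced representative by its first unit coordinate for existence, and use the fact that reduced representatives differ by a scalar in $R^\times$ for uniqueness. You merely spell out the uniqueness argument, which the paper leaves to the reader, and your version of it is sound.
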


\begin{proof}
Let $(a'_1, \ldots, a'_d) \in \Sp^{d-1}(K)$ be any representative of $a$ 
of norm $1$. Define $j$ as the smallest index $i$ for which $|a'_i|
= 1$. Then the vector
$(a'_j)^{-1} \cdot (a'_1, \ldots, a'_d)$
satisfies the requirements of the lemma (with $\piv(a) = j$). 
The uniqueness is easy and left to the reader.
\end{proof}

\begin{rem}
The notation $\piv$ means ``pivot''.
\end{rem}

The above construction defines two mappings $\piv : \P^{d-1}(K) \to 
\{1,\ldots,d\}$ and $\can : \P^{d-1}(K) \to \Sp^{d-1}(K)$ and the latter 
is a section of the projection $\Sp^{d-1}(K) \to \P^{d-1}(K)$. In the
sequel, we shall often consider $\can$ as a function from $\P^{d-1}(K)$
to $R^d$.

\paragraph{A distance on $\P^{d-1}(K)$.}

Recall that we have seen that $\P^{d-1}(K) = \Sp^{d-1}(K)/R^\times$.
The natural distance on $\Sp^{d-1}(K)$ (inherited from that on $K^d$)
then defines a distance $\dist$ on $\P^{d-1}(K)$ by:
$$\dist(a,b) = \inf_{\hat a, \hat b} |\hat a - \hat b|$$
where the infimum is taken over all representatives $\hat a$ and 
$\hat b$ of $a$ and $b$ respectively \emph{lying in $\Sp^{d-1}(K)$}.
One easily proves that $\dist$ takes its values in the set $\{0, 1, 
q^{-1}, q^{-2}, q^{-3}, \ldots \}$ and remains non-archimedean in the sense 
that
$$\dist(a,c) \leq \max\big(\dist(a,b), \dist(b,c)\big)$$
for all $a, b, c \in \P^{d-1}(K)$. Moreover $\P^{d-1}(K)$ equipped
with the topology induced by $\dist$ is a compact space since there
is a continuous map $\Sp^{d-1}(K) \to \P^{d-1}(K)$ with compact
domain.

\begin{prop}
\label{prop:distproj}
For all $a,b \in \P^{d-1}(K)$, we have:
$$\dist(a,b) = |\can(a) - \can(b)|.$$
\end{prop}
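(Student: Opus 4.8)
The plan is to establish the two inequalities separately. One direction is immediate: since $\can(a)$ and $\can(b)$ are, by construction, representatives of $a$ and $b$ lying in $\Sp^{d-1}(K)$, the very definition of $\dist$ as an infimum gives $\dist(a,b) \leq |\can(a) - \can(b)|$.

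For the reverse inequality I would fix \emph{arbitrary} reduced representatives $\hat a$ and $\hat b$ of $a$ and $b$ and show that $|\can(a) - \can(b)| \leq |\hat a - \hat b|$; passing to the infimum then yields $|\can(a) - \can(b)| \leq \dist(a,b)$. Write $\hat a = u \cdot \can(a)$ and $\hat b = v \cdot \can(b)$ with $u, v \in R^\times$, which is legitimate because any two reduced representatives of the same point differ by a unit of $R$. Up to swapping $a$ and $b$, assume $\piv(a) \leq \piv(b)$ and set $i = \piv(a)$, so that $\can(a)_i = 1$. I would then split into two cases according to whether the pivots coincide.

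If $\piv(a) = \piv(b) = i$, then also $\can(b)_i = 1$, so the $i$-th coordinates of $\hat a$ and $\hat b$ are $u$ and $v$, whence $|\hat a - \hat b| \geq |u - v|$. Combining this with the identity $\can(a) - \can(b) = u^{-1}\big[(\hat a - \hat b) + (v - u)\can(b)\big]$ and the fact that $|u| = |\can(b)| = 1$, the ultrametric inequality gives $|\can(a) - \can(b)| \leq \max\big(|\hat a - \hat b|,\, |u-v|\big) = |\hat a - \hat b|$. If instead $\piv(a) < \piv(b)$, then $|\can(b)_i| < 1$ by the defining property of $\can$, so $|\can(a)_i - \can(b)_i| = 1$; since both vectors lie in $R^d$ this forces $|\can(a) - \can(b)| = 1$, and the same coordinate computation applied to $\hat a$ (whose $i$-th entry $u$ is a unit) and $\hat b$ (whose $i$-th entry $v\can(b)_i$ has norm $<1$) gives $|\hat a - \hat b| = 1$ as well. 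In both cases $|\can(a) - \can(b)| \leq |\hat a - \hat b|$, as wanted.

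The argument is entirely elementary; the only point requiring a little care is the case of equal pivots, where one must notice that the \emph{a priori} harmless-looking term $(v-u)\can(b)$ in the ultrametric estimate is controlled precisely by the lower bound $|\hat a - \hat b| \geq |u-v|$ coming from the pivot coordinate — without this observation the estimate would only give an inequality up to an error of size $|u-v|$.
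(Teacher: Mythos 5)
Your proof is correct and follows essentially the same route as the paper: the easy inequality from the definition of $\dist$ as an infimum, then the reverse inequality by writing arbitrary reduced representatives as $u\cdot\can(a)$, $v\cdot\can(b)$ with $u,v\in R^\times$ and splitting on whether the pivots coincide, with the pivot coordinate controlling $|u-v|$. The only difference is cosmetic: in the equal-pivot case you package the estimate into the single identity $\can(a)-\can(b)=u^{-1}\big[(\hat a-\hat b)+(v-u)\can(b)\big]$ plus one ultrametric bound, where the paper instead distinguishes the sub-cases $|u-v|\geq r$ and $|u-v|<r$ and works coordinate-wise — your version is a touch more streamlined but rests on the same key observation.
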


\begin{proof}
Clearly $\dist(a,b) \leq |\can(a) - \can(b)|$.

Hence, we just need to prove that $|\can(a) - \can(b)| \leq 
\dist(a,b)$. Let us first assume that $\piv(a) < \piv(b)$ and let $\hat 
a = (\hat a_1, \ldots, \hat a_d)$ and $\hat b = (\hat b_1, \ldots, \hat 
b_d)$ be two vectors in $\Sp^{d-1}(K)$ lifting $a$ and $b$ 
respectively. Set $j = \piv(a)$. The coordinate $\hat a_j$ has 
necessarily norm $1$ while $|\hat b_j| < 1$. Therefore $|\hat a_j-\hat 
b_j|$ has norm $1$ and $\dist(a,b)$ is equal to $1$ as well.
We conclude similarly when $\piv(a) > \piv(b)$.

Assume now that $\piv(a) = \piv(b)$. Set $j = \piv(a)$ and write
$\can(a) = (a_1, \ldots, a_d)$ and $\can(b) = (b_1, \ldots, b_d)$,
so that $a_j = b_j = 1$.
We notice that any representative $\hat a \in \Sp^{d-1}(K)$ of $a$ 
can be written $\hat a = \lambda \cdot \can(a)$ for some $\lambda
\in R^\times$. Similarly we can write $\hat b = \mu \cdot \can(b)$ 
with $\mu \in R^\times$ for any representative $\hat b$ of $b$. We
are then reduced to show that:
\begin{equation}
\label{eq:minorcan}
|\lambda \cdot \can(a) - \mu \cdot \can(b)|
\geq |\can(a) - \can(b)|
\end{equation}
for any $\lambda$ and $\mu$ of norm $1$. Set $r = |\can(a) - \can(b)|$.
Observe that the $j$-th 
coordinate of the vector $\lambda \cdot \can(a) - \mu \cdot \can(b)$
is $\lambda - \mu$. The inequality~\eqref{eq:minorcan} then holds if
$|\lambda - \mu| \geq r$. Otherwise, let $j'$ be an index such that 
$r = |a_{j'} - b_{j'}|$. For this particular $j'$, write
$\lambda a_{j'} - \mu b_{j'}
= \lambda (a_{j'} - b_{j'}) + (\lambda - \mu) b_{j'}$.
Moreover $|\lambda (a_{j'} - b_{j'})| = r$ while 
$|(\lambda - \mu) b_{j'}| \leq |\lambda - \mu| < r$. 
Thus $|\lambda a_{j'} - \mu b_{j'}| = r$ and \eqref{eq:minorcan}
follows.
\end{proof}

\begin{cor}
\label{cor:distproj}
Let $a,b \in \P^{d-1}(K)$. 
Let $(a_1, \ldots, a_d)$ and $(b_1, \ldots, b_d)$ in $\Sp^{d-1}(K)$ 
be some representatives of $a$ and $b$ respectively. Then $\dist(a,b)$ 
is the maximal norm of a $2 \times 2$ minor of the matrix
\begin{equation}
\label{eq:Mab}
\left( \begin{matrix}
a_1 & a_2 & \cdots & a_d \\
b_1 & b_2 & \cdots & b_d 
\end{matrix} \right).
\end{equation}
\end{cor}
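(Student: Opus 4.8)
The plan is to reduce the statement to the previous proposition by choosing convenient representatives and then tracking how the maximal minor transforms under the allowed rescalings. First I would recall that by Proposition~\ref{prop:distproj} we have $\dist(a,b) = |\can(a) - \can(b)|$, so it suffices to show that the maximal norm of a $2\times 2$ minor of the matrix \eqref{eq:Mab} equals $|\can(a) - \can(b)|$, and moreover that this quantity is independent of the choice of reduced representatives $(a_1,\dots,a_d)$ and $(b_1,\dots,b_d)$ — which is immediate, since replacing $(a_i)$ by $\lambda (a_i)$ and $(b_i)$ by $\mu (b_i)$ with $\lambda,\mu \in R^\times$ multiplies every $2\times 2$ minor $a_i b_j - a_j b_i$ by $\lambda\mu$, a unit, hence leaves all the norms unchanged. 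So without loss of generality I may take the representatives to be $\can(a)$ and $\can(b)$.

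Next I would split into the two cases already used in the proof of Proposition~\ref{prop:distproj}. If $\piv(a) \neq \piv(b)$, say $\piv(a) = j < j' = \piv(b)$, then in the $\can$-normalization we have $a_j = 1$, $|b_j| < 1$, and $b_{j'} = 1$, $|a_{j'}| < 1$; the minor formed from columns $j$ and $j'$ is $a_j b_{j'} - a_{j'} b_j = 1 - a_{j'} b_j$, which has norm $1$ since $|a_{j'} b_j| < 1$. As no minor can have norm exceeding $\max_i |a_i| \cdot \max_i |b_i| = 1$ (each entry has norm at most $1$, the pivot entries having norm exactly $1$), the maximal minor norm is exactly $1$, which matches $\dist(a,b) = 1$ from Proposition~\ref{prop:distproj}.

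In the remaining case $\piv(a) = \piv(b) = j$, we have $a_j = b_j = 1$, so the minor from columns $i$ and $j$ equals $a_i b_j - a_j b_i = a_i - b_i$. Taking the maximum over $i$ gives $\max_i |a_i - b_i| = |\can(a) - \can(b)| = \dist(a,b)$, so the maximal minor norm is at least $\dist(a,b)$. For the reverse inequality I would bound an arbitrary minor: for columns $i, i'$ write $a_i b_{i'} - a_{i'} b_i = a_i(b_{i'} - a_{i'}) + a_{i'}(a_i - b_i)$ (using $a_i b_{i'} - a_{i'} b_i = a_i b_{i'} - a_i a_{i'} + a_i a_{i'} - a_{i'} b_i$), whence $|a_i b_{i'} - a_{i'} b_i| \leq \max\big(|a_i|\,|b_{i'} - a_{i'}|,\ |a_{i'}|\,|a_i - b_i|\big) \leq |\can(a) - \can(b)| = \dist(a,b)$ since all entries have norm $\leq 1$. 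Combining the two inequalities closes this case and hence the corollary.

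I do not expect a genuine obstacle here; the only mild subtlety is making sure the reduction to $\can$ is legitimate (the invariance of minor norms under unit rescaling) and handling the mixed-pivot case cleanly, since there the relevant minor is not simply a difference of coordinates. Both points are short, so the corollary really is a direct computational consequence of Proposition~\ref{prop:distproj}.
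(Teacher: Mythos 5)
Your proof is correct and follows essentially the same route as the paper's: reduce to the canonical representatives (unit rescaling multiplies every minor by a unit), split according to whether $\piv(a)=\piv(b)$, and in the equal-pivot case identify the maximal minor norm with $\Vert\can(a)-\can(b)\Vert_\infty$ --- your identity $a_i b_{i'} - a_{i'} b_i = a_i(b_{i'}-a_{i'}) + a_{i'}(a_i-b_i)$ is just the paper's row reduction written entrywise. One small slip worth noting: in the mixed-pivot case with $\piv(a)=j<j'=\piv(b)$, your claim $|a_{j'}|<1$ need not hold (the canonical representative only forces $|a_i|<1$ for $i$ \emph{before} the pivot), but your computation only uses $|a_{j'}b_j|\leq|b_j|<1$, so the conclusion is unaffected.
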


\begin{proof}
Since two representatives of $a$ differ by multiplication by an
element of norm $1$, we may safely assume that $(a_1, \ldots, a_d)
= \can(a)$. Similarly we assume that $(b_1, \ldots, b_d) = \can(b)$.
If $\piv(a) \neq \piv(b)$, the determinant of the submatrix 
of~\eqref{eq:Mab} composed by the $\piv(a)$-th and $\piv(b)$-th columns 
is congruent to $\pm 1$ modulo $\m$. It thus has norm $1$ and the
corollary is proved in this case.
Suppose now that $\piv(a) = \piv(b)$ and assume further for simplicity 
that they are equal to $1$. The matrix~\eqref{eq:Mab} is then equivalent 
to:
$$\left( \begin{matrix}
1 & a_2 & \cdots & a_d \\
0 & b_2-a_2 & \cdots & b_d-a_d 
\end{matrix} \right).$$
It is now clear that the maximal norm of a $2 \times
2$ minor is equal to $\Vert\can(b) - \can(a)\Vert_\infty$. The
corollary then follows from Proposition \ref{prop:distproj}.
\end{proof}

\paragraph{Projective Kakeya sets.}

Following Remark~\ref{rem:projKakeya}, one may define non-archimedean 
Kakeya sets using the projective space instead of the sphere.

\begin{deftn}
A \emph{projective Kakeya set} in $K^d$ is a subset $N$ of $K^d$ of the 
form:
$$N = \bigcup_{a \in \P^{d-1}(K)} S_a
\quad \text{with} \quad
S_a = \big\{ t \cdot \can(a) + f(a) \::\: t\in R \big\}$$
where $f : \P^{d-1}(K) \to K^d$ is a continuous function.
\end{deftn}

\begin{prop}
\begin{enumerate}[(a)]
\setlength\itemsep{0pt}
\item Any projective Kakeya set is a Kakeya set.
\item Any Kakeya set contains a projective Kakeya set.
\end{enumerate}
\end{prop}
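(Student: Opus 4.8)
The plan is to prove the two assertions more or less in order, the crux being the relationship between the sphere $\Sp^{d-1}(K)$ and the projective space $\P^{d-1}(K)$ via the two maps $\can$ and the canonical projection $\rho : \Sp^{d-1}(K) \to \P^{d-1}(K)$. The key geometric fact, already available to us, is Proposition~\ref{prop:distproj}: $\dist(a,b) = |\can(a) - \can(b)|$, which says precisely that $\can : \P^{d-1}(K) \to \Sp^{d-1}(K)$ is an isometric section of $\rho$ (with $\P^{d-1}(K)$ carrying $\dist$ and $\Sp^{d-1}(K)$ its metric from $K^d$). In particular $\can$ is continuous, and $\rho$ is continuous as well (it is $1$-Lipschitz, since passing to a quotient by $R^\times$ can only decrease distances). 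These two observations do essentially all the work.

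For part (a), let $N = \bigcup_{a \in \P^{d-1}(K)} S_a$ with $S_a = \{t\cdot\can(a) + f(a) : t \in R\}$ and $f : \P^{d-1}(K) \to K^d$ continuous be a projective Kakeya set. I would define $g : \Sp^{d-1}(K) \to K^d$ by $g = f \circ \rho$, which is continuous as a composite of continuous maps. For $v \in \Sp^{d-1}(K)$, write $a = \rho(v) \in \P^{d-1}(K)$; then $v$ and $\can(a)$ are two reduced representatives of the same projective point, so they differ by a unit: $v = \lambda\,\can(a)$ with $\lambda \in R^\times$. Hence the segment $\{t v + g(v) : t \in R\} = \{t\lambda\,\can(a) + f(a) : t \in R\} = \{s\,\can(a) + f(a) : s \in R\} = S_a$, using that multiplication by $\lambda \in R^\times$ is a bijection of $R$. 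Therefore $\bigcup_{v \in \Sp^{d-1}(K)} \{t v + g(v) : t \in R\} = \bigcup_{a \in \P^{d-1}(K)} S_a = N$, exhibiting $N$ as an ordinary Kakeya set with directing function $g$.

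For part (b), let $N = \bigcup_{a \in \Sp^{d-1}(K)} S_a$ with $S_a = \{t a + f(a) : t \in R\}$ and $f : \Sp^{d-1}(K) \to K^d$ continuous be a Kakeya set. I would set $h = f \circ \can : \P^{d-1}(K) \to K^d$, continuous since $\can$ is continuous by Proposition~\ref{prop:distproj}. Define the projective Kakeya set $N' = \bigcup_{b \in \P^{d-1}(K)} \{t\,\can(b) + h(b) : t \in R\}$. For each $b \in \P^{d-1}(K)$, the reduced representative $\can(b)$ is a point $v \in \Sp^{d-1}(K)$, and $\{t\,\can(b) + h(b) : t \in R\} = \{t v + f(v) : t \in R\} = S_v \subseteq N$. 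Taking the union over $b$ gives $N' \subseteq N$, so $N$ contains the projective Kakeya set $N'$.

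\textbf{Main obstacle.} There is no serious obstacle here; the whole content is bookkeeping around the identity $\can \circ \rho = \mathrm{id}$ on directions being replaced, at the level of segments, by the fact that scaling a reduced representative by a unit in $R^\times$ permutes the parametrization $t \in R$ without changing the segment. The only point requiring a modicum of care is the continuity of the auxiliary functions $g$ and $h$: for $h = f \circ \can$ this is exactly where Proposition~\ref{prop:distproj} is used (it guarantees $\can$ is an isometry, hence continuous), while for $g = f \circ \rho$ one needs that the projection $\Sp^{d-1}(K) \to \P^{d-1}(K)$ is continuous, which follows immediately from the definition of $\dist$ as an infimum over representatives. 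Everything else is a direct set-theoretic manipulation.
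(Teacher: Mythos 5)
Your proposal is correct and follows essentially the same route as the paper: part (a) composes $f$ with the canonical projection $\Sp^{d-1}(K)\to\P^{d-1}(K)$, and part (b) composes $f$ with $\can$, whose continuity comes from Proposition~\ref{prop:distproj}. You merely spell out the detail (segments unchanged under scaling a reduced representative by a unit of $R^\times$) that the paper leaves implicit.
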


\begin{proof}
\emph{(a)}~The projective Kakeya set attached to a function
$f : \P^{d-1}(K) \to K^d$ is equal to the Kakeya set attached to the
compositum of $f$ with the natural map $\Sp^{d-1}(K) \to \P^{d-1}(K)$
sending a vector to the line it generates.

\smallskip

\noindent
\emph{(b)}~The Kakeya set attached to a function $f : \Sp^{d-1}(K) \to K^d$ 
contains the projective Kakeya set attached to $f \circ \can$. (Notice
that $\can$ is continuous by Proposition~\ref{prop:distproj}.)
\end{proof}

In what follows, we will mostly work with projective Kakeya sets.

\subsection{The universe}
\label{ssec:universe}

To each continuous function $f : \P^{d-1}(K) \to K^d$, we attach the
(projective) Kakeya set $N(f)$ defined by:
$$N(f) = \bigcup_{a \in \P^{d-1}(K)} S_a(f)
\quad \text{with} \quad
S_a(f) = \big\{ t \cdot \can(a) + f(a) \::\: t\in R \big\}.$$
Observe that $N(f)$ is compact. Indeed it appears as the image of 
the compact space $\P^{d-1}(K) \times R$ under the continuous mapping 
$(a,t) \mapsto t \cdot \can(a) + f(a)$. In particular, it is closed in
$K^d$.

We would like to define random Kakeya sets, that is to turn $N$ into a 
random variable on a certain probability space $\Omega$. Of course, the 
whole set $C^0(\P^{d-1}(K), K^d)$ of all continuous functions 
$\P^{d-1}(K) \to K^d$ cannot be endowed with a nice probability measure 
because $K$ itself cannot. We then need to restrict the codomain and a 
second natural candidate for $\Omega$ is then $C^0(\P^{d-1}(K), R^d)$. 
Unfortunately, we were not able to find a reasonable definition of a 
probability measure on it\footnote{By the way, it would be interesting to 
define a nice probability measure on $C^0(R, R^d)$ and then investigate 
what could be the non-archimedean analogue of the Brownian motion.}. 
Nevertheless our intuition is that $C^0(\P^{d-1}(K), R^d)$ would be in 
any case too large to be relevant for the application we have in mind; 
indeed, we believe that any reasonable probability measure on it (if it 
exists) would eventually lead to $N(f) = R^d$ almost surely.

Instead, we propose to define $\Omega$ as the set of $1$-Lipschitz 
functions from $\P^{d-1}(K)$ to $R^d$. The addition on $R^d$ turns 
$\Omega$ into a commutative group. We endow $\Omega$ with the infinite 
norm $\Vert \cdot \Vert_\infty$ defined by the usual formula:
$$\Vert f \Vert_\infty = \sup_{a \in \P^{d-1}(K)} \Vert f(a)\Vert_\infty
\quad (f \in \Omega).$$
The induced topology is then the topology of uniform convergence. 
The Arzelà--Ascoli theorem implies that $\Omega$ is compact. It is thus
endowed with its Haar measure, which is a probability measure.

\begin{rem}
More generally, one could also have considered $r$-Lipschitz functions 
$\P^{d-1}(K) \to R^d$ for some positive \emph{fixed} real number $r$.
This would actually lead to similar qualitative behaviours (although of 
course precise numerical values would differ). Moreover the technics 
introduced in this paper extends more or less easily to the general case 
--- and the reader is invited to write it down as an exercise! 
We have chosen to restrict ourselves to the case $r=1$ in order to avoid
many technicalities and be able to focus on the heart of the argumentation.
\end{rem}

In the rest of this paragraph (which can be skipped on first reading), 
we give a more explicit description of the universe $\Omega$ as a 
probability space. We fix a complete set of representatives of classes 
modulo $\m$ and call it $S$. We denote by $S_n$ the set of elements that 
can be written as $$s_0 + s_1 \pi + s_2 \pi^2 + \cdots + s_{n-1} 
\pi^{n-1}$$ where the $s_i$'s lie in $S$ and we recall that $\pi \in R$ 
denotes a fixed uniformizer of $K$. Then $S_n$ forms a complete set of 
representatives of classes modulo $\m^n$. Observe in particular that 
$S_1 = S$.

\noindent
We now introduce special ``step functions'' that will be useful for 
approximating functions in $\Omega$.

\begin{deftn}
For a positive integer $n$, let $\Omega^\an_n$ denote the subset of 
$\Omega$ consisting of functions taking their values in $S_n$ and which 
are constant of each closed ball of radius $q^{-n}$.
\end{deftn}

\begin{rem}
The exponent ``$\an$'' refers to ``analytic'' and recalls that we
are here giving an analytic description of $\Omega$. Later on, in \S
\ref{ssec:universealg}, we will revisit the constructions of this 
subsection in a more algebraic fashion and notably define an algebraic
version of $\Omega^\an_n$.
\end{rem}

Note that $\Omega^\an_n \subset \Omega^\an_m$ as soon as $n \leq m$. 
Moreover $\Omega^\an_n$ is a finite set. Indeed $S_n$ is finite and the 
set of closed balls of radius $q^{-n}$ is in bijection with 
$\P^{d-1}(S_n)$ and thus is finite as well.

\begin{prop}
\label{prop:approx}
Given $n \geq 1$ and $f \in \Omega$
there exists a unique function $\psi^\an_n(f) \in \Omega^\an_n$ such that:
$$\Vert f - \psi^\an_n (f) \Vert_\infty \leq q^{-n}.$$
\end{prop}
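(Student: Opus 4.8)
The plan is to reduce everything to a "local" problem on each closed ball of radius $q^{-n}$ in $\P^{d-1}(K)$, exhibit the candidate $\psi^\an_n(f)$ explicitly, and then verify uniqueness using the non-archimedean triangle inequality together with the fact that distinct elements of $S_n$ are never $\leq q^{-n}$ apart. First I would recall that the closed balls of radius $q^{-n}$ in $\P^{d-1}(K)$ partition the space into finitely many clopen pieces, indexed by $\P^{d-1}(S_n)$; this uses the description of $\dist$ via $\can$ from Proposition~\ref{prop:distproj} (or equivalently Corollary~\ref{cor:distproj}). On each such ball $B$, since $f$ is $1$-Lipschitz, the image $f(B) \subset R^d$ has diameter at most $q^{-n}$; hence all values of $f$ on $B$ lie in a single coset of $\m^n \cdot R^d$ inside $R^d$. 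Picking any $a_0 \in B$, there is a \emph{unique} element $s_B \in S_n^d$ (coordinate-wise) congruent to $f(a_0)$ modulo $\m^n$, because $S_n$ is by construction a complete set of representatives mod $\m^n$. I would then \emph{define} $\psi^\an_n(f)$ to take the constant value $s_B$ on $B$, for each ball $B$.

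Next I would check that this $\psi^\an_n(f)$ genuinely lies in $\Omega^\an_n$: by construction it takes values in $S_n$ and is constant on each closed ball of radius $q^{-n}$, which is exactly the defining property. (Strictly speaking one should observe that a function constant on each such ball is automatically $1$-Lipschitz — indeed it is even locally constant, hence continuous, and any two points in different balls are at $\dist \geq q^{-n+1}$ apart while the codomain has diameter $\leq 1 = q^0$, wait, more carefully: two points at distance $q^{-j}$ with $j \leq n-1$ can have images up to distance $1$, but we only need distance $\leq q^{-j} \cdot q^{?}$... it is cleaner to note the values are in $S_n \subset R^d$ so any two images are within distance $1$, and $1 \leq q^j$ for all the relevant $j \geq 0$ — but for $j$ large, i.e. points in the same ball, the images coincide, giving distance $0 \leq q^{-j}$; so the Lipschitz bound holds trivially and $\psi^\an_n(f) \in \Omega$.) For the estimate $\Vert f - \psi^\an_n(f)\Vert_\infty \leq q^{-n}$: fix $a \in \P^{d-1}(K)$ lying in ball $B$; then $f(a) \equiv f(a_0) \equiv s_B \pmod{\m^n}$ since $|f(a) - f(a_0)| \leq \dist(a,a_0) \cdot 1 \leq q^{-n}$ by the $1$-Lipschitz property and $f(a_0) \equiv s_B$ by choice of $s_B$, so $\Vert f(a) - \psi^\an_n(f)(a)\Vert_\infty = \Vert f(a) - s_B\Vert_\infty \leq q^{-n}$; taking the sup over $a$ gives the claim.

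Finally, uniqueness: suppose $g \in \Omega^\an_n$ also satisfies $\Vert f - g\Vert_\infty \leq q^{-n}$. Then for every $a$, $\Vert \psi^\an_n(f)(a) - g(a)\Vert_\infty \leq \max\big(\Vert f(a) - \psi^\an_n(f)(a)\Vert_\infty,\ \Vert f(a) - g(a)\Vert_\infty\big) \leq q^{-n}$ by the ultrametric inequality. But $\psi^\an_n(f)(a)$ and $g(a)$ both lie in $S_n^d$, and two coordinates drawn from $S_n$ that are congruent modulo $\m^n$ must be equal (by the defining property of a complete set of representatives); since $\Vert \psi^\an_n(f)(a) - g(a)\Vert_\infty \leq q^{-n}$ forces each coordinate-difference to lie in $\m^n$, we get $\psi^\an_n(f)(a) = g(a)$. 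As $a$ was arbitrary, $g = \psi^\an_n(f)$. I do not expect any real obstacle here; the only point requiring a little care is the purely bookkeeping verification that functions constant on balls of radius $q^{-n}$ do lie in $\Omega$ (are $1$-Lipschitz), and the translation between "distance $\leq q^{-n}$ in $\P^{d-1}(K)$" and "same ball of radius $q^{-n}$", both of which follow immediately from Proposition~\ref{prop:distproj} and the ultrametric nature of $\dist$.
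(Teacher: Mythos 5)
Your construction and your uniqueness argument are essentially the paper's: you replace $f(a)$ by the unique element of $S_n^d$ congruent to it modulo $\m^n$ (your ball-by-ball formulation is the same thing, and it is well defined precisely because $f$ is $1$-Lipschitz), you get $\Vert f-\psi^\an_n(f)\Vert_\infty\leq q^{-n}$ directly from the choice of representatives, and uniqueness follows from the ultrametric inequality together with the fact that two distinct elements of $S_n$ are never congruent modulo $\m^n$. All of that is correct and matches the paper.

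There is, however, a genuine gap in your verification that $\psi^\an_n(f)\in\Omega^\an_n$. Since $\Omega^\an_n$ is by definition a subset of $\Omega$, you must check that $\psi^\an_n(f)$ is $1$-Lipschitz, and your claim that a function constant on each closed ball of radius $q^{-n}$ with values in $S_n$ is \emph{automatically} $1$-Lipschitz is false: take $n\geq 2$ and points $a,b$ with $\dist(a,b)=q^{-1}$; they lie in distinct balls of radius $q^{-n}$, and a ball-constant $S_n$-valued function may send them to $0$ and $1$ respectively, violating $\Vert g(a)-g(b)\Vert_\infty\leq q^{-1}$. The inequality you invoke (``$1\leq q^j$'') points the wrong way: the Lipschitz condition at points with $\dist(a,b)=q^{-j}$, $1\leq j\leq n-1$, demands $\Vert g(a)-g(b)\Vert_\infty\leq q^{-j}$, not $\leq q^{j}$. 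The Lipschitz property of $\psi^\an_n(f)$ is not a feature of ball-constant functions in general; it has to be inherited from $f$. The correct argument — the one the paper gives — is: if $\dist(a,b)\leq q^{-n}$ the two images coincide; otherwise $\dist(a,b)>q^{-n}$ and
$$\Vert \psi^\an_n(f)(a)-\psi^\an_n(f)(b)\Vert_\infty \leq \max\big(\Vert \psi^\an_n(f)(a)-f(a)\Vert_\infty,\ \Vert f(a)-f(b)\Vert_\infty,\ \Vert f(b)-\psi^\an_n(f)(b)\Vert_\infty\big) \leq \dist(a,b),$$
the outer terms being $\leq q^{-n}<\dist(a,b)$ and the middle one $\leq\dist(a,b)$ because $f$ is $1$-Lipschitz. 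With this replacement your proof is complete and coincides with the paper's.
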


\begin{proof}
Let $a \in \P^{d-1}(K)$. Set $f(a) = (x_1, \ldots , x_d)$ where the 
$x_i$'s lie in $R$. For any $i$, let $y_i$ be the
unique element of $S_n$ which is congruent to $x_i$ modulo $\m^n$. 
We define $\psi^\an_n(f)(a) = (y_1, \ldots, y_d)$.
Remembering that $\Vert x - y \Vert_\infty \leq q^{-n}$ (with $x, y \in R^d$)
if and only if $x$ and $y$ are congruent modulo
$\m^n$ coordinate-wise, we deduce that $\psi^\an_n(f)(a)$ is the unique element 
of $S_n$ with the property
that:
$$\Vert f (a) - \psi^\an_n (f )(a) \Vert_\infty \leq q^{-n}.$$
This construction then defines a function $\psi^\an_n(f ) : \P^{d-1} (K) \to S_n$
such that $\Vert f - \psi^\an_n(f ) \Vert_\infty \leq q^{-n}$.
and we have shown in addition that $\psi^\an_n(f)$ is the unique function 
satisfying the above condition.

It then remains to prove that $\psi^\an_n (f) \in \Omega^\an_n$, \emph{i.e.} 
that (1)~$\psi^\an_n(f)$ is constant on each closed ball of radius $q^{-n}$ 
and (2) is $1$-Lipschitz. Let us first prove (1). Let $a, b \in \P^{d-1} 
(K)$ such that $\dist(a, b) \leq q^{-n}$. By the Lipschitz condition, we 
get $\Vert f (a) - f (b) \Vert_\infty \leq q^{-n}$ as well. In other 
words, $f(a)$ and $f(b)$ are congruent modulo $\m^n$ coordinate-wise. By 
construction of $\psi^\an_n(f)$, we then derive that $\psi^\an_n(f)(a) = 
\psi^\an_n(f)(b)$ and (1)~is proved.

We now move to~(2). Pick $a, b \in \P^{d-1} (K)$. If $\dist(a, b) \leq
q^{-n}$, then we have just seen that
$\psi^\an_n(a) = \psi^\an_n(b)$. Consequently we clearly have 
$\Vert \psi^\an_n(a) - \psi^\an_n(b) \Vert_\infty \leq \dist(a, b)$. Otherwise, we
can write:
$$\Vert \psi^\an_n (a) - \psi^\an_n (b) \Vert_\infty
\leq \max \big(
\Vert \psi^\an_n (a) - f (a) \Vert_\infty,
\Vert f (a) - f (b) \Vert_\infty,
\Vert \psi^\an_n (b) - f (b) \Vert_\infty \big).$$
Now remark that $\Vert \psi^\an_n (a) - f (a) \Vert_\infty$ and 
$\Vert \psi^\an_n (b) - f (b)\Vert_\infty$ are both not greater than $q^{-n}$ by
construction. They are then \emph{a fortiori} both less than $\dist(a, b)$
by assumption. Moreover since $f$ is $1$-Lipschitz, we have $\Vert 
f (a) - f (b) \Vert_\infty \leq \dist(a, b)$. Putting all together we 
finally derive $\Vert \psi^\an_n(a) - \psi^\an_n(b) \Vert_\infty \leq
\dist(a, b)$ as wanted.
\end{proof}

Proposition \ref{prop:approx} just above shows that the union of all 
$\Omega^\an_n$ are dense in $\Omega$. Moreover, there is a projection 
$\psi^\an_n : \Omega \to \Omega^\an_n$ for any $n\geq 1$. For $m\geq n$, let 
$\psi^\an_{m,n} : \Omega^\an_m \to \Omega^\an_n$ denote the restriction of 
$\psi^\an_n$ to $\Omega^\an_m$.

\begin{prop}
\label{prop:fibrepsi}
Let $n$ be a positive integer and $f_n \in \Omega^\an_n$. 
The fibre of $\psi^\an_{n+1,n}$ over $f_n$ consists
exactly of functions of the shape:
$$f_n + \pi^n g$$
where $g : \P^{d-1} (K) \to S_1^d$ is any function which is constant on 
each closed ball of radius $q^{-(n+1)}$.
\end{prop}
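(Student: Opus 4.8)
The plan is to characterise membership in the fibre $(\psi^\an_{n+1,n})^{-1}(f_n)$ directly from the defining property of $\psi^\an_n$ given by Proposition~\ref{prop:approx}. Write $f_n \in \Omega^\an_n$ and let $h \in \Omega^\an_{n+1}$ be an arbitrary element of the fibre, so $\psi^\an_n(h) = f_n$. By Proposition~\ref{prop:approx} applied to $h$, this is equivalent to $\Vert h - f_n \Vert_\infty \leq q^{-n}$, i.e. $h$ and $f_n$ are congruent modulo $\m^n$ coordinate-wise at every point $a \in \P^{d-1}(K)$. Setting $g = \pi^{-n}(h - f_n)$, this congruence says exactly that $g(a) \in R^d$ for all $a$; combined with the fact that $h$ takes values in $S_{n+1}$ and $f_n$ in $S_n \subseteq S_{n+1}$, a short computation with the standard-representative expansions shows $g$ takes values in $S_1^d$ (the ``digit'' at level $\pi^n$).

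First I would verify the "only if" direction: if $h = f_n + \pi^n g$ with $g$ valued in $S_1^d$ and constant on closed balls of radius $q^{-(n+1)}$, then $h$ is valued in $S_{n+1}^d$ (adding the degree-$n$ digit to an element of $S_n$ produces an element of $S_{n+1}$, using that the $S_m$ are nested sets of truncated expansions), $h$ is constant on closed balls of radius $q^{-(n+1)}$ (both $f_n$ — being constant already on the larger balls of radius $q^{-n}$ — and $\pi^n g$ are), and $h$ is $1$-Lipschitz: for $a,b$ with $\dist(a,b) \le q^{-(n+1)}$ both $f_n$ and $g$ are locally constant at the relevant scale so $h(a)=h(b)$, while for $\dist(a,b) = q^{-j}$ with $j \le n$ we have $\Vert h(a)-h(b)\Vert_\infty = \Vert f_n(a)-f_n(b)\Vert_\infty \le q^{-j} = \dist(a,b)$ since $\pi^n g$ contributes only terms of norm $\le q^{-n} \le q^{-j}$ which cannot increase the max. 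Hence $h \in \Omega^\an_{n+1}$, and $\Vert h - f_n \Vert_\infty \le q^{-n}$ forces $\psi^\an_{n+1,n}(h) = f_n$ by the uniqueness in Proposition~\ref{prop:approx}.

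For the "if" direction, conversely, given $h \in \Omega^\an_{n+1}$ with $\psi^\an_n(h) = f_n$, I extract $g = \pi^{-n}(h - f_n)$ and must check $g$ has the stated form. That $g$ is valued in $S_1^d$ follows from uniqueness of $\pi$-adic expansions: at each point $a$, write $h(a)$ and $f_n(a)$ in their $S_{n+1}$- and $S_n$-expansions; the congruence $h(a) \equiv f_n(a) \pmod{\m^n}$ means their digits agree below level $n$, so $h(a) - f_n(a)$ is $\pi^n$ times the level-$n$ digit of $h(a)$, which lies in $S_1^d$. That $g$ is constant on closed balls of radius $q^{-(n+1)}$ is immediate since both $h$ and $f_n$ are.

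The main obstacle — though it is more a bookkeeping nuisance than a genuine difficulty — is being careful that the set $S_1^d$ is exactly the right target: one needs that the chosen representative system is \emph{nested}, i.e. that truncating an element of $S_{n+1}$ modulo $\m^n$ lands in $S_n$, which is precisely how the $S_n$ were constructed (as partial sums $s_0 + s_1\pi + \cdots + s_{n-1}\pi^{n-1}$). With that in hand, the arithmetic of "the $n$-th digit of an element of $S_{n+1}$ ranges over $S_1$" is tautological. The Lipschitz check in the "only if" direction is the only place requiring a genuine (but routine) estimate, and it works because adding a perturbation of uniform size $\le q^{-n}$ to a $1$-Lipschitz function that is already \emph{constant} at scale $q^{-n}$ cannot spoil the Lipschitz bound at any scale.
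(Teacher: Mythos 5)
Your proposal is correct and follows essentially the same route as the paper: characterise the fibre via the uniqueness statement of Proposition~\ref{prop:approx} as $\Vert f_{n+1}-f_n\Vert_\infty\leq q^{-n}$, i.e.\ congruence modulo $\m^n$, and then use the nested structure of the sets $S_m$ to identify the difference as $\pi^n g$ with $g$ valued in $S_1^d$. You merely spell out in more detail the verifications (values in $S_{n+1}^d$, local constancy, the $1$-Lipschitz estimate) that the paper dismisses as clear, which is fine.
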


\begin{proof}
We notice first that any function $f_{n+1}$ of the form $f_n + \pi^n g$ 
clearly lies in $\Omega^\an_{n+1}$ and maps to $f_n$ under 
$\psi^\an_{n+1,n}$ because
$$\Vert f_{n+1} - f_n \Vert_\infty = \Vert \pi^n g \Vert_\infty = q^{-n} 
\cdot \Vert g \Vert_\infty \leq q^{-n}.$$
Pick now $f_{n+1} \in \Omega^\an_{n+1}$ such that $\psi^\an_{n+1,n}(f_{n+1}) 
= f_n$. Then $\Vert f_{n+1} - f_n \Vert_\infty \leq q^{-n}$, meaning that
$f_{n+1}$ is congruent to $f_n$ modulo $\m^n$, \emph{i.e.} that there 
exists a function $g : \P^{d-1} (K) \to R^d$ such that
$f_{n+1} = f_n + \pi^n g$. Looking at the shape of the elements of $S_n$ 
and $S_{n+1}$, we deduce that g must take its values in $S_1^d$.
\end{proof}

Let $G^\an$ be the set of functions $\P^{d-1} (K) \to S_1^d$ which are 
constant on each closed ball of radius $q^{-i}$. Applying repeatedly 
Proposition \ref{prop:fibrepsi}, we find that the functions in $\Omega^\an_n$ are 
exactly those that can be written as $\sum_{i=1}^{n} g_i \pi^{i-1}$ 
with $g_i \in G^\an_i$. Moreover this writing is unique. Passing to the 
limit, we find that the functions in $\Omega$ can all be written 
uniquely as an inifinite converging sum $\sum_{i=1}^\infty g_i \pi^{i-1}$ 
with $g_i \in G^\an_i$ as above. In other words there is a bijection:
\begin{equation}
\label{eq:bijOmega}
\begin{array}{rcl}
\displaystyle \prod_{i=1}^\infty G^\an_i & 
  \stackrel{\sim}{\longrightarrow} & \Omega \\
(g_1, g_2, \ldots) & \mapsto & 
  \displaystyle \sum_{i=1}^\infty g_i \pi^{i-1}
\end{array}
\end{equation}
Furthermore, if we endow $G^\an_i$ with the discrete topology, the 
above bijection is an homeomorphism. Since the $G^\an_i$'s are all
finite, we recover that $\Omega$ is compact. Finally, the Haar measure
on $\Omega$ can be described as follows: 
it corresponds under the bijection~\eqref{eq:bijOmega} to 
the product measure on $\prod_{i=1}^\infty G^\an_i$ where each factor 
is endowed with the uniform distribution (it may be seen directly but 
it is also a consequence of Proposition \ref{prop:uniformLn} below).
In other words picking a random element in $\Omega$ amounts to picking
each ``coordinate'' $g_i$ in $G^\an_i$ uniformly and independantly.

\subsection{Average size of a random Kakeya set}
\label{ssec:statement}

For $f \in \Omega$, recall that we have defined a Kakeya set $N(f)$.
Recall that $N(f)$ is closed and remark in addition that $N(f) \subset
R^d$ since $f$ takes its values in $R^d$.
Given an auxiliary positive integer $n$, we introduce the 
$(q^{-n})$-neighbourhood $N_n(f)$ of $N(f)$, that is:
$$N_n(f) = \Big\{ \, x \in R^d \,\, \Big| \,\, 
\inf_{y \in N(f)} |x-y| \leq q^{-n} \, \Big\}$$
and let $X_n(f)$ denote its measure.
This defines a collection of random variables $X_n : \Omega \to \N$ that 
measures the size of $N(f)$. Our main theorem provides an explicit 
formula for their mean. Before stating it, let us recall that $q$ 
denotes the cardinality of the residue field $k$.

\begin{theo}
\label{theo:main}
Let $(u_n)_{n \geq 0}$ be the sequence defined by the recurrence:
$$\begin{array}{rcl}
u_0 = 1 & ; & \displaystyle
u_{n+1} = 1 - \left( 1 - \frac{u_n}{q^{d-1}} \right)^{q^{d-1}}.
\end{array}$$
Then:
$$\E[X_n] = 1 - (1 - u_n)^{1 + q^{-1} + \cdots + q^{-(d-1)}}.$$
\end{theo}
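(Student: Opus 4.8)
The plan is to compute $\E[X_n]$ by fixing a point $x \in R^d$ and analysing the probability that $x \in N_n(f)$, then integrating over $x$. The first step is to observe that, because $X_n(f) = \mu(N_n(f))$ and $\mu$ is a probability measure on $R^d$, Fubini gives
$$\E[X_n] = \int_\Omega \mu(N_n(f)) \, d\mathbb{P}(f)
= \int_{R^d} \mathbb{P}\big(x \in N_n(f)\big) \, d\mu(x).$$
So I need to understand, for a \emph{fixed} $x$, the event $\{x \in N_n(f)\}$. Since $N(f) = \bigcup_a S_a(f)$ with $S_a(f) = \{t\cdot\can(a) + f(a) : t \in R\}$, the point $x$ lies within $q^{-n}$ of $N(f)$ iff there is some direction $a \in \P^{d-1}(K)$ and some $t \in R$ with $\Vert x - t\can(a) - f(a)\Vert_\infty \leq q^{-n}$. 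The key reduction is that everything here only depends on data modulo $\m^n$: the segment $S_a(f)$, reduced mod $\m^n$, depends only on $a$ through its class in $\P^{d-1}(R/\m^n)$ (equivalently on the ball of radius $q^{-n}$ containing $a$) and on $f(a)$ through its class mod $\m^n$, i.e. through $\psi^\an_n(f)$. Thus $x \in N_n(f)$ is equivalent to: there exists a radius-$q^{-n}$ ball $B$ in $\P^{d-1}(K)$ such that the mod-$\m^n$ segment in direction (the class of) $B$ through $\psi^\an_n(f)|_B$ passes within $q^{-n}$ of $x$. By the description of the Haar measure in~\eqref{eq:bijOmega}, $\psi^\an_n(f)$ is obtained by choosing, independently and uniformly, a value in $S_n^d$... more precisely, the values $f(a) \bmod \m^n$ on the finitely many radius-$q^{-n}$ balls are \emph{not} independent (the Lipschitz condition couples nearby balls), and managing this coupling is where the recurrence will come from.

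The second step is therefore to set up the recursion on $n$ via the tree structure of $\P^{d-1}(K)$ under refinement of balls. Condition on $\psi^\an_{n}(f) = f_n$; by Proposition~\ref{prop:fibrepsi} (applied $n$ times, or rather the passage from level $n-1$ to $n$), refining from radius $q^{-(n-1)}$ to radius $q^{-n}$ replaces each radius-$q^{-(n-1)}$ ball by $q^{d-1}$ sub-balls, and on each sub-ball the value of $f$ mod $\m^n$ is $f_{n-1}$ plus $\pi^{n-1}$ times an independent uniform element of $S_1^d = k^d$. I would introduce, for a fixed $x$ and a fixed ball $B$ of radius $q^{-m}$, the quantity $p_m = \mathbb{P}\big(x \text{ is within } q^{-n} \text{ of some segment } S_a(f), \ a \in B\big)$ — but to make the recursion clean it is better to work with the probability, call it $v_j$, that a \emph{single} radius-$q^{-(n-j)}$ ball $B$ "captures" $x$ at precision $q^{-n}$, i.e. that $x$ lies within $q^{-n}$ of $\bigcup_{a\in B} S_a(f)$, as a function of the ``depth'' $j$ remaining. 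For $j=0$ the ball is already a single point at precision $n$, and a direct count gives the base case; for the inductive step, $B$ captures $x$ iff at least one of its $q^{d-1}$ children does, and — crucially — the children behave independently once we have conditioned away the contribution already forced by $f_{n-j}$ on $B$ itself, which is where the clean product $\big(1 - \tfrac{u_{n-j}}{q^{d-1}}\big)^{q^{d-1}}$ appears (the $u$'s being a renormalisation of the $v$'s: $u_j = q^{?}\cdot v_j$ or similar, chosen so the recurrence reads $u_{j+1} = 1 - (1 - u_j/q^{d-1})^{q^{d-1}}$ with $u_0 = 1$).

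The third step is to assemble $\mathbb{P}(x \in N_n(f))$ from the per-ball probabilities and then integrate over $x \in R^d$. The point $x$ is captured iff \emph{some} radius-$q^{-n}$ ball captures it; but at the top level $\P^{d-1}(K)$ is not a single ball — it is covered by the radius-$1$ balls, which correspond to the $d$ values of $\piv$, and these top-level pieces are not symmetric (the piece with $\piv = i$ is a copy of $R^{d-i}$-worth of directions, roughly). Running the above recursion down from level $0$ to level $n$ inside each such piece, the events across the $1+q^{-1}+\cdots+q^{-(d-1)}$ "worth" of top-level structure combine — again by the independence coming from~\eqref{eq:bijOmega} — into $\mathbb{P}(x \in N_n(f)) = 1 - (1-u_n)^{1+q^{-1}+\cdots+q^{-(d-1)}}$, \emph{uniformly in $x$}; integrating the constant over $R^d$ (measure $1$) gives the claimed formula for $\E[X_n]$. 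I expect the main obstacle to be the second step: proving that the $q^{d-1}$ children of a ball contribute \emph{independently} to the capture event, and correctly bookkeeping the contribution of the "already-decided" lower-order part of $f$ on the parent ball, so that the recursion closes exactly in the stated form rather than with error terms. A secondary subtlety is handling the non-uniform top level (the $\piv$ stratification and the meaning of the non-integer exponent $1+q^{-1}+\cdots+q^{-(d-1)}$) rigorously — most likely this exponent arises as a limit or as an exact count of "independent coordinate directions" weighted by the measures $\mu(\{\piv = i\}) = q^{-(i-1)}(1-q^{-1})$, and pinning down that combinatorial identity is the other place where care is needed.
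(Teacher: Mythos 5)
Your route is genuinely different from the paper's, and it can be made to work. The paper never fixes a point of $R^d$: it expands $\card N(f)$ by inclusion--exclusion over all finite subsets $A$ of directions, computes $\E[C_A]$ exactly (Proposition~\ref{prop:directionalmean}), counts the subsets with a prescribed height function (Proposition~\ref{prop:heightcount}), and closes the resulting alternating sum by a combinatorial bijection on height functions that reproduces the recurrence. You instead compute the pointwise probability $\P[x\in N_n(f)]$ via Fubini and realise the recurrence as the survival probability of an exact branching-type process on the tree of direction balls; this is more transparent about where $u_{j+1}=1-(1-u_j/q^{d-1})^{q^{d-1}}$ comes from, but it trades the paper's exact counting for an exact conditional-independence claim --- which is precisely the step you leave open.

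Two things must be supplied before this is a proof. First, the independence you flag as the main obstacle does hold, and the mechanism should be made explicit: for $a_j\in\P^{d-1}(R_j)$, the level-$j$ event is that there exists $t\in R_j$ with $x\equiv \psi_j(f)(a_j)+t\cdot\can_j(a_j)\pmod{\m^j}$, and on this event $t$ is unique modulo $\m^j$ because the pivot coordinate of $\can_j(a_j)$ equals $1$. Given that scalar, the level-$(j{+}1)$ condition for a child $a_{j+1}$, after subtracting the part already forced at level $j$ and dividing by $\pi^j$, reads $g_{j+1}(a_{j+1})\in y+k\cdot\can_1(\sp_{j+1,1}(a_{j+1}))$ with $y\in k^d$ determined by the data of levels $\leq j$, where $g_{j+1}$ is the fresh digit of $f$ furnished by Proposition~\ref{prop:torsor} (equivalently by~\eqref{eq:bijOmega}). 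Since the values of $g_{j+1}$ are uniform on $k^d$, independent of one another and of the past, each child survives with conditional probability exactly $q^{-(d-1)}$, independently; so $u_j$ is literally the survival probability of a surviving node with $j$ levels remaining (no renormalisation of your $v_j$ is needed) and the recursion closes with no error terms. Second, your top-level assembly uses the wrong decomposition: the first level of the tree is not the $d$ pivot strata (these are not balls and are not interchangeable) but the $\card\P^{d-1}(k)=1+q+\cdots+q^{d-1}$ fibres of $\sp_1$, each alive independently with probability $q^{-(d-1)}$. This gives $\P[x\in N_n(f)]=1-(1-u_{n-1}/q^{d-1})^{1+q+\cdots+q^{d-1}}$, uniformly in $x$, and the non-integer exponent in the statement then appears purely algebraically by substituting $1-u_n=(1-u_{n-1}/q^{d-1})^{q^{d-1}}$ --- not as a limit, nor from weighting by $\mu(\{\piv=i\})$. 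With these two repairs, integrating the constant over $R^d$ yields the theorem.
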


This theorem will be proven in Section \ref{sec:proofs}. For now, we 
would like to comment on it a bit and derive some corollaries. The
first one justifies the title of this article.

\begin{cor}
\label{cor:measure}
The set $N(f)$ has measure zero almost surely.
\end{cor}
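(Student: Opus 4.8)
The plan is to deduce Corollary~\ref{cor:measure} from Theorem~\ref{theo:main} via a standard limiting and Borel--Cantelli type argument. First I would observe that $N(f) = \bigcap_{n \geq 1} N_n(f)$, since $N(f)$ is closed: a point $x$ lies in $N(f)$ precisely when $\inf_{y \in N(f)} |x-y| = 0$, which means $x \in N_n(f)$ for every $n$. The sets $N_n(f)$ are nested decreasingly, so by continuity of the measure from above, $\mu(N(f)) = \lim_{n \to \infty} X_n(f)$ pointwise on $\Omega$. Thus it suffices to show that this limit is $0$ almost surely.

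Next I would analyse the deterministic sequence $(u_n)$. One checks easily that $u_n \in (0,1]$ for all $n$ and that the sequence is decreasing: writing $\phi(u) = 1 - (1 - u/q^{d-1})^{q^{d-1}}$, a convexity argument (Bernoulli's inequality, or expanding the binomial) gives $\phi(u) \leq u$ for $u \in [0,1]$, with equality only at $u = 0$. Hence $(u_n)$ converges to the unique fixed point $\ell \in [0,1]$ of $\phi$ in that range. Since $\phi(u) = u$ forces $(1 - u/q^{d-1})^{q^{d-1}} = 1 - u$, and the only solutions in $[0,1]$ are $u = 0$ (the other would require $q^{d-1} = 1$), we get $u_n \to 0$. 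Plugging this into the formula of Theorem~\ref{theo:main}, and noting that $1 + q^{-1} + \cdots + q^{-(d-1)}$ is a fixed positive constant, we obtain $\E[X_n] \to 0$ as $n \to \infty$.

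Now I would combine this with monotonicity of $X_n$ in $n$: since $N_{n+1}(f) \subseteq N_n(f)$, we have $X_{n+1} \leq X_n$ pointwise, so $X(f) := \lim_n X_n(f) = \inf_n X_n(f)$ is a well-defined nonnegative random variable with $0 \leq X \leq X_n$ for every $n$. Taking expectations, $0 \leq \E[X] \leq \E[X_n] \to 0$, hence $\E[X] = 0$. A nonnegative random variable with zero expectation vanishes almost surely, so $X(f) = 0$, i.e. $\mu(N(f)) = 0$, for almost all $f \in \Omega$. (Measurability of the $X_n$, hence of $X$, follows from the fact that each $N_n(f)$ is determined by finitely many ``coordinates'' $g_i$ of $f$ in the description~\eqref{eq:bijOmega}, which is exactly the kind of fact the forthcoming proof of Theorem~\ref{theo:main} will establish; here I would just invoke it.)

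The only genuinely non-routine point is confirming $u_n \to 0$ rather than to some positive limit --- but this is elementary once one has the inequality $\phi(u) \leq u$ and the fact that the fixed-point equation has no positive root. Everything else is soft measure theory. I would therefore structure the write-up as: (i) the identity $\mu(N(f)) = \lim_n X_n(f)$ and monotonicity; (ii) the fixed-point analysis giving $u_n \to 0$ and hence $\E[X_n] \to 0$; (iii) the conclusion by dominated convergence / the $\E[X] = 0$ argument.
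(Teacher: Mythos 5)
Your proposal is correct and follows essentially the same route as the paper: monotonicity of $X_n$, the elementary inequality $\bigl(1-\tfrac{x}{q^{d-1}}\bigr)^{q^{d-1}} > 1-x$ for $x>0$ forcing $u_n \to 0$, hence $\E[X_n]\to 0$, and the identity $N(f)=\bigcap_n N_n(f)$ by closedness. The only cosmetic difference is that you bound $\E[X]\leq\E[X_n]$ directly where the paper invokes dominated convergence, which changes nothing of substance.
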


\begin{proof}
The sequence $(X_n)_{n \geq 1}$
defines a nonincreasing sequence of bounded random variables 
and therefore converges when $n$ goes to infinity. Set $X = \lim_{n 
\to \infty} X_n$. Noting that the $X_n$'s are all bounded by $1$, it 
follows from the dominated convergence theorem that
$\E[X] = \lim_{n \to \infty} \E[X_n]$.
Observing that
$$\forall x > 0, \quad
\left( 1 - \frac x {q^{d-1}} \right)^{q^{d-1}} > \, 1 - x$$
we deduce that the sequence $(u_n)_{n \geq 1}$ of Theorem 
\ref{theo:main} is decreasing and therefore converges. Furthermore, its 
limit is necessarily $0$. This implies that $\E[X] = 0$. Since $X \geq 0$, 
we deduce that $X = 0$ almost surely. Moreover, for a fixed $f \in 
\Omega$, $X(f)$ is the volume of the $\bigcap_n N_n(f)$ which is equal
to $N(f)$ because the latter is closed.
Therefore $N(f)$ has measure zero almost surely.
\end{proof}

\paragraph{Around Kakeya conjecture.}

In the real setting, the classical Kakeya conjecture asks whether any
Besikovitch set in $\R^d$ has maximal Hausdorff dimension. In the
non-archimedean setting, the analogue of the Kakeya conjecture can be
formulated as follows.

\begin{conj}[Kakeya Conjecture]
\label{conj:Kakeya}
Let $B$ be a bounded Besikovitch set in $K^d$. For any positive integer 
$n$, let $B_n$ be the $(q^{-n})$-neighbourhood of $B$:
$$B_n = \Big\{ \, x \in K^d \,\, \Big| \,\, 
\inf_{y \in B} |x-y| \leq q^{-n} \, \Big\}$$
and $\mu_n$ be its Haar measure. Then $|\log\:\mu_n| = o(n)$ when $n$
goes to infinity.
\end{conj}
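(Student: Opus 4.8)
Since Conjecture~\ref{conj:Kakeya} is open in general, what follows is a strategy rather than a complete proof, together with an honest account of where it stalls. The first move is to recast the statement combinatorially. Because the number of balls of radius $q^{-n}$ needed to cover a bounded set $B$ is comparable to $\mu_n\, q^{dn}$, the estimate $|\log \mu_n| = o(n)$ is \emph{exactly} the assertion that $B$ has full box-counting dimension $d$. To attack it I would discretise at scale $q^{-n}$. For each direction $a \in \P^{d-1}(K)$ the Besikovitch set $B$ contains a unit segment, whose $q^{-n}$-neighbourhood is a \emph{tube} $T_a \subset B_n$ of measure $\mu(T_a) = q^{-(d-1)n}$. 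Directions become indistinguishable below scale $q^{-n}$, so there are about $q^{(d-1)n}$ relevant tubes (one per point of $\P^{d-1}(R/\m^n)$), and they satisfy $\sum_a \mu(T_a) \asymp 1$. Thus $\mu_n = \mu\big(\bigcup_a T_a\big)$, and the entire problem reduces to lower-bounding the measure of this union, i.e.\ to controlling the overlaps of the $T_a$.

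The crucial geometric input is that overlaps are governed by the projective distance. By Corollary~\ref{cor:distproj}, $\dist(a,b)$ is the largest norm of a $2\times 2$ minor of the direction matrix, and a direct computation then gives $\mu(T_a \cap T_b) \approx q^{-dn}/\dist(a,b)$. When $d=2$ this is enough: I would run Córdoba's $L^2$ argument, which by Cauchy--Schwarz yields $\mu_n \geq \big(\sum_a \mu(T_a)\big)^2 / \sum_{a,b}\mu(T_a \cap T_b)$. Using that the number of directions at distance $q^{-k}$ is $\approx q^{(d-1)(n-k)}$, the denominator evaluates to $\asymp n$ for $d=2$, whence $\mu_n \gtrsim 1/n$ and therefore $|\log \mu_n| \lesssim \log n = o(n)$. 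This settles the conjecture for $d=2$ (recovering Dummit--Hablicsek), and it dovetails nicely with Theorem~\ref{theo:main}, which gives $\mu_n \asymp 1/n$ for the \emph{random} Kakeya set: the deterministic lower bound and the random exact rate agree.

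The main obstacle is $d \geq 3$. The same overlap estimate drives the $L^2$ and bush arguments, but in higher dimensions these incidence methods produce only a bound of the form $\mu_n \gtrsim q^{-c(d)\,n}$ with $c(d) > 0$ --- equivalently box dimension bounded away from $d$, the sharpest such geometric bounds being of Wolff--Katz--Tao type --- which is $\Theta(n)$ rather than $o(n)$. The tool that resolves the \emph{finite-field} analogue completely is Dvir's polynomial method, and the natural attempt is to transplant it by reducing modulo $\m^n$ and working over the finite ring $R/\m^n$. This is precisely where I expect the argument to break: $R/\m^n$ is a chain ring, not a field, so a nonzero polynomial can vanish at every point of a line. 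For instance $\pi^{n-1}\prod_{c \in S}(t-c)$ has degree only $q$ yet vanishes at all $q^n$ values of $t \in R/\m^n$, since $\prod_{c \in S}(t-c) \in \m$ for every $t$; the dimension count underlying Dvir's argument then collapses. Finding a substitute for the Nullstellensatz over $R/\m^n$ --- perhaps a multiplicity or lifting scheme that tracks the $\pi$-adic filtration level by level --- is, in my view, the crux on which the full conjecture hinges, and the reason it remains open for $d \geq 3$.
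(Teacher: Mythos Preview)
The statement you were asked to address is a \emph{conjecture}, not a theorem, and the paper does not prove it in general; it explicitly says the case $d\geq 3$ is ``widely open''. Your proposal is entirely appropriate in recognising this and offering a strategy plus an honest account of the obstruction.

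For the part that \emph{is} proved --- the case $d=2$ --- your approach coincides with the paper's. The paper (Theorem~\ref{theo:conj2} and its algebraic version Theorem~\ref{theo:conj2alg}, proved in \S\ref{ssec:conjKakeya}) runs exactly the Córdoba-type $L^2$/Cauchy--Schwarz argument you describe: one sets $\psi=\sum_a \1_{S_a}$, applies Cauchy--Schwarz against $\1_B$, and reduces to the pairwise overlap bound $\card(S_a\cap S_b)\leq q^{v_n(a,b)}$, which is obtained by putting the $2\times 2$ direction matrix in Smith normal form via Corollary~\ref{cor:distproj}. Summing over the $\approx q^{n-v}$ directions at each distance $q^{-v}$ gives the denominator $\asymp n$ and hence $\mu_n\gtrsim 1/n$. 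Your tube formulation and overlap estimate $\mu(T_a\cap T_b)\approx q^{-dn}/\dist(a,b)$ are the measure-theoretic translation of precisely this computation.

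Your discussion of the $d\geq 3$ obstruction is also on point and goes somewhat beyond what the paper says: the paper simply records that the conjecture is open in higher dimension, whereas you correctly diagnose that transplanting Dvir's polynomial method to the chain ring $R/\m^n$ fails because nonzero low-degree polynomials can vanish on entire lines (your example $\pi^{n-1}\prod_{c\in S}(t-c)$ is exactly the right one). That is indeed the heart of the difficulty, and nothing in the paper claims otherwise.
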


\begin{rem}
Using the fact the the balls of radius $q^{-n}$ are pairwise disjoint
in the non-archimedean setting, one derives that the minimal number of balls
needed to cover $B_n$ is $q^{nd}\mu_n$. The Hausdorff dimension of $B$
is then defined by the limit of the sequence:
$$\frac{\log(q^{nd}\mu_n)}{n \: \log q} = d + \frac{\log \: \mu_n}
{n \: \log q}$$
and thus is equal to $d$ if and only if $|\log\:\mu_n| = o(n)$.
\end{rem}

The non-archimedean Kakeya conjecture is known in dimension $2$ thanks 
to the works of Dummit and Hablicsek \cite[Theorem~1.2]{dummit}. It is 
contrariwise widely open in higher dimensions (to our knowledege).
Before going further, we state a slight improvement of Dummit and 
Hablicsek's result.

\begin{theo}
\label{theo:conj2}
Let $B$ be a bounded Besikovitch set in $K^2$. For any positive integer $n$, 
let $B_n$ be the $(q^{-n})$-neighbourhood of $B$ and let $\mu_n$ be
its Haar measure. Then:
$$\mu_n \geq \frac 1 {\frac{q-1}{q+1} \: n + 1}.$$
\end{theo}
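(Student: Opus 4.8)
The plan is to prove this as a one-dimensional estimate on fibres, following the classical ``bush'' philosophy adapted to the ultrametric setting. Since $B$ is a bounded Besikovitch set in $K^2$, after translating and rescaling we may assume $B \subset R^2$. For each direction $a \in \P^1(K)$, $B$ contains a unit segment $S_a = \{t\,\can(a) + b_a : t \in R\}$. Fix the level $n$. The first step is to partition $\P^1(K)$ according to the distance function of Corollary~\ref{cor:distproj}: two directions $a,b$ are ``$q^{-j}$-close'' exactly when the $2\times 2$ minor built from their reduced representatives has norm $q^{-j}$. I would estimate, for each $j$ between $0$ and $n$, how much of the measure of $B_n$ is forced to exist by a maximal $q^{-j}$-separated family of directions, and then optimise the trade-off: few widely-separated directions give segments whose $q^{-n}$-neighbourhoods are essentially disjoint (hence a lot of volume, proportional to the number of directions times $q^{-n}$), while many close-together directions have segments that may overlap badly, but then each such segment, being $q^{-j}$-close to many others, produces a ``bush'' whose neighbourhood still has controlled volume.

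Concretely, the key geometric lemma I would establish is the ultrametric incidence bound: if $a$ and $b$ are directions with $\dist(a,b) = q^{-j}$ (with $0 \le j \le n$), then the $q^{-n}$-neighbourhoods of $S_a$ and $S_b$ intersect in a set of measure at most $q^{-n} \cdot q^{-(n-j)} = q^{-(2n-j)}$ — this is because, along the shared segment, the two lines diverge at ``rate'' $q^{-j}$, so they stay within $q^{-n}$ of each other only for a sub-ball of $R$ of radius $q^{-(n-j)}$ in the parameter $t$. Conversely $\mu\big((S_a)_n\big) = q^{-n}$ for every single direction. Feeding these two facts into a Cauchy–Schwarz / second-moment argument over a $q^{-j}$-net of directions (whose cardinality is of order $q^{j}$ — there are $q^{j-1}(q+1)$ closed balls of radius $q^{-j}$ in $\P^1(K)$, or $q+1$ when $j=0$), and summing the overlap contributions as a geometric-type series in $j$, yields after the optimisation the bound $\mu_n \ge \big(\tfrac{q-1}{q+1}\,n + 1\big)^{-1}$. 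The constant $\tfrac{q-1}{q+1}$ should emerge precisely from carefully tracking the net cardinalities $q+1, (q+1), q(q+1), q^2(q+1), \ldots$ rather than the cruder $q^j$, which is exactly the refinement over the Dummit–Hablicsek argument.

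I expect the main obstacle to be the bookkeeping in the second-moment step: one must choose the net scale, or more likely run the estimate simultaneously at all scales and telescope, so that the geometric series $\sum_j q^{-j}\cdot(\text{overlap at scale }j)$ closes up with the right leading constant rather than merely the right order of magnitude. In particular one has to be careful that the $q^{-n}$-neighbourhood $(S_a)_n$ — which is a genuine cylinder of measure $q^{-n}$, not a measure-zero line — interacts correctly with the distance levels; the ultrametric inequality is what makes the overlaps nest cleanly (either $(S_a)_n$ and $(S_b)_n$ are disjoint, or one governs the other up to the divergence scale), and this is the place where the argument genuinely uses non-archimedean geometry rather than a naïve transcription of the real bush bound. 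A clean way to package this is to define, for $x \in R^2$, the ``richness'' $r_n(x) = \mu\{a \in \P^1(K) : x \in (S_a)_n\}$ (with respect to the natural measure on $\P^1(K)$), observe $\int_{B_n} r_n\,d\mu = \mu(\Sp^1)\cdot q^{-n}$ is fixed, bound $\int_{B_n} r_n^2\,d\mu$ using the incidence lemma summed over $j$, and conclude by Cauchy–Schwarz that $\mu(B_n) \ge \big(\int r_n\big)^2 / \int r_n^2$, which after simplification is exactly the claimed linear-in-$n$ lower bound.
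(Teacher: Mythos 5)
Your proposal is essentially the paper's own proof: it too applies Cauchy--Schwarz to the sum of the indicator functions of the segments' $q^{-n}$-neighbourhoods, uses the same pairwise intersection bound (measure at most $q^{\min(n,v(a,b))-2n}$, governed by the projective distance via the $2\times 2$ minor), and sums over scales using the count $q^{v-1}(q+1)$ of balls of radius $q^{-v}$ in $\P^1$, which is exactly where the constant $\frac{q-1}{q+1}$ comes from. The only real difference is presentational: the paper runs the argument in the finite quotient $R_n^2$ with the finitely many directions of $\P^1(R_n)$ (and an $\ell$-scaled torsion statement to handle an arbitrary bounded $B$), which sidesteps both the measurability of the selection $a \mapsto b_a$ implicit in your richness function and the slightly inaccurate ``translate and rescale so that $B \subset R^2$'' reduction.
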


We postpone the proof of this theorem to \S \ref{ssec:conjKakeya} 
because it will be convenient to write it down using the algebraic 
framework on which we will elaborate later on. 
Instead let us go back to random non-archimedean Kakeya sets. Studying 
further the asymptotic behaviour of the sequence $(u_n)$ defined in 
Theorem \ref{theo:main}, one can determine an equivalent of the mean of 
the random variable $X_n$.

\begin{prop}
\label{prop:equiv}
We have the equivalent:
$$\E[X_n] \sim \frac {2 \cdot (q^d - 1)}{(q-1)(q^{d-1}-1)} 
\cdot \frac 1 n$$
when $n$ goes to infinity.
\end{prop}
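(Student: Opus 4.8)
The plan is to analyze the asymptotics of the sequence $(u_n)$ defined by the recurrence $u_{n+1} = 1 - (1 - u_n/q^{d-1})^{q^{d-1}}$, show that $u_n \sim c/n$ for an explicit constant $c$, and then plug this into the closed form $\E[X_n] = 1 - (1-u_n)^{1 + q^{-1} + \cdots + q^{-(d-1)}}$ from Theorem~\ref{theo:main}. Since we already know from the proof of Corollary~\ref{cor:measure} that $u_n \to 0$, the map $x \mapsto 1 - (1-x/q^{d-1})^{q^{d-1}}$ can be expanded near $x = 0$: writing $m = q^{d-1}$, we have $1 - (1 - x/m)^m = x - \frac{m-1}{2m}x^2 + O(x^3)$. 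So the recurrence behaves like $u_{n+1} = u_n - \frac{m-1}{2m}u_n^2 + O(u_n^3)$.

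The standard tool for such a recurrence is to look at the reciprocals: set $v_n = 1/u_n$. Then
$$v_{n+1} = \frac{1}{u_n - \frac{m-1}{2m}u_n^2 + O(u_n^3)} = v_n \cdot \frac{1}{1 - \frac{m-1}{2m}u_n + O(u_n^2)} = v_n + \frac{m-1}{2m} + O(u_n).$$
Since $u_n \to 0$, the error terms $O(u_n)$ are summable in the Cesàro sense, so $v_n / n \to \frac{m-1}{2m}$, i.e. $v_n \sim \frac{m-1}{2m}\, n$, hence
$$u_n \sim \frac{2m}{(m-1)\,n} = \frac{2 q^{d-1}}{(q^{d-1}-1)\,n}.$$
(To make the Cesàro step rigorous one writes $v_n = v_1 + \sum_{k=1}^{n-1}(v_{k+1} - v_k)$, notes $v_{k+1} - v_k \to \frac{m-1}{2m}$, and applies the Cesàro mean; this is routine.) Finally, since $u_n \to 0$, we have $1 - (1-u_n)^s \sim s\, u_n$ with $s = 1 + q^{-1} + \cdots + q^{-(d-1)} = \frac{q^d - 1}{q^{d-1}(q-1)}$, so
$$\E[X_n] \sim s \cdot u_n \sim \frac{q^d-1}{q^{d-1}(q-1)} \cdot \frac{2 q^{d-1}}{(q^{d-1}-1)\, n} = \frac{2(q^d-1)}{(q-1)(q^{d-1}-1)} \cdot \frac 1 n,$$
which is the claimed equivalent.

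The only genuinely delicate point is controlling the error terms in the reciprocal recurrence well enough to conclude $v_n \sim \frac{m-1}{2m} n$ rather than merely $v_n = O(n)$ or $v_n \gg n$. This is handled by the observation that the increments $v_{n+1} - v_n$ converge to $\frac{m-1}{2m}$ (because the correction is $O(u_n)$ and $u_n \to 0$), and then a Stolz–Cesàro argument gives the sharp asymptotic; no finer expansion of $u_n$ (e.g. second-order terms) is needed for the first-order equivalent requested here. Everything else is a direct substitution using the exact formula of Theorem~\ref{theo:main}.
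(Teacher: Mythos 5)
Your argument is correct and follows essentially the same route as the paper: expand the recurrence to get $u_{n+1} = u_n - c\,u_n^2 + o(u_n^2)$ with $c = \frac{q^{d-1}-1}{2q^{d-1}}$, pass to reciprocals $w_n = 1/u_n$, observe $w_{n+1}-w_n \to c$ and conclude $u_n \sim \frac{1}{cn}$ by Ces\`aro--Stolz, then use $1-(1-u_n)^s \sim s\,u_n$ with $s = 1 + q^{-1} + \cdots + q^{-(d-1)}$ in the exact formula of Theorem~\ref{theo:main}. You simply make explicit a couple of steps the paper leaves implicit (the binomial computation of $c$ and the final linearization), so there is nothing to correct.
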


\begin{proof}
A simple computation shows that $u_{n+1} = u_n - c \cdot u_n^2 + o(u_n^2)$
with $c = \frac {q^{d-1} - 1}{2 q^{d-1}}$. For $n \geq 0$, define $w_n = 
\frac 1 {u_n}$, so that we have:
$$w_{n+1} - w_n = \frac{u_n - u_{n+1}}{u_n u_{n+1}} 
 = \frac{c u_n^2 + o(u_n^2)}{u_n u_{n+1}}
 = \frac{c u_n^2 + o(u_n^2)}{u_n^2 - c u_n^3 + o(u_n^3)} = c + o(1).$$
Thus $w_n \sim cn$ and $u_n \sim \frac 1{cn}$. The claimed result then
follows from Theorem \ref{theo:main}.
\end{proof}

It follows from Proposition \ref{prop:equiv} that:
$$-\log \E[X_n] = 
\log n + 
\log\left(\frac{(q-1)(q^{d-1}-1)}{2 \cdot (q^d - 1)}\right)
+ o(1).$$
In particular $|\log \E[X_n]| = o(n)$. Proposition \ref{prop:equiv}
then might be thought as an \emph{average} strong version of Conjecture
\ref{conj:Kakeya}. We remark in addition that the lower bound given by 
Theorem \ref{theo:conj2} is rather close to the expected value of $X_n$ 
provided by Proposition \ref{prop:equiv}: roughly the differ by a factor 
$2$. We then expect the random variables $X_n$ to be quite concentrated 
around their mean. We refer to Section \ref{sec:numeric} for numerical 
simulations supporting further this expectation.

\section{Algebraic reformulation}
\label{sec:algebraic}

The aim of this section is merely to rephrase the constructions, 
theorems and conjectures of Section \ref{sec:results} in the more 
abstract framework of algebra in which the proofs of Section 
\ref{sec:proofs} will be written.


For any positive integer $n$, set $R_n = R/\m^n = R/\pi^n R$. 
It is a finite 
ring of cardinality $q^n$. Concretely if $S \subset R$ is a set of 
representatives of the quotient $R/\m = k$, any class in $R_n$ is
uniquely represented by an element of the shape:
\begin{equation}
\label{eq:representatives}
s_0 + s_1 \pi + s_2 \pi^2 + \cdots + s_{n-1} \pi^{n-1}
\end{equation}
where the $s_i$'s lie in $S$ and we recall that $\pi$ is a fixed
uniformizer of $R$ (that is a generator of $\m$).
Let $p_n : R^d \to R_n^d$ denote the canonical projection taking a tuple 
$(x_1, \ldots, x_d)$ to its class modulo $\m^n$ (obtained by taking the 
class modulo $\m^n$ of each coordinate separatedly).

\begin{prop}
\label{prop:neighbourhood}
Let $E$ be a subset of $R^d$ and $E_n$ denote its 
$(q^{-n})$-neighbourhood, that is:
$$E_n = \Big\{ \, x \in K^d \,\, \Big| \,\, 
\inf_{y \in E} |x-y| \leq q^{-n} \, \Big\}.$$
Then $E_n = p_n^{-1}(p_n(E))$ and the volume of $E_n$ is:
$$\mu(E_n) = q^{-nd} \cdot \card p_n(E).$$
\end{prop}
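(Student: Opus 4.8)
The plan is to unwind the definitions of the $(q^{-n})$-neighbourhood and of the projection $p_n$, using the compatibility between the non-archimedean norm on $K^d$ and the ``modulo $\m^n$'' congruence. The key observation, which I would state first, is that for $x, y \in R^d$ one has $|x - y| \leq q^{-n}$ if and only if $p_n(x) = p_n(y)$: this is just the coordinatewise statement that $|x_i - y_i| \leq q^{-n} \iff x_i \equiv y_i \pmod{\m^n}$, together with the fact that the infinity norm is the maximum of the coordinate norms. One should also note at the outset that $E_n \subset R^d$: if $x \in K^d$ has distance $\leq q^{-n} \leq 1$ from some $y \in E \subset R^d$, then $\|x\|_\infty \leq \max(\|y\|_\infty, |x-y|) \leq 1$, so $x \in R^d$, which means we may freely restrict attention to points of $R^d$.

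For the set-theoretic identity $E_n = p_n^{-1}(p_n(E))$, I would argue by double inclusion. If $x \in E_n$, pick $y \in E$ with $|x - y| \leq q^{-n}$; then $p_n(x) = p_n(y) \in p_n(E)$, so $x \in p_n^{-1}(p_n(E))$. Conversely, if $x \in p_n^{-1}(p_n(E))$ then $p_n(x) = p_n(y)$ for some $y \in E$, hence $|x - y| \leq q^{-n}$ and $x \in E_n$. (Here one uses that $E \subset R^d$ so that $x$, which lies in $R^d$ as noted above, and $y$ are both in the domain where the congruence criterion applies.)

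For the volume formula, the point is that $p_n^{-1}(p_n(E))$ is a disjoint union of fibres of $p_n$, one for each element of $p_n(E)$. Each fibre of $p_n : R^d \to R_n^d$ is a coset of $\ker p_n = (\m^n)^d = (\pi^n R)^d$, hence is a translate of $(\pi^n R)^d$ and has Haar measure $\mu((\pi^n R)^d) = |\pi^n|^d \cdot \mu(R^d) = q^{-nd}$, using the normalization $\mu(R) = 1$ and the scaling property $\mu(aE) = |a|\,\mu(E)$ applied coordinatewise. Summing over the $\card p_n(E)$ distinct fibres gives $\mu(E_n) = q^{-nd} \cdot \card p_n(E)$.

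None of the steps is a serious obstacle; the only thing requiring a little care is making sure the congruence criterion $|x-y| \leq q^{-n} \iff p_n(x) = p_n(y)$ is applied to points genuinely lying in $R^d$, which is why establishing $E_n \subset R^d$ first is worthwhile. The rest is bookkeeping with the Haar measure and the partition into fibres.
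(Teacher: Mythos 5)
Your proof is correct and follows essentially the same route as the paper: the congruence criterion $|x-y|\leq q^{-n} \Leftrightarrow p_n(x)=p_n(y)$ on $R^d$, the identification of $E_n$ with a disjoint union of fibres of $p_n$, and the fact that each fibre has measure $q^{-nd}$. The only (immaterial) difference is how that last fact is justified: you compute $\mu\big((\pi^n R)^d\big)=|\pi^n|^d$ via the scaling property, whereas the paper uses translation invariance together with the observation that the $q^{nd}$ fibres partition $R^d$.
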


\begin{proof}
Notice that, given $x = (x_1, \ldots, x_d)$ and $y = (y_1, \ldots, y_d)$ 
in $R^d$, $\Vert x-y \Vert_\infty \leq q^{-n}$ if and only if $x_i 
\equiv y_i \pmod {\m^n}$ for all $i$. As a consequence, the closed ball 
of radius $q^{-n}$ and centre $x$ is exactly $B_x = p_n^{-1}(p_n(x))$. 
This gives the first assertion of the proposition.

To establish the second assertion, it is enough to prove that each
$B_x$ has volume $q^{-nd}$. Observe that $B_x = B_y + (y-x)$. By the
properties of the Haar measure, we then must have $\mu(B_x) = \mu(B_y)$.
Finally we note that the $B_x$'s are pairwise distinct and cover the
whole space $R^d$ when $x$ runs over the tuples $(x_1, \ldots, x_d)$
where each $x_i$ has the shape~\eqref{eq:representatives}. Since there 
are $q^{nd}$ such elements, we are done.
\end{proof}

\subsection{The torsion Kakeya Conjecture}
\label{ssec:torsKakeya}

Recall that the sphere $\Sp^{d-1}(K)$ --- or equivalenty $\Sp^{d-1}(R)$ --- consists 
of tuples $(x_1, \ldots, x_d) \in R^d$ having one invertible coordinate. 
This algebraic description makes sense for more general rings and allows 
us to define $\Sp^{d-1}(R_n)$ as the set of tuples $(x_1, \ldots, x_d) 
\in R_n^d$ for which $x_i$ is invertible in $R_n$ for some $i$. Note
that an element $x \in R_n$ is invertible if and only if its image in
$R/\m = k$ does not vanish, \emph{i.e.} if and only if $x \not\equiv
0 \pmod \m$. 

We can now extend the definition of a Besikovitch set (\emph{cf} 
Definition \ref{def:Besikovitch}) and the Kakeya conjecture (\emph{cf}
Conjecture \ref{conj:Kakeya}) over $R_n$.

\begin{deftn}
Let $n$ be a positive integer and let $\ell \in \llbracket 0,n\rrbracket$.
Given $a \in \Sp^{d-1}(R_n)$, a \emph{segment of length $q^{-\ell}$} of 
direction $a$ is a subset of $R_n^d$ of the form $\big\{ t a + 
b : t\in \m^\ell \big\}$ for some $b \in R_n^d$.

\noindent
A \emph{$\ell$-Besikovitch set} in $R_n^d$ is a subset of $R_n^d$ 
containing a segment of length $q^{-\ell}$ in every direction.
\end{deftn}

\begin{conj}[Torsion Kakeya Conjecture]
There exists a sequence\footnote{This sequence may a priori depend on 
$K$, $d$ and $\ell$.} of positive real numbers $(\varepsilon_n)_{n \geq 1}$ 
converging to $0$ satisfying the following property: for any $n \geq 1$,
any $\ell \in \llbracket 0,n \rrbracket$ and any $\ell$-Besikovitch set 
$B$ in $R_n^d$, we have:
$$\log_q\:\card B \geq n \cdot (d - \varepsilon_n)$$
where $\log_q$ stands for the logarithm in $q$-basis.
\end{conj}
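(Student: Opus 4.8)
The plan is to transport Dvir's polynomial method \cite{dvir} from the residue field $k$ to the finite local ring $R_n = R/\m^n$, so as to attack the bound directly in \emph{every} dimension rather than through an equivalence with Conjecture~\ref{conj:Kakeya}. Fix $n$, $\ell$ and an $\ell$-Besikovitch set $B \subset R_n^d$, and suppose for contradiction that $\card B$ is small. First I would run an interpolation step: the degree-$\leq D$ polynomials in $x_1, \ldots, x_d$ with coefficients in $R_n$ form a free $R_n$-module of rank $\binom{D+d}{d}$, hence a set of cardinality $q^{n\binom{D+d}{d}}$, whereas the evaluation map $P \mapsto (P(x))_{x \in B}$ takes values in $R_n^B$, of cardinality $q^{n\,\card B}$. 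As soon as $\binom{D+d}{d} > \card B$ this map is not injective, and the difference of two colliding polynomials is a nonzero $P$ of degree $\leq D$ vanishing on all of $B$. Taking $D = q^n - 1$ makes $\binom{D+d}{d}$ of order $q^{nd}/d!$, so a contradiction at this size would give exactly $\log_q \card B \geq n(d - \varepsilon_n)$ with $\varepsilon_n = \log_q(d!)/n \to 0$.

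Next I would exploit the Besikovitch hypothesis direction by direction. For each $a \in \Sp^{d-1}(R_n)$ the set $B$ contains a segment $\{t\,a + b_a : t \in \m^\ell\}$, so the univariate polynomial $g_a(t) = P(t\,a + b_a) \in R_n[t]$ has degree $\leq D$ in $t$ and vanishes at the $q^{n-\ell}$ values $t \in \m^\ell/\m^n$. Over a field this would force $g_a \equiv 0$ and, reading off the coefficient of $t^D$, would show that the top homogeneous part of $P$ vanishes at $a$; this is Dvir's decisive step. Over the non-reduced ring $R_n$ it fails outright, since a nonzero polynomial may have far more roots than its degree. The substitute I would develop is a scale-by-scale analysis: filter $R_n$ by the powers $\m^i$, study $g_a$ on the associated graded pieces $\m^i/\m^{i+1}$ (each a $k$-vector space), and control the $\pi$-adic valuations of its coefficients by a Newton-polygon argument, with the aim of extracting at the top scale a nonzero homogeneous polynomial $\overline{P}$ over $k$, of degree $< q$, vanishing at the reduction $\bar a \in k^d$ of the direction.

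Granting that extraction, the endgame is again Dvir's: running it over every direction forces $\overline{P}$ to vanish at every nonzero point of $k^d$, and since $\deg \overline{P} < q$ the Schwartz--Zippel bound forces $\overline{P} = 0$, a contradiction. One would then bootstrap this mod-$\m$ statement back up the filtration to recover the full conclusion over $R_n$, keeping track of the quantitative loss in order to produce the sequence $(\varepsilon_n)$.

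The hard part will be the scale-by-scale root count. Its feasibility is governed by the tension between the degree budget $D \approx q^n$ forced by interpolation and the number $q^{n-\ell}$ of roots supplied by each segment: when $\ell = 0$ the segments are long enough that the field-style vanishing survives the passage through the graded pieces --- this is, in essence, the two-dimensional theorem of Dummit and Hablicsek \cite{dummit} --- but as $\ell$ increases the available roots become scarce relative to $D$, the bookkeeping across the pieces $\m^i/\m^{i+1}$ turns genuinely $d$-dependent, and it is precisely here, for $d \geq 3$, that I expect the argument to stall. Controlling this multiscale interaction uniformly in $\ell$ is the crux on which the whole statement rests.
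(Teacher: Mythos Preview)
The statement you are addressing is labelled a \emph{Conjecture} in the paper, and the paper supplies no proof of it in general dimension. What the paper does prove is the two-dimensional case (Theorem~\ref{theo:conj2alg}), and there the method is entirely different from yours: it is a Cauchy--Schwarz argument applied to the sum of indicator functions of the segments, reducing the problem to estimating pairwise intersections $\card(S_a \cap S_b)$ via an explicit $2\times 2$ linear system over $R_n$. No polynomial method appears anywhere in the paper.

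Your proposal is therefore not a proof to be compared against the paper's, but a research plan toward an open problem. As such it is honest about its own status: you correctly flag that the Dvir step ``many roots $\Rightarrow$ zero polynomial'' breaks over the non-reduced ring $R_n$, and you correctly identify the multiscale root-counting across the graded pieces $\m^i/\m^{i+1}$ as the crux, and you yourself say you expect the argument to stall there for $d\geq 3$. That is an accurate self-assessment. One concrete obstruction worth naming explicitly: after interpolation, the nonzero $P\in R_n[x_1,\dots,x_d]$ you produce may have all coefficients divisible by $\pi^k$ for some $k>0$; dividing out gives a $Q$ with a unit coefficient, but $Q$ then only vanishes on $B$ modulo $\m^{n-k}$, not in $R_n$. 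This loss of depth is exactly what your ``bootstrap back up the filtration'' would have to absorb, and controlling it uniformly in $\ell$ and $d$ is, as you say, the whole difficulty. In short: there is nothing to grade against the paper here, and your write-up is a reasonable sketch of why the conjecture is hard rather than a proof of it.
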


Theorem \ref{theo:conj2} admits an analogue in the torsion case as
well; it can be formulated as follows.

\begin{theo}
\label{theo:conj2alg}
For any positive integer $n$, any integer $\ell \in \llbracket 0,n
\rrbracket$ and any $\ell$-Besikovitch set $B$ in $R_n^2$, we have:
$$\card B \geq q^{2(n-\ell)} \cdot \frac 1 {\frac{q-1}{q+1} \: n + 1}.$$
\end{theo}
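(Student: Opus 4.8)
\textbf{Proof plan for Theorem~\ref{theo:conj2alg}.}
The plan is to reduce the torsion statement to the local-field statement of Theorem~\ref{theo:conj2} by a lifting argument. Fix $n$, fix $\ell \in \llbracket 0,n\rrbracket$, and let $B \subseteq R_n^2$ be an $\ell$-Besikovitch set. First I would lift $B$ to a subset of $R^2$: set $\widetilde B = p_n^{-1}(B) \subseteq R^2$, where $p_n : R^2 \to R_n^2$ is the canonical projection of \S\ref{sec:algebraic}. The point is that $\widetilde B$ should be (closely related to) a Besikovitch set in $K^2$ in a suitable rescaled sense, so that Theorem~\ref{theo:conj2} applies to it and then Proposition~\ref{prop:neighbourhood} translates the resulting measure bound back into a cardinality bound for $B$. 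Concretely, $\card B = \card p_n(\widetilde B) = q^{2n} \mu(\widetilde B_n)$, and $\widetilde B_n = \widetilde B$ since $\widetilde B$ is a union of balls of radius $q^{-n}$; so it suffices to show $\mu(\widetilde B) \geq q^{-2\ell}\cdot\big(\tfrac{q-1}{q+1}n+1\big)^{-1}$.

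The technical heart is to produce, from the segments of length $q^{-\ell}$ in $B$, genuine unit-length segments in a Besikovitch set living in $K^2$. A segment of direction $a \in \Sp^{1}(R_n)$ and length $q^{-\ell}$ in $R_n^2$ is $\{ta+b : t \in \m^\ell\}$; lifting $a$ to $\hat a \in \Sp^1(K)$ and $b$ to $\hat b \in R^2$, the preimage under $p_n$ of this segment contains the ``thickened segment'' $\{t\hat a + b' : t \in \m^\ell,\ b' \in \hat b + \m^n R^2\}$. After multiplying coordinates by $\pi^{-\ell}$ (equivalently, working at scale $q^{-\ell}$), these become unit-length segments up to an error of scale $q^{-(n-\ell)}$, i.e. they are unit segments in the $(q^{-(n-\ell)})$-neighbourhood of a set. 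Thus $\pi^{-\ell}\widetilde B$ contains, for every direction, a unit segment up to scale $q^{-(n-\ell)}$; applying Theorem~\ref{theo:conj2} with the neighbourhood parameter $n-\ell$ gives $\mu\big((\pi^{-\ell}\widetilde B)_{n-\ell}\big) \geq \big(\tfrac{q-1}{q+1}(n-\ell)+1\big)^{-1}$. Since $\widetilde B$ is already a union of radius-$q^{-n}$ balls, $(\pi^{-\ell}\widetilde B)$ is a union of radius-$q^{-(n-\ell)}$ balls, so $(\pi^{-\ell}\widetilde B)_{n-\ell} = \pi^{-\ell}\widetilde B$ and $\mu(\pi^{-\ell}\widetilde B) = q^{2\ell}\mu(\widetilde B)$. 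Combining: $\mu(\widetilde B) \geq q^{-2\ell}\big(\tfrac{q-1}{q+1}(n-\ell)+1\big)^{-1}$, and since $n - \ell \leq n$ this is $\geq q^{-2\ell}\big(\tfrac{q-1}{q+1}n+1\big)^{-1}$. Multiplying by $q^{2n}$ yields $\card B \geq q^{2(n-\ell)}\big(\tfrac{q-1}{q+1}n+1\big)^{-1}$, as claimed.

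The main obstacle I expect is bookkeeping the rescaling cleanly: one must check that "$\ell$-Besikovitch in $R_n^2$" translates exactly to "contains a unit segment in each direction, up to neighbourhood scale $q^{-(n-\ell)}$" in $K^2$ after the $\pi^{-\ell}$ dilation, and in particular that the direction sets $\Sp^1(R_n)$ and $\Sp^1(K)$ match up under reduction mod $\m^n$ (every reduced direction in $R_n$ lifts to a reduced direction in $K$, and conversely distinct directions in $K$ that are congruent mod $\m^n$ give segments inside the same thickened segment). This is why it is genuinely cleaner, as the authors note, to carry out the argument directly in the algebraic framework of \S\ref{sec:algebraic} rather than juggling $K$ and $R_n$ simultaneously; a self-contained proof would mirror the (not-yet-given) proof of Theorem~\ref{theo:conj2} at the finite level, running the same counting/incidence estimate — presumably a Cauchy--Schwarz or double-counting argument on lines through $R_n^2$, exploiting that two distinct directions meet in at most a controlled set — but with all segment lengths $q^{-\ell}$ instead of $1$, which only affects the final bound through the $q^{2(n-\ell)}$ prefactor. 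I would present the lifting version above as the conceptual route and defer the direct finite-level computation to \S\ref{ssec:conjKakeya} alongside the proof of Theorem~\ref{theo:conj2}, since both rest on the identical estimate.
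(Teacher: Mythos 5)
Your lifting/rescaling bookkeeping is essentially sound: $\widetilde B = p_n^{-1}(B)$ does contain, for every $\hat a \in \Sp^1(K)$, an exact segment $\{t\hat a + \hat b : t \in \m^\ell\}$ (not merely an approximate one), so $\pi^{-\ell}\widetilde B$ is a genuine bounded Besikovitch set in $K^2$; since it is a union of balls of radius $q^{-(n-\ell)}$ it equals its own $(q^{-(n-\ell)})$-neighbourhood, and Theorem~\ref{theo:conj2} applied at level $n-\ell$ would indeed give $\card B \geq q^{2(n-\ell)}\bigl(\frac{q-1}{q+1}(n-\ell)+1\bigr)^{-1}$, which is even slightly stronger than the stated bound (the case $\ell = n$ being trivial). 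So as a statement of equivalence between the two theorems, your reduction works.

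The genuine gap is that this route is circular within the paper: Theorem~\ref{theo:conj2} has no independent proof there — its proof is ``postponed'' precisely because it is \emph{deduced} from Theorem~\ref{theo:conj2alg} by the very same $p_n^{-1}$/rescaling dictionary (this deduction is written out right after the statement of Theorem~\ref{theo:conj2alg}, using Proposition~\ref{prop:neighbourhood}). In \S\ref{ssec:conjKakeya} the only argument actually carried out is the finite-level one, and that is where all the mathematical content lives: writing $\psi = \sum_a \psi_a$ with $\psi_a$ the indicator of the segment $S_a$, applying Cauchy--Schwarz to get $\bigl(\sum_a \card S_a\bigr)^2 \leq \card B \cdot \sum_{a,b}\card(S_a \cap S_b)$, proving the lemma that $\card S_a = q^{n-\ell}$ and $\card(S_a\cap S_b) \in \{0, q^{\min(n-\ell, v_n(a,b))}\}$ (via the $2\times 2$ minor computation behind Corollary~\ref{cor:distproj} and Proposition~\ref{prop:distn}), and then computing $\sum_b q^{v_n(a,b)} = n\,q^{n-1}(q-1) + q^{n-1}(q+1)$ by counting fibres of $\sp_{n,v}$. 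Your proposal defers exactly this estimate (``presumably a Cauchy--Schwarz or double-counting argument\dots''), so what you have written is a proof that the two theorems are equivalent, not a proof of either; to prove Theorem~\ref{theo:conj2alg} you must supply the incidence computation at the level of $R_n^2$, which is the paper's actual proof.
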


Again, we postpone the proof of this theorem to \S 
\ref{ssec:conjKakeya}. Let us however notice here that it implies 
Theorem \ref{theo:conj2}. Indeed, let $B$ be a bounded Besikovitch set 
in $R^2$. Let $\ell$ be an integer for which $B$ is included in the ball 
of centre $0$ and radius $q^\ell$. Then $\pi^{\ell} B \subset R$ and 
$p_n(\pi^{-\ell} B)$ is a $\ell$-Besikovitch set in $R_n^2$. Therefore, 
according to the above theorem, one must have:
$$\card p_n(\pi^{\ell}B) > q^{2(n-\ell)} \cdot \frac 1 {\frac{q-1}{q+1} \: n + 1}.$$
Combining this with Proposition \ref{prop:neighbourhood}, we get the
result.

\subsection{Algebraic description of the projective space}
\label{ssec:projectivealg}

The projective space $\P^{d-1}(K)$ --- considered as a metric space --- 
which has been introduced in \S \ref{ssec:projective} admits an 
algebraic description as well. In order to explain it, let us first
recall that $\P^{d-1}(K) = \Sp^{d-1}(K)/R^\times$. This allows us
to define a specialization map:
$$\begin{array}{rcl}
\sp_1 : \qquad \P^{d-1}(K) & \longrightarrow & \P^{d-1}(k) \smallskip \\{}
[a_1 : \cdots : a_d] & \mapsto & [\bar a_1 : \cdots : \bar a_d]
\end{array}$$
where $(a_1, \ldots, a_d) \in \Sp^{d-1}(K)$ and $\bar a_i$ denotes the 
image of $a_i$ in $k$. With these notations, the index $\piv(a)$ 
(defined in \S \ref{ssec:projective}) appears as the first index of a 
non-vanishing coordinate of $\sp_1(a)$. We notice that the mapping $\sp_1$ 
is surjective and that the preimage of any point in $\P^{d-1}(k)$ is in 
bijection with $R^{d-1}$. Indeed let us define $\piv_1(\bar a)$ as the 
smallest index of a non-vanishing coordinate of $\bar a$ and consider 
the unique representative $(\bar a_1, \ldots, \bar a_d)$ of $a$ such 
that $\bar a_{\piv_1(\bar a)} = 1$. Choose moreover a lifting $a \in R^d$ 
of $(\bar a_1, \ldots, \bar a_d)$ whose $\piv_1(\bar a)$-th coordinate is 
$1$. We can then define a bijection:
$$\begin{array}{rcl}
H_{\piv_1(\bar a)} & \longrightarrow & \sp_1^{-1}(\bar a) \smallskip \\
x & \mapsto & [ a + \pi x ]
\end{array}$$
where $H_{\piv_1(\bar a)}$ denote the coordinate hyperplane of $R^d$ 
defined by the equation $x_{\piv_n(\bar a)} = 0$. We remark moreover 
that the vectors $a + \pi x$ appearing above are all canonical 
representatives.

More generally, for any positive integer $n$, we define:
$$\P^{d-1}(R_n) = \Sp^{d-1}(R_n) / R_n^\times$$
Given $a \in \P^{d-1}(R_n)$, let $\piv_n(a)$ be the index of the first
invertible coordinate of $a$ and $\can_n(a) \in \Sp^{d-1}(R_n)$ 
be the unique representative of $a$ whose $\piv_n(a)$-th coordinate
is $1$. We have a specialization map of level $n$:
$$\begin{array}{rcl}
\sp_n : \qquad \P^{d-1}(K) & \longrightarrow & \P^{d-1}(R_n) \smallskip \\{}
[a_1 : \cdots : a_d] & \mapsto & [a_1 \mod \m^n: \cdots : a_d \mod \m^n].
\end{array}$$
Again $\sp_n$ is surjective and the preimage of any point $a \in 
\P^{d-1} (R_n)$ is isomorphic to $R^{d-1}$ \emph{via}
$$\begin{array}{rcl}
H_{\piv_n(a)} & \longrightarrow & \sp_n^{-1}(a) \smallskip \\
x & \mapsto & [ \can_n(a) + \pi^n x ]
\end{array}$$
Similarly, given a second integer $m \geq n$,
the reduction modulo $\m^n$ defines a map $\sp_{m,n} : \P^{d-1}(R_m) 
\to \P^{d-1} (R_n)$.
This map is surjective and its fibres are all in bijection 
with $R_{m-n}^{d-1}$. It notably follows from this that:
\begin{equation}
\label{eq:cardP1}
\card \P^{d-1}(R_n) = q^{(d-1)(n-1)} \cdot \frac{q^d-1}{q-1}.
\end{equation}

\begin{prop}
\label{prop:descprojective}
The collection of applications $\sp_n$ induces a bijection:
$$\textstyle
\sp : \P^{d-1}(K) \longrightarrow \varprojlim_n \P^{d-1}(R_n)$$ 
where the codomain is by definition the set of all sequences $(x_n)_{n 
\geq 1}$ with $x_n \in \P^{d-1}(R_n)$ and $\sp_{n+1,n}(x_{n+1}) = x_n$ 
for all $n$.
\end{prop}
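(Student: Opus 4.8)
The plan is to exhibit an explicit inverse to $\sp$ by using the canonical representatives $\can(a)$ introduced earlier, together with the compatibility that $\can$ enjoys with reduction modulo $\m^n$. First I would check injectivity: if $a, b \in \P^{d-1}(K)$ have $\sp_n(a) = \sp_n(b)$ for all $n$, then in particular $\piv_n(a) = \piv_n(b)$ for all $n$, hence $\piv(a) = \piv(b)$ (the pivot is stable once $n \geq 1$ since it is read off the residue field), and moreover $\can_n(a) = \can_n(b)$ in $\Sp^{d-1}(R_n)$ because the canonical representative is characterised by having its pivot coordinate equal to $1$, a condition preserved under reduction. Thus $\can(a) \equiv \can(b) \pmod{\m^n}$ coordinatewise for every $n$, so $\can(a) = \can(b)$, whence $a = b$. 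Equivalently, one can simply invoke Proposition~\ref{prop:distproj}: $\dist(a,b) = |\can(a) - \can(b)|$, and if $\sp_n(a) = \sp_n(b)$ for all $n$ then $\can(a)$ and $\can(b)$ agree modulo every $\m^n$, forcing $\dist(a,b) = 0$.

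For surjectivity, let $(x_n)_{n \geq 1}$ be a compatible sequence with $x_n \in \P^{d-1}(R_n)$. Since $\sp_{n+1,n}(x_{n+1}) = x_n$, the pivot indices $\piv_n(x_n)$ are all equal to a common value $j$ (again because $\piv_n$ only depends on the residue-field coordinates, which are determined already by $x_1$). Consider the canonical representatives $\can_n(x_n) \in \Sp^{d-1}(R_n)$, each having $j$-th coordinate equal to $1$. Compatibility of the $x_n$ plus uniqueness of the representative with $j$-th coordinate $1$ shows that $\can_n(x_n)$ is the reduction modulo $\m^n$ of $\can_{n+1}(x_{n+1})$. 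Hence the coordinates of $\can_n(x_n)$ form, for each fixed index $i$, a coherent sequence in $\varprojlim_n R_n = R$; this defines a vector $\hat a = (a_1, \ldots, a_d) \in R^d$ with $a_j = 1$, which therefore lies in $\Sp^{d-1}(K)$. Setting $a = [\hat a] \in \P^{d-1}(K)$, one has $\sp_n(a) = [\can_n(x_n)] = x_n$ for all $n$ by construction, so $\sp$ is surjective. I should also remark (or it is clear from the construction) that $\hat a$ is in fact $\can(a)$.

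The only mild subtlety — and the step I would be most careful about — is the stabilisation of the pivot: one must observe that $\piv_n(x_n)$ does not depend on $n$, which follows because an element of $R_n$ is invertible if and only if its image in $k$ is nonzero, so the pattern of invertible-versus-noninvertible coordinates of $x_n$ is read off entirely from $x_1 = \sp_{n,1}(x_n)$. Once this is in place the rest is the standard identification $\varprojlim_n R_n = R$ applied coordinatewise. Everything here is elementary; there is no real obstacle, and the proposition is essentially a bookkeeping statement packaging Proposition~\ref{prop:distproj} and the definitions of $\sp_n$, $\piv_n$ and $\can_n$. One could alternatively phrase the whole argument topologically: $\sp$ is a continuous injection from the compact space $\P^{d-1}(K)$ to the Hausdorff space $\varprojlim_n \P^{d-1}(R_n)$ with dense image, hence a homeomorphism — but since the statement only claims a bijection, the explicit inverse above is the cleanest route.
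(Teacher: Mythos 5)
Your proposal is correct and follows essentially the same route as the paper: the author also constructs the inverse map by observing that the compatibility condition forces the pivot indices to stabilise, that the canonical representatives $\can_n(x_n)$ are compatible under reduction modulo $\m^n$, and that their limit yields an element of $\Sp^{d-1}(K)$ whose class maps back to the given sequence. Your separate injectivity argument via $\can(a)=\can(b)$ (or Proposition~\ref{prop:distproj}) is just a repackaging of the paper's check that the two composites are the identity, so there is nothing further to add.
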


\begin{proof}
We define a function $\varphi$ in the opposite direction as follows. Let 
$(x_n)_{n \geq 1}$ be a sequence in $\varprojlim_n \P^{d-1}(R_n)$. 
The compatibility condition implies that $\piv_n(x_n)$ is constant and 
that $\can_n(x_n)$ is the reduction modulo $\m^n$ of $\can_{n+1}(x_{n+1})$.
Therefore the sequence $(\can_n(x_n))_{n \geq 1}$ defines an element
$x \in R^d$. The $\piv_1(x_1)$-th coordinate of $x$ is $1$, so that
$x \in \Sp^{d-1}(K)$. Let define $\varphi((x_n)_{n \geq 1})$ as the
class of $x$ in the projective space $\P^{d-1}(K)$.
It is clear that $\varphi \circ \sp$ and $\sp \circ \varphi$
are both the identity, implying that $\sp$ is a bijection as claimed.
\end{proof}

\paragraph{Algebraic version of the distance.}

For $a, b \in \P^{d-1}(K)$, define $v(a,b)$ as the supremum in $\N \cup 
\{+\infty\}$ of the set consisting of $0$ and the positive integers $n$ 
for which $\sp_n(a) = \sp_n(b)$. 
Thanks to Proposition~\ref{prop:descprojective}, $v(a,b) = +\infty$ if 
and only if $a = b$.

\begin{prop}
\label{prop:dist}
Given $a, b \in \P^{d-1}(K)$, we have $\dist(a,b) = q^{-v(a,b)}$.
\end{prop}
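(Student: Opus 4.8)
The plan is to show the two inequalities $\dist(a,b) \leq q^{-v(a,b)}$ and $\dist(a,b) \geq q^{-v(a,b)}$, using the explicit description of $\dist$ furnished by Proposition~\ref{prop:distproj} together with Corollary~\ref{cor:distproj}, which tells us that $\dist(a,b)$ is the maximal norm of a $2{\times}2$ minor of the matrix built from \emph{any} pair of reduced representatives. The key observation is that $v(a,b) \geq n$ means exactly $\sp_n(a) = \sp_n(b)$ in $\P^{d-1}(R_n)$, and by definition of $\sp_n$ this says that some representative of $a$ and some representative of $b$ become equal modulo $\m^n$ after rescaling by a unit of $R_n$; equivalently, $\can(a)$ and $\can(b)$ (which have the same pivot, since the pivot is detected already at level $1 \leq n$) are congruent modulo $\m^n$.

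First I would record the case $a = b$, where $v(a,b) = +\infty$ by Proposition~\ref{prop:descprojective} and $\dist(a,b) = 0 = q^{-\infty}$, so the formula holds. Now assume $a \neq b$ and set $v = v(a,b) \in \N$. For the inequality $\dist(a,b) \leq q^{-v}$: since $\sp_v(a) = \sp_v(b)$, the canonical representatives $\can(a)$ and $\can(b)$ share the same pivot $j$ and satisfy $\can(a) \equiv \can(b) \pmod{\m^v}$ coordinatewise — indeed, $\sp_v$ identifies $\can(a) \bmod \m^v$ with $\can_v(\sp_v(a))$ and likewise for $b$, and these agree. Hence $|\can(a) - \can(b)| \leq q^{-v}$, and Proposition~\ref{prop:distproj} gives $\dist(a,b) = |\can(a) - \can(b)| \leq q^{-v}$.

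For the reverse inequality $\dist(a,b) \geq q^{-v}$, equivalently $\dist(a,b) > q^{-(v+1)}$: here I use that $v$ is \emph{maximal}, so $\sp_{v+1}(a) \neq \sp_{v+1}(b)$. If the pivots of $\can(a)$ and $\can(b)$ already differ, then by the argument in the proof of Proposition~\ref{prop:distproj} (the pivot-mismatch case) we get $\dist(a,b) = 1 \geq q^{-v}$ and we are done. Otherwise the pivots agree, and $\sp_{v+1}(a) \neq \sp_{v+1}(b)$ forces $\can(a) \not\equiv \can(b) \pmod{\m^{v+1}}$, i.e.\ $|\can(a) - \can(b)| > q^{-(v+1)}$, hence $|\can(a) - \can(b)| \geq q^{-v}$; again Proposition~\ref{prop:distproj} concludes. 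The only mildly delicate point — and the place where I would be most careful — is verifying that $\sp_n(a) = \sp_n(b)$ really is equivalent to the congruence $\can(a) \equiv \can(b) \pmod{\m^n}$; this rests on the fact that $\can_n$ is a well-defined section of $\sp_n$ compatible with reduction modulo powers of $\m$ (so that $\can_n(\sp_n(a)) = \can(a) \bmod \m^n$), which follows from the uniqueness clause in the definition of $\can_n$ together with the compatibility already noted in Proposition~\ref{prop:descprojective}. Once that dictionary is in place, the proof is a short translation between the metric and the inverse-limit descriptions of $\P^{d-1}(K)$.
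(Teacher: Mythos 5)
Your proof is correct and follows essentially the same route as the paper: both rest on the dictionary $\sp_n(a)=\sp_n(b) \iff \can(a)\equiv\can(b) \pmod{\m^n}$ (justified, as you do, by the uniqueness of canonical representatives and their compatibility with reduction) and then conclude via Proposition~\ref{prop:distproj}; you merely unpack the two inequalities that the paper leaves implicit. The only cosmetic caveat is the case $v(a,b)=0$, where the phrase ``since $\sp_v(a)=\sp_v(b)$'' is vacuous, but there the upper bound $\dist(a,b)\leq 1$ is trivial, so nothing is lost.
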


\begin{proof}
Note that $\sp_n(a) = \sp_n(b)$ if and only if $a$ and $b$ have the
same image in $\P^{d-1}(S_n)$, \emph{i.e.} if and only if
$\can(a) \equiv \can(b) \pmod {\m^n}$.
The proposition now follows from Proposition \ref{prop:distproj}.
\end{proof}

More generally, given $a, b \in \P^{d-1}(S_n)$, we define $v_n(a,
b)$ as 
the biggest integer $v \in \{0, 1, \ldots, n\}$ for which $\sp_{n,v}(a) 
= \sp_{n,v}(b)$ (with the convention that $v=0$ always satisfies the 
above requirement). As above $v_n(a,b) = n$ if and only if $a = b$.
A torsion analogue of Proposition~\ref{prop:distproj} then holds.

\begin{prop}
\label{prop:distn}
For $a, b \in \P^{d-1}(R_n)$, we can write:
$$\can_n(b) - \can_n(a) = \pi^{v_n(a,b)} \cdot u$$
where $u$ lies in $R_n^d$ and has at least one invertible coordinate.
\end{prop}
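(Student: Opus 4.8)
The statement is a torsion-level version of Proposition~\ref{prop:distproj}, so the natural strategy is to imitate that proof working modulo $\m^n$ instead of exactly. Write $\can_n(a) = (a_1, \ldots, a_d)$ and $\can_n(b) = (b_1, \ldots, b_d)$ in $R_n^d$, set $j = \piv_n(a)$, $j' = \piv_n(b)$ and $v = v_n(a,b)$. The goal is to show that $\can_n(b) - \can_n(a)$ has the form $\pi^v u$ with $u \in R_n^d$ having an invertible coordinate; equivalently, that every coordinate of $\can_n(b) - \can_n(a)$ is divisible by $\pi^v$ (in $R_n$) and that at least one coordinate is divisible by $\pi^v$ but not $\pi^{v+1}$.

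First I would dispose of the case $j \neq j'$: here I claim $v = 0$, and that $\can_n(b) - \can_n(a)$ already has an invertible coordinate. Indeed $a_j = 1$ is invertible while, since $j$ is the \emph{first} invertible coordinate of $b$ and $j \neq j'$, we have $j < j'$ forcing $b_j \in \m$, or $j > j'$ in which case $a_{j'} \in \m$ while $b_{j'} = 1$; either way one of the coordinates $a_j - b_j$, $a_{j'} - b_{j'}$ is a unit, so $\sp_1(a) \neq \sp_1(b)$ and $v = 0$, and we may take $u = \can_n(b) - \can_n(a)$. So from now on assume $j = j'$, hence $a_j = b_j = 1$ and the $j$-th coordinate of $\can_n(b) - \can_n(a)$ vanishes. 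By definition of $v = v_n(a,b)$ we have $\sp_{n,v}(a) = \sp_{n,v}(b)$, i.e. $\can_n(a) \equiv \can_n(b) \pmod{\m^v}$ — using that the $\piv$ index is constant along the tower, so that $\can_v$ of both reductions is the common reduction of $\can_n(a)$ and $\can_n(b)$ — which gives that \emph{every} coordinate of $\can_n(b) - \can_n(a)$ lies in $\m^v$. Thus we may write $\can_n(b) - \can_n(a) = \pi^v u$ for some $u \in R_n^d$ (here I should note that lifting to $R$, dividing by $\pi^v$, and reducing back mod $\m^{n}$ is well-defined on the relevant coordinates, or simply work with a representative in $R^d$ throughout and reduce at the end). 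It remains to check $u$ has an invertible coordinate, i.e. that not \emph{all} coordinates of $\can_n(b) - \can_n(a)$ lie in $\m^{v+1}$. If $v = n$ then $\can_n(a) = \can_n(b)$, $u = 0$, and there is nothing to prove (or the statement is read vacuously); so assume $v < n$. By maximality of $v$ we have $\sp_{n,v+1}(a) \neq \sp_{n,v+1}(b)$, which by the same description of $\can_{v+1}$ means $\can_n(a) \not\equiv \can_n(b) \pmod{\m^{v+1}}$, i.e. some coordinate of the difference is divisible by $\pi^v$ exactly — so the corresponding coordinate of $u$ is a unit.

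The only genuinely delicate point, and the one I would write carefully, is the interplay between $v_n(a,b)$, the stability of the pivot $\piv$ under $\sp_{n,v}$, and the translation of "$\sp_{n,v}(a) = \sp_{n,v}(b)$" into "$\can_n(a) \equiv \can_n(b) \bmod \m^v$". This is exactly the torsion analogue of the observation opening the proof of Proposition~\ref{prop:dist}, namely that two projective points over $R_m$ agree after reduction to $R_v$ iff their canonical representatives are congruent mod $\m^v$; it uses that $\piv_n(a) = \piv_v(\sp_{n,v}(a))$ (the first invertible coordinate mod $\m^n$ is still the first invertible coordinate mod $\m^v$, since invertibility is detected mod $\m$) together with uniqueness of the canonical representative. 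Everything else is the same linear-algebra bookkeeping as in Corollary~\ref{cor:distproj}, just carried out in $R_n$ rather than in $R$. I do not expect any real obstacle beyond being scrupulous about divisibility statements in the non-domain ring $R_n$, which is why lifting to $R^d$ and only projecting at the end is the cleanest route.
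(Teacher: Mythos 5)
Your proof is correct, and it is exactly the ``simple adaptation of Proposition~\ref{prop:distproj}'' that the paper's one-line proof alludes to: the case split on $\piv_n(a)\neq\piv_n(b)$ versus $\piv_n(a)=\piv_n(b)$, plus the observation (as in Proposition~\ref{prop:dist}) that $\sp_{n,v}(a)=\sp_{n,v}(b)$ iff $\can_n(a)\equiv\can_n(b)\pmod{\m^v}$ because reduction preserves the pivot and canonical representatives are unique. The only cosmetic point is the case $v_n(a,b)=n$: the conclusion there is not vacuous but trivially true, since $\pi^n=0$ in $R_n$ and one may take any $u$ with an invertible coordinate.
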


\begin{proof}
It is a simple adaptation of the proof of Proposition~\ref{prop:distproj}.
\end{proof}

\subsection{Algebraic description of the universe}
\label{ssec:universealg}

Recall that we have defined in \S \ref{ssec:universe} the set $\Omega$ 
(our universe) consisting of $1$-Lipschitz functions $\P^{d-1}(K) \to 
R^d$. The aim of this subsection is to revisit constructions and results 
of \S \ref{ssec:universe} with an algebraic point of view. We recall 
that we have defined specialization maps $\sp_n : \P^{d-1}(K) \to 
\P^{d-1}(S_n)$ in \S \ref{ssec:projective} and, similarly, that we 
have introduced previously the projections $p_n : R^d \to R_n^d$ taking 
a tuple to its reduction modulo $\m^n$.

The algebraic analogue of the existence of $\psi^\an_n(f)$ can be
formulated as follows.

\begin{prop}
\label{prop:psin}
Let $f \in \Omega$. For all positive integer $n$, there exists a unique 
function $\psi_n(f) : \P^{d-1}(R_n) \to R_n^d$ making the following 
diagram commutative:

\begin{equation}
\label{eq:diagfn}
\raisebox{-0.5\height}{%
\begin{tikzpicture}[xscale=3,yscale=1.5]
\node (P) at (0,0) { \ph $\P^{d-1}(K)$ };
\node (R) at (1,0) { \ph $R^d$ };
\node (Pn) at (0,-1) { \ph $\P^{d-1}(R_n)$ };
\node (Rn) at (1,-1) { \ph $R_n^d$ };
\draw[->] (P)--(R) node[above,midway,scale=0.8] { $f$ };
\draw[->] (P)--(Pn) node[left,midway,scale=0.8] { $\sp_n$ };
\draw[->] (R)--(Rn) node[right,midway,scale=0.8] { $p_n$ };
\draw[->] (Pn)--(Rn) node[above,midway,scale=0.8] { $\psi_n(f)$ };
\end{tikzpicture}}
\end{equation}
\end{prop}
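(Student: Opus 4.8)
The plan is to exploit the concrete description of the universe already in hand, namely that $f \in \Omega$ is a $1$-Lipschitz function $\P^{d-1}(K) \to R^d$, together with Proposition~\ref{prop:approx} (existence and uniqueness of $\psi^\an_n(f) \in \Omega^\an_n$) and the identification of $\P^{d-1}(R_n)$ as the set of closed balls of radius $q^{-n}$ in $\P^{d-1}(K)$ via $\sp_n$. The point of the statement is purely that the analytic approximation $\psi^\an_n(f)$, which a priori is a function on $\P^{d-1}(K)$ that is constant on balls of radius $q^{-n}$ and takes values in $S_n^d$, descends to a genuine function $\psi_n(f) : \P^{d-1}(R_n) \to R_n^d$, and that this descended function is \emph{the} unique one fitting into the square~\eqref{eq:diagfn}.

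First I would establish \textbf{existence}. By Proposition~\ref{prop:approx}, $\psi^\an_n(f) \in \Omega^\an_n$ is constant on each closed ball of radius $q^{-n}$ in $\P^{d-1}(K)$. Since $\sp_n : \P^{d-1}(K) \to \P^{d-1}(R_n)$ is surjective (as recalled in \S\ref{ssec:projectivealg}) and its fibres are exactly those closed balls of radius $q^{-n}$ (by Proposition~\ref{prop:dist}, $\sp_n(a) = \sp_n(b) \iff \dist(a,b) \leq q^{-n}$), the function $\psi^\an_n(f)$ factors uniquely through $\sp_n$ as a set-theoretic map $\bar g : \P^{d-1}(R_n) \to S_n^d$. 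Composing $S_n \hookrightarrow R \twoheadrightarrow R_n$ — which is a bijection since $S_n$ is a complete set of representatives of $R/\m^n$ — I get a function $\psi_n(f) : \P^{d-1}(R_n) \to R_n^d$. Commutativity of~\eqref{eq:diagfn} is then the chase: for $a \in \P^{d-1}(K)$, $p_n(f(a))$ is the class mod $\m^n$ of $f(a)$, which by the very definition of $\psi^\an_n(f)$ in the proof of Proposition~\ref{prop:approx} equals the class of $\psi^\an_n(f)(a)$, and the latter is $\psi_n(f)(\sp_n(a))$ by construction.

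Next, \textbf{uniqueness}. Suppose $\psi_n(f)$ and $\psi_n'(f)$ both make~\eqref{eq:diagfn} commute. Since $\sp_n$ is surjective, for any $\bar a \in \P^{d-1}(R_n)$ pick $a \in \P^{d-1}(K)$ with $\sp_n(a) = \bar a$; then $\psi_n(f)(\bar a) = p_n(f(a)) = \psi_n'(f)(\bar a)$, so the two agree. This is immediate and needs no Lipschitz input — surjectivity of $\sp_n$ alone forces uniqueness, which is worth noting.

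I do not expect a serious obstacle here: the content is bookkeeping, and everything needed (surjectivity of $\sp_n$, the fibre description, and Proposition~\ref{prop:approx}) is already available. The only mild subtlety — the ``hard part'', such as it is — is being careful that the well-definedness of the descent genuinely uses that $\psi^\an_n(f)$ is constant on balls of radius $q^{-n}$ (i.e. the Lipschitz property of $f$, which is what forces $\psi^\an_n(f) \in \Omega^\an_n$ rather than merely a function into $S_n^d$), and that the target identification $S_n^d \cong R_n^d$ is the canonical one so that commutativity of the square reads off correctly. In fact one can present the whole proof as: \emph{$\psi_n(f)$ is the map induced on fibres of $\sp_n$ by $p_n \circ f$, which is well defined precisely because $p_n \circ f$ is constant on each fibre of $\sp_n$ — this being exactly the statement $\psi^\an_n(f) \in \Omega^\an_n$ from Proposition~\ref{prop:approx}.}
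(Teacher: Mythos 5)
Your proof is correct, and it is precisely the route the paper mentions and then declines: the author remarks that the proposition ``can be derived from Proposition~\ref{prop:approx}'' but prefers an independent, purely algebraic argument. The paper's proof bypasses $\psi^\an_n(f)$ entirely: if $\sp_n(a)=\sp_n(b)$ then $\dist(a,b)\leq q^{-n}$, hence $\Vert f(a)-f(b)\Vert_\infty\leq q^{-n}$ by the $1$-Lipschitz hypothesis, hence $p_n\circ f(a)=p_n\circ f(b)$; so $p_n\circ f$ is constant on the fibres of $\sp_n$ and factors through it, with uniqueness from surjectivity of $\sp_n$ exactly as in your argument. Your version reuses the analytic approximation already established, which is economical in that sense, but it pays for it by routing through the non-canonical set of representatives $S_n$ and the identification $S_n^d\cong R_n^d$ (a point you handle correctly, and rightly flag as the only delicate step); the underlying content is the same Lipschitz estimate, which in your proof is hidden inside Proposition~\ref{prop:approx} rather than invoked directly. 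The paper's direct proof is shorter, representative-free, and fits the algebraic framework of Section~\ref{sec:algebraic}, while yours makes the link between the analytic and algebraic descriptions of the universe explicit --- both are valid, and your observation that uniqueness needs only surjectivity of $\sp_n$ matches the paper.
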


\begin{rem}
Roughly speaking, the function $\psi_n(f)$ encodes the action of 
$\psi^\an_n(f)$ on closed balls of radius $q^{-n}$.
\end{rem}

\begin{proof}[Proof of Proposition \ref{prop:psin}]
The proposition can be derived from Proposition \ref{prop:approx}.
We nevertheless prefer giving an independant and completely algebraic
proof.

Let $a, b \in \P^{d-1}(K)$ with $\sp_n(a) = \sp_n(b)$. By definition 
$\dist(a,b) \leq q^{-n}$. Thus $|f(a) - f(b)| \leq q^{-n}$ 
because $f$ is assumed to be $1$-Lipschitz. Thus $f(a)$ and $f(b)$ lie 
in the same ball of radius $q^{-n}$ or, equivalently, $p_n \circ f(a) = 
p_n \circ f(b)$. In other words, for $x \in \P^{d-1}(K)$, $p_n \circ 
f(x)$ depends only on $\sp_n(x)$. This implies the existence of the 
required mapping $f_n$. The unicity follows from the surjectivity of 
$\sp_n$.
\end{proof}

We emphasize that, we have not proved yet that $\psi_n(f)$ is 
$1$-Lipschitz. Indeed this notion has not been defined yet. Here
is the definition we will use.

\begin{deftn}
A function $f : \P^{d-1}(S_n) \to R_n^d$ is $1$-Lipschitz 
if for all $a,b \in \P^{d-1}(S_n)$:
$$f(a) \equiv f(b) \pmod{\m^{v_n(a,b)}}$$
where the above condition means that all the coordinates of $f(b_n)-
f(a_n)$ lie in $\m^{n-v_n(a,b)}$.

We denote by $\Omega_n$ their set.
\end{deftn}

We notice that $\L_1$ is the of \emph{all} set theoretical functions 
$\P^{d-1}(k) \to k^d$. Moreover, two integers $m \geq n$ together with a 
function $f_m \in \Omega_m$, it is easily checked that there exists a 
unique function $\psi_{m,n}(f_{n+1}) \in \L_n$ making the diagram below 
commutative:

\medskip

\noindent\hfill
\begin{tikzpicture}[xscale=4,yscale=1.5]
\node (P) at (0,0) { \ph $\P^{d-1}(R_m)$ };
\node (R) at (1,0) { \ph $R_m^d$ };
\node (Pn) at (0,-1) { \ph $\P^{d-1}(R_n)$ };
\node (Rn) at (1,-1) { \ph $R_n^d$ };
\draw[->] (P)--(R) node[above,midway,scale=0.8] { $f_m$ };
\draw[->] (P)--(Pn) node[left,midway,scale=0.8] { $\sp_{m,n}$ };
\draw[->] (R)--(Rn) node[right,midway,scale=0.8] { $p_{m,n}$ };
\draw[->] (Pn)--(Rn) node[above,midway,scale=0.8] { $\psi_{m,n}(f_m)$ };
\end{tikzpicture}
\hfill\null

\begin{lem}
\label{lem:comppsin}
The function $\psi_n$ takes its values in $\L_n$.
\end{lem}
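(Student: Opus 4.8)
The plan is to show that $\psi_n(f)$ satisfies the Lipschitz condition of the above definition, i.e.\ that for all $a, b \in \P^{d-1}(R_n)$ we have $\psi_n(f)(a) \equiv \psi_n(f)(b) \pmod{\m^{v_n(a,b)}}$. First I would reduce this to a statement about $\P^{d-1}(K)$ using the surjectivity of $\sp_n$: pick lifts $\tilde a, \tilde b \in \P^{d-1}(K)$ with $\sp_n(\tilde a) = a$ and $\sp_n(\tilde b) = b$. The key compatibility to establish is that $v_n(a,b) = \min(n, v(\tilde a, \tilde b))$, where $v$ is the $\P^{d-1}(K)$-valuation from \S\ref{ssec:projectivealg}; this follows directly from the definitions of $v_n$ and $v$ together with the fact that $\sp_{n,k} \circ \sp_n = \sp_k$ for $k \leq n$.

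Next, set $v = v_n(a,b)$ and choose the lifts so that moreover $v(\tilde a, \tilde b) \geq v$ (possible by the previous paragraph, replacing $\tilde b$ by a lift of $b$ agreeing with $\tilde a$ to order $v$ if necessary). By Proposition~\ref{prop:dist}, $\dist(\tilde a, \tilde b) = q^{-v(\tilde a,\tilde b)} \leq q^{-v}$. Since $f$ is $1$-Lipschitz on $\P^{d-1}(K)$, we get $|f(\tilde a) - f(\tilde b)| \leq q^{-v}$, which means $f(\tilde a) \equiv f(\tilde b) \pmod{\m^v}$ coordinate-wise. Applying $p_n$ and using the commutativity of diagram~\eqref{eq:diagfn} — namely $\psi_n(f)(a) = \psi_n(f)(\sp_n(\tilde a)) = p_n(f(\tilde a))$ and likewise for $b$ — and the fact that $p_n$ is compatible with reduction modulo $\m^v$ (for $v \leq n$), we conclude $\psi_n(f)(a) \equiv \psi_n(f)(b) \pmod{\m^v}$, which is exactly the desired Lipschitz inequality.

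The only mild obstacle is the bookkeeping in the identity $v_n(a,b) = \min(n, v(\tilde a, \tilde b))$ and making sure the lifts can be chosen to realize $v(\tilde a, \tilde b) \geq v_n(a,b)$; but this is immediate from Proposition~\ref{prop:descprojective}, since a compatible sequence of liftings in the projective limit can be built starting from any lift of $a$ and matching $b$'s coordinates in $\P^{d-1}(R_v)$. Everything else is a direct unwinding of the definition of $1$-Lipschitz for functions on $\P^{d-1}(R_n)$ together with the already-established dictionary between $\dist$, $v$, and congruences of canonical representatives (Propositions~\ref{prop:distproj} and~\ref{prop:dist}). No genuinely new idea is needed beyond the observation that $\psi_n(f)$ is, by construction, a "compression" of $f$ that can only contract distances.
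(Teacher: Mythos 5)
Your proof is correct and follows essentially the same route as the paper: lift $a,b$ to $\P^{d-1}(K)$, identify $v_n(a,b)$ with $\min(n,v(\tilde a,\tilde b))$ (the paper simply treats the case $a=b$ separately and notes the equality $v(\tilde a,\tilde b)=v_n(a,b)$ for arbitrary lifts otherwise), apply the $1$-Lipschitz property of $f$ via Proposition~\ref{prop:dist}, and push the congruence down through $p_n$. The extra care you take in choosing lifts realizing $v(\tilde a,\tilde b)\geq v_n(a,b)$ is harmless but unnecessary, since the identity already guarantees it for any lifts.
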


\begin{proof}
Let $f \in \Omega$.
Let $a_n, b_n \in \P^{d-1}(S_n)$. If $a_n = b_n$, we have $v_n(a_n,b_n) 
= n$ and there is nothing to prove. Otherwise, pick $a,b \in \P^{d-1}(K)$
such that $\sp_n(a) = a_n$ and $\sp_n(b) = b_n$. Then $\psi_n(f)$ maps
$a_n$ and $b_n$ to $p_n(f(a))$ and $p_n(f(b))$ respectively. We then
need to prove that $p_n(f(a)) - p_n(f(b)) = p_n(f(a) - f(b))$ has all
its coordinates in $\m^{v_n(a_n,b_n)}$. But, using that $f$ is
$1$-Lipschitz, we get:
$$\Vert f(a) - f(b) \Vert_\infty \leq \dist(a,b) = q^{-v(a,b)} = 
q^{-v_n(a_n,b_n)}$$
and we are done.
\end{proof}

One important benefit of working with $\L_n$ instead of $\L^\an_n$ is 
that the former is naturally endowed with algebraic structures. 
Precisely, one easily checks that $\L_n$ is a $R$-module for the usual
operations (addition and scalar multiplication) on functions and that
the projection maps $\psi_n : \Omega \to \L_n$ and $\psi_{m,n} : \Omega_m
\to \Omega_n$ are all $R$-linear. 

The $\psi_{m,n}$'s are actually the exact algebraic analogue of the 
functions $\psi^\an_{m,n}$'s introduced in \S \ref{ssec:universe}. In 
order to state an analogue of Proposition \ref{prop:fibrepsi}, we 
introduce the additive group $G_{n+1}$ consisting of functions 
$\P^{d-1}(R_{n+1}) \to k^d$ and let it act on $\L_{n+1}$ by

$$\forall g_{n+1} \in \mathcal G_{n+1}, \quad
\forall f_{n+1} \in \mathcal L_{n+1}, \quad
g_{n+1} \bullet f_{n+1} = f_{n+1} + \pi^n g_{n+1}.$$

\begin{prop}
\label{prop:torsor}
The map $\psi_{n+1,n} : \L_{n+1} \to \L_n$ is surjective.
Moreover the action of $G_{n+1}$ stabilizes each fibre of $\psi_{n+1,n}$
and induces on it a free and transitive action.
\end{prop}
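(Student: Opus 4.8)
The plan is to verify three things in turn: that $\psi_{n+1,n}$ is surjective, that the $G_{n+1}$-action preserves fibres, and that this action is free and transitive on each fibre.

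First I would treat surjectivity. Given $f_n \in \L_n$, I want to produce $f_{n+1} \in \L_{n+1}$ with $\psi_{n+1,n}(f_{n+1}) = f_n$. The natural candidate is to pull back along the canonical lifts: for $a_{n+1} \in \P^{d-1}(R_{n+1})$ set $f_{n+1}(a_{n+1}) = \iota\big(f_n(\sp_{n+1,n}(a_{n+1}))\big)$, where $\iota : R_n^d \to R_{n+1}^d$ is the lift sending a class represented by an expression of the shape~\eqref{eq:representatives} (with exponents up to $n-1$) to the same expression viewed in $R_{n+1}$. One checks this makes the defining square commute, so it remains to see $f_{n+1} \in \L_{n+1}$, i.e. that it is $1$-Lipschitz in the sense of the definition of $\Omega_{n+1}$. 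For $a_{n+1}, b_{n+1} \in \P^{d-1}(R_{n+1})$ with $v := v_{n+1}(a_{n+1}, b_{n+1})$, if $v = n+1$ the two points are equal and there is nothing to prove; if $v \leq n$, then $\sp_{n+1,n}(a_{n+1})$ and $\sp_{n+1,n}(b_{n+1})$ agree up to level $\min(v,n) = v$ in $\P^{d-1}(R_n)$, so since $f_n$ is $1$-Lipschitz their $f_n$-images are congruent mod $\m^{v}$; applying $\iota$ preserves this congruence (it is mod $\m^{n}$-compatible and $v \leq n$), giving the required Lipschitz bound for $f_{n+1}$.

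Next, the action preserves fibres. This is immediate: for $g_{n+1} \in G_{n+1}$ and $f_{n+1} \in \L_{n+1}$ we have $g_{n+1} \bullet f_{n+1} = f_{n+1} + \pi^n g_{n+1}$, and since $p_{n+1,n}(\pi^n g_{n+1}(x)) = 0$ in $R_n^d$ (every coordinate of $\pi^n g_{n+1}(x)$ lies in $\m^n$), the composite $p_{n+1,n} \circ (g_{n+1}\bullet f_{n+1})$ equals $p_{n+1,n}\circ f_{n+1}$; by uniqueness in the commutative-diagram characterisation of $\psi_{n+1,n}$, this forces $\psi_{n+1,n}(g_{n+1}\bullet f_{n+1}) = \psi_{n+1,n}(f_{n+1})$. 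One should also double-check that $g_{n+1}\bullet f_{n+1}$ really lands in $\L_{n+1}$: the term $\pi^n g_{n+1}$ is automatically $1$-Lipschitz because all its values lie in $\m^n R_{n+1}^d$, so $\pi^n g_{n+1}(a) \equiv \pi^n g_{n+1}(b) \pmod{\m^{v}}$ holds for every $v \leq n$, hence for $v = v_{n+1}(a,b)$ (which is $\leq n$ whenever $a \neq b$), and for $a = b$ it is trivial; since $\L_{n+1}$ is an $R$-module, the sum stays in $\L_{n+1}$.

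Finally, freeness and transitivity. Fix $f_n \in \L_n$ and two elements $f_{n+1}, f'_{n+1}$ of its fibre. Their difference $h := f'_{n+1} - f_{n+1}$ lies in $\L_{n+1}$ and satisfies $p_{n+1,n}\circ h = 0$, so every value $h(x)$ has all coordinates in $\m^n \subset R_{n+1}$; writing $h(x) = \pi^n \bar g(x)$ with $\bar g(x) \in k^d$ well-defined, this exhibits $h = \pi^n g$ for a unique function $g : \P^{d-1}(R_{n+1}) \to k^d$, i.e. $g \in G_{n+1}$ with $g \bullet f_{n+1} = f'_{n+1}$; uniqueness of $g$ gives freeness, existence gives transitivity. (Note no Lipschitz constraint is imposed on $g$ itself, consistently with $G_{n+1}$ being all functions to $k^d$, which matches Proposition~\ref{prop:fibrepsi} in the analytic picture.) I expect the main obstacle to be bookkeeping the level indices correctly in the surjectivity step — precisely, checking that $v_{n+1}$ on $\P^{d-1}(R_{n+1})$ and $v_n$ on $\P^{d-1}(R_n)$ interact as $\min(v_{n+1}, n)$ under $\sp_{n+1,n}$, which is where Proposition~\ref{prop:distn} (or rather its definition-level input) is used; everything else is formal manipulation of the commutative diagrams and the module structure.
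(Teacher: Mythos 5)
Your proof is correct and follows the same route the paper takes: the paper's proof is a one-line assertion (surjectivity via $\sp_{n+1,n}$, the torsor properties "easily checked"), and your argument — lifting $f_n\circ\sp_{n+1,n}$ through a digit-wise section of $p_{n+1,n}$ and verifying the Lipschitz bound, then identifying fibre differences with $\pi^n g$ for a unique $g:\P^{d-1}(R_{n+1})\to k^d$ — is exactly the expected filling-in of those checks. In particular your bookkeeping of $v_{n+1}$ versus $v_n$ under $\sp_{n+1,n}$ and the observation that $g$ itself need not be Lipschitz are the right details.
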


\begin{rem}
Recall that an action of a group $G$ over a space $X$ is free and
transitive if, given two any points $x,y \in X$, there always exists
a unique element $g \in G$ such that $y = gx$. This notably implies 
that, for all $x \in X$, the map $h_x : G \to X$, $g \mapsto gx$ is
a bijection. In particular $X$ is either empty or in bijection with 
$G$.
\end{rem}

\begin{proof}[Proof of Proposition \ref{prop:torsor}]
The surjectivity of $\psi_{n+1,n}$ comes from that of $\sp_{n+1,n}$
while the claimed properties on the action of $G_{n+1}$ are easily
checked.
\end{proof}

\begin{cor}
The set $\Omega_n$ has cardinality:
$$\card \Omega_n = q^{d \cdot \frac{q^d-1}{q-1} \cdot \frac{q^{n(d-1)}-1}{q^{d-1}-1}}.$$
\end{cor}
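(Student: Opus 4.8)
The plan is to prove the formula by induction on $n$, using Proposition \ref{prop:torsor} to pass from level $n$ to level $n+1$. First I would dispose of the base case $n=1$: recall that $\Omega_1$ is the set of \emph{all} functions $\P^{d-1}(k)\to k^d$, so that $\card\Omega_1 = q^{d\cdot\card\P^{d-1}(R_1)}$; since $R_1 = k$, formula \eqref{eq:cardP1} gives $\card\P^{d-1}(R_1) = \frac{q^d-1}{q-1}$, which agrees with the claimed expression at $n=1$ (the factor $\frac{q^{(d-1)}-1}{q^{d-1}-1}$ being $1$).

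For the inductive step I would invoke Proposition \ref{prop:torsor}, which asserts that $\psi_{n+1,n}:\Omega_{n+1}\to\Omega_n$ is surjective and that $G_{n+1}$ acts freely and transitively on each of its fibres. Hence every fibre is non-empty and in bijection with $G_{n+1}$, so $\card\Omega_{n+1} = \card\Omega_n\cdot\card G_{n+1}$. As $G_{n+1}$ is the additive group of functions $\P^{d-1}(R_{n+1})\to k^d$, its cardinality is $\card G_{n+1} = q^{d\cdot\card\P^{d-1}(R_{n+1})}$, and \eqref{eq:cardP1} evaluates $\card\P^{d-1}(R_{n+1}) = q^{(d-1)n}\cdot\frac{q^d-1}{q-1}$.

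Telescoping, this gives $\card\Omega_n = q^{\,d\cdot\sum_{m=1}^n\card\P^{d-1}(R_m)}$, and it only remains to compute the exponent. Substituting \eqref{eq:cardP1} and summing the geometric progression $\sum_{m=1}^n q^{(d-1)(m-1)} = \frac{q^{n(d-1)}-1}{q^{d-1}-1}$ yields $\sum_{m=1}^n\card\P^{d-1}(R_m) = \frac{q^d-1}{q-1}\cdot\frac{q^{n(d-1)}-1}{q^{d-1}-1}$, which is precisely the exponent in the statement. There is no genuine obstacle in this argument; the only points demanding a little care are correctly reading off $\card\P^{d-1}(R_{n+1})$ from the shifted index in \eqref{eq:cardP1} and keeping the bounds of the geometric sum straight (its $m=1$ term being $q^0=1$, matching the base case).
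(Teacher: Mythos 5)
Your proof is correct and follows essentially the same route as the paper: the recursion $\card\Omega_{n+1}=\card\Omega_n\cdot\card G_{n+1}$ coming from Proposition~\ref{prop:torsor}, combined with Eq.~\eqref{eq:cardP1} and induction. You merely make explicit the base case ($\Omega_1$ being all functions $\P^{d-1}(k)\to k^d$) and the geometric-sum telescoping that the paper leaves to the reader.
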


\begin{proof}
Proposition \ref{prop:torsor} implies:
$$\card \Omega_n = \card \Omega_{n-1} \cdot \card G_n =
\card \Omega_{n-1} \cdot q^{d \: \card \P^{d-1}(S_n)}.$$
The claimed formula follows by induction using Eq.~\eqref{eq:cardP1}.
\end{proof}

\begin{prop}
\label{prop:descOmega}
The mapping $\psi : f \mapsto (\psi_n(f))_{n \geq 1}$
induces a bijection between $\Omega$ and 
$\varprojlim_n \L_n$ where the latter is by definition the set 
of all sequences $(f_n)_{n \geq 1}$ with $f_n \in \L_n$ and 
$\psi_{n+1,n}(f_{n+1}) = f_n$ for all $n$.
\end{prop}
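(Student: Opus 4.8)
The plan is to exhibit explicit mutually inverse maps between $\Omega$ and $\varprojlim_n \L_n$, exactly paralleling the proof of Proposition~\ref{prop:descprojective}. First I would check that $\psi$ indeed lands in the projective limit: this is immediate from the compatibility of the commutative diagrams~\eqref{eq:diagfn}, since $p_{n+1,n} \circ p_{n+1} = p_n$ and $\sp_{n+1,n} \circ \sp_{n+1} = \sp_n$ force $\psi_{n+1,n}(\psi_{n+1}(f)) = \psi_n(f)$ by the uniqueness clause of Proposition~\ref{prop:psin}. So $\psi$ is a well-defined map $\Omega \to \varprojlim_n \L_n$, and by Lemma~\ref{lem:comppsin} each $\psi_n(f)$ really lies in $\L_n$.

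Next I would construct the inverse. Given a compatible sequence $(f_n)_{n\geq 1}$ with $f_n \in \L_n$, I want to produce a $1$-Lipschitz function $f : \P^{d-1}(K) \to R^d$. Fix $a \in \P^{d-1}(K)$. The elements $f_n(\sp_n(a)) \in R_n^d$ form, by the compatibility $\psi_{n+1,n}(f_{n+1}) = f_n$ together with the diagram defining $\psi_{n+1,n}$ (which gives $p_{n+1,n}(f_{n+1}(\sp_{n+1}(a))) = f_n(\sp_n(a))$), a compatible sequence in $\varprojlim_n R_n^d = R^d$; call its limit $f(a) \in R^d$. This defines a function $f : \P^{d-1}(K) \to R^d$, and one checks $p_n \circ f = f_n \circ \sp_n$ by construction, so $\psi_n(f) = f_n$ for all $n$ by the uniqueness in Proposition~\ref{prop:psin}; hence the two composites are identities on the nose, once we know $f \in \Omega$.

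The one genuine verification — and the point I would flag as the only real content beyond bookkeeping — is that the $f$ just built is $1$-Lipschitz, i.e. actually lies in $\Omega$. For $a, b \in \P^{d-1}(K)$ with $a \neq b$, set $v = v(a,b)$, so that $\dist(a,b) = q^{-v}$ by Proposition~\ref{prop:dist} and $\sp_v(a) = \sp_v(b)$. Then $p_v(f(a)) = f_v(\sp_v(a)) = f_v(\sp_v(b)) = p_v(f(b))$, which says exactly that $f(a) \equiv f(b) \pmod{\m^v}$, i.e. $\Vert f(a) - f(b)\Vert_\infty \leq q^{-v} = \dist(a,b)$. (When $a = b$ there is nothing to prove.) Thus $f$ is $1$-Lipschitz and $f \in \Omega$. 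Finally, I would observe that the two constructions are inverse to each other: starting from $f \in \Omega$, applying $\psi$ and then the reconstruction recovers $f$ because both have the same reductions $p_n \circ f$ and points of $K^d$ are determined by their reductions mod $\m^n$; starting from $(f_n)$, reconstructing $f$ and then applying $\psi$ gives back $(f_n)$ by the uniqueness clause noted above. Hence $\psi$ is a bijection.
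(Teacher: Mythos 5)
Your proof is correct and follows essentially the same route as the paper: construct the inverse by assembling the compatible values $f_n(\sp_n(a))$ into an element of $\varprojlim_n R_n^d = R^d$, check the resulting $f$ is $1$-Lipschitz, and verify the two constructions are mutually inverse. You actually spell out the $1$-Lipschitz verification via $v(a,b)$ and Proposition~\ref{prop:dist}, which the paper leaves implicit, and the $v(a,b)=0$ case is harmless since $f$ takes values in $R^d$.
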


\begin{proof}
We define the inverse bijection of $\psi$. Let $(f_n)_{n \geq 1}$ be 
a sequence in $\varprojlim_n \L_n$. Let $a \in \P^{d-1}(K)$. The 
sequence of $f_n \circ \sp_n(a)$ defines an element in $\varprojlim_n
R_n^d$, \emph{i.e.} an element $f(a)$ in $R^d$ by completeness of $R$. 
This yields a function $f : \P^{d-1}(K) \to R^d$ making all the diagrams

\medskip

\noindent\hfill
\begin{tikzpicture}[xscale=3,yscale=1.5]
\node (P) at (0,0) { \ph $\P^{d-1}(K)$ };
\node (R) at (1,0) { \ph $R^d$ };
\node (Pn) at (0,-1) { \ph $\P^{d-1}(R_n)$ };
\node (Rn) at (1,-1) { \ph $R_n^d$ };
\draw[->] (P)--(R) node[above,midway,scale=0.8] { $f$ };
\draw[->] (P)--(Pn) node[left,midway,scale=0.8] { $\sp_n$ };
\draw[->] (R)--(Rn) node[right,midway,scale=0.8] { $p_n$ };
\draw[->] (Pn)--(Rn) node[above,midway,scale=0.8] { $f_n$ };
\end{tikzpicture}
\hfill\null

\smallskip

\noindent
commutative. One derives from this that $f$ is $1$-Lipschitz, 
\emph{i.e.} $f \in \Omega$. Moreover it is apparently an antecedent by
$\psi$ of the sequence $(f_n)_{n \geq 1}$. Finally, starting with $f \in
\Omega$, the above construction applied with $f_n = \psi_n(f)$ clearly
rebuilds $f$. This concludes the proof.
\end{proof}

\begin{prop}
\label{prop:uniformLn}
For all $n$ and all subset $E \subset \L_n$, we have:
$$\P[\psi_n(\omega) \in E] = \frac{\card E}{\card \L_n}.$$
In other words the map $\psi_n$ sends the probability measure on 
$\Omega$ to the uniform distribution on $\L_n$.
\end{prop}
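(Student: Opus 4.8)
The plan is to exploit the algebraic structure already laid out, namely the description of $\Omega$ as the inverse limit $\varprojlim_n \L_n$ (Proposition \ref{prop:descOmega}) together with the torsor structure on the transition maps (Proposition \ref{prop:torsor}), and to recall that the measure on $\Omega$ we care about is its Haar measure as a compact abelian group. The key point is that the Haar measure on a compact group is the unique translation-invariant probability measure, so it suffices to check that the pushforward measure $(\psi_n)_*\P$ on $\L_n$ is invariant under the translation action of the group $\L_n$ on itself; being a probability measure on a finite group, it is then forced to be the uniform (counting) measure, which gives exactly the stated formula.

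First I would observe that $\psi_n : \Omega \to \L_n$ is a group homomorphism: it is $R$-linear by the discussion following Lemma \ref{lem:comppsin}, in particular additive. Next I would argue that $\psi_n$ is surjective; this follows by iterating the surjectivity of the transition maps $\psi_{m+1,m}$ from Proposition \ref{prop:torsor} and using that $\Omega = \varprojlim_m \L_m$ surjects onto each $\L_m$ (Proposition \ref{prop:descOmega}). Now fix $h \in \L_n$ and pick any $\tilde h \in \Omega$ with $\psi_n(\tilde h) = h$. For a subset $E \subset \L_n$ we have $\psi_n^{-1}(h + E) = \tilde h + \psi_n^{-1}(E)$, because $\psi_n$ is additive: indeed $\psi_n(\omega) \in h + E \iff \psi_n(\omega - \tilde h) \in E \iff \omega - \tilde h \in \psi_n^{-1}(E)$. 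Since the Haar measure $\P$ on $\Omega$ is invariant under translation by $\tilde h$, this gives $\P[\psi_n(\omega) \in h + E] = \P[\psi_n(\omega) \in E]$. Hence the probability measure $E \mapsto \P[\psi_n(\omega) \in E]$ on the finite group $\L_n$ is translation-invariant, so it assigns equal mass to all singletons; since there are $\card \L_n$ of them, each has mass $1/\card \L_n$, and therefore $\P[\psi_n(\omega) \in E] = \card E / \card \L_n$ for every $E$.

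I do not expect a serious obstacle here; the only point requiring a little care is making sure the cited facts really do apply, namely that $\Omega$ is a compact \emph{abelian} group with $\P$ its Haar measure (stated in \S \ref{ssec:universe}), that $\psi_n$ is a group homomorphism (additivity suffices, and this is the $R$-linearity noted after Lemma \ref{lem:comppsin}), and that $\psi_n$ is onto. Alternatively, if one prefers to avoid invoking uniqueness of Haar measure on $\Omega$ itself, one can run the same invariance argument at finite level using the explicit description of $\Omega$ as $\prod_{i \geq 1} G^\an_i$ with the product of uniform measures from \S \ref{ssec:universe}, or — entirely within the algebraic picture — use Proposition \ref{prop:torsor} to factor $\psi_n$ through the finite-level maps $\Omega_m \to \L_n$ and check uniformity by a straightforward induction on $m$, each step multiplying fibre sizes by $\card G_{m+1}$ uniformly. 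Either way the computation is routine once the invariance is in place.
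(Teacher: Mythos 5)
Your argument is correct and is essentially the paper's own proof: the paper also uses that $\psi_n$ is a group homomorphism, picks a lift $h \in \Omega$ of a difference $g_n - f_n$ (implicitly using surjectivity), and invokes translation-invariance of the Haar measure on $\Omega$ to conclude all fibres of $\psi_n$ have equal measure, which is just another phrasing of your translation-invariance of the pushforward. Your write-up is slightly more explicit about surjectivity and the finite-group conclusion, but the route is the same.
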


\begin{proof}
Let $f_n, g_n \in \L_n$. Pick $h \in \Omega$ mapping to $g_n - f_n$ 
under $\psi_n$.
Taking advantage of the fact that $\psi_n$ is a group homomorphism, 
we derive that the translation by $h$ sends the fibre over $f_n$ to
the fibre over $g_n$. The properties of the Haar measure consequently
implies that all the fibres of $\psi_n$ have the same measure. The
proposition follows from this.
\end{proof}

\subsection{Reformulation of the main Theorem}
\label{ssec:statementalg}

We fix a positive integer $n$. 
Following the construction of Section \ref{sec:results}, given a 
function $f \in \L_n$, we define a Besikovitch set $N(f) \subset
R_n^d$ by:
$$N(f) = \bigcup_{a \in \P^{d-1}(S_n)} S_a(f)
\quad \text{with} \quad
S_a(f) = \big\{ t \cdot \can_n(a) + f(a) \::\: t\in R_n \big\}.$$
where we recall that $\can_n(a) \in R^d$ denote the unique representative
of $a$ whose first invertible coordinate is equal to $1$ (see \S 
\ref{ssec:projectivealg}). 
The relationship between the above construction and that of Section
\ref{sec:results} is made precise by the following lemma.

\begin{lem}
With the notations of \S \ref{ssec:statement}, we have:
$$X_n(f) = q^{-nd} \cdot \card N(\psi_n(f))$$
for all $f \in \Omega$.
\end{lem}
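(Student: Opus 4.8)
The plan is to relate the $(q^{-n})$-neighbourhood $N_n(f)$ of $N(f) \subset R^d$ to the torsion Besikovitch set $N(\psi_n(f)) \subset R_n^d$ via the projection $p_n : R^d \to R_n^d$, and then invoke Proposition~\ref{prop:neighbourhood}. By Proposition~\ref{prop:neighbourhood} we already know $X_n(f) = \mu(N_n(f)) = q^{-nd} \cdot \card p_n(N(f))$, so the entire content of the lemma is the set-theoretic identity $p_n(N(f)) = N(\psi_n(f))$. Once that is established, the formula $X_n(f) = q^{-nd} \cdot \card N(\psi_n(f))$ follows immediately.

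First I would unwind the definitions on both sides. We have $N(f) = \bigcup_{a \in \P^{d-1}(K)} S_a(f)$ with $S_a(f) = \{t \cdot \can(a) + f(a) : t \in R\}$, and $N(\psi_n(f)) = \bigcup_{\bar a \in \P^{d-1}(R_n)} S_{\bar a}(\psi_n(f))$ with $S_{\bar a}(\psi_n(f)) = \{\bar t \cdot \can_n(\bar a) + \psi_n(f)(\bar a) : \bar t \in R_n\}$. Since $p_n$ commutes with the union, it suffices to show that for each $a \in \P^{d-1}(K)$, setting $\bar a = \sp_n(a)$, we have $p_n(S_a(f)) = S_{\bar a}(\psi_n(f))$, and that every $\bar a \in \P^{d-1}(R_n)$ arises this way (which is exactly the surjectivity of $\sp_n$, recorded in \S\ref{ssec:projectivealg}). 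For the fibrewise equality, the key points are: (1) $p_n(\can(a)) = \can_n(\sp_n(a))$, because reduction mod $\m^n$ of the distinguished representative of $a$ (whose first invertible coordinate is $1$) is the distinguished representative of $\sp_n(a)$; (2) $p_n(f(a)) = \psi_n(f)(\bar a)$ by the very definition of $\psi_n(f)$ via the commutative diagram~\eqref{eq:diagfn}; and (3) $p_n$ maps $R$ onto $R_n$, so as $t$ runs over $R$, $\bar t = p_n(t)$ runs over $R_n$, and $p_n(t \cdot \can(a) + f(a)) = \bar t \cdot \can_n(\bar a) + \psi_n(f)(\bar a)$.

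The only mild subtlety — and the step I would be most careful about — is point (1), the compatibility of the canonical representatives $\can$ and $\can_n$ under reduction; but this is immediate from the explicit descriptions in \S\ref{ssec:projective} and \S\ref{ssec:projectivealg}, since $\piv_n(\sp_n(a)) = \piv(a)$ (the first invertible coordinate is detected already modulo $\m$, hence modulo $\m^n$) and the normalizing scalar is unchanged by reduction. Everything else is a routine chase through the definitions and the surjectivity statements already available. Assembling: $p_n(N(f)) = \bigcup_{a} p_n(S_a(f)) = \bigcup_{a} S_{\sp_n(a)}(\psi_n(f)) = \bigcup_{\bar a \in \P^{d-1}(R_n)} S_{\bar a}(\psi_n(f)) = N(\psi_n(f))$, and combining with Proposition~\ref{prop:neighbourhood} yields $X_n(f) = q^{-nd} \cdot \card N(\psi_n(f))$ as claimed.
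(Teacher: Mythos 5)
Your proposal is correct and follows essentially the same route as the paper: both reduce the lemma, via Proposition~\ref{prop:neighbourhood}, to the set identity $p_n(N(f)) = N(\psi_n(f))$, proved by chasing the definitions of $\psi_n$, the surjectivity of $\sp_n$ and $p_n$, and the compatibility of canonical representatives. Your segment-wise formulation $p_n(S_a(f)) = S_{\sp_n(a)}(\psi_n(f))$ is just a mild repackaging of the paper's two-inclusion argument, with the useful bonus of making the compatibility $p_n(\can(a)) = \can_n(\sp_n(a))$ explicit.
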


\begin{proof}
Set $f_n = \psi_n(f)$.
Proposition \ref{prop:neighbourhood} shows that:
$$X_n(f) = q^{-nd} \cdot \card p_n(N(f)).$$
It is then enough to show that $N(f_n)$ and $p_n(N(f))$ have
the same cardinality. We will actually show that these two sets are
equal.

Pick first $x \in N(f)$. Thus $x \in S_a(f)$ for some $a \in 
\P^{d-1}(K)$ from what we derive that $p_n(x) \in S_{\sp_n(a)}(f_n)$.
Therefore $p_n(x) \in N(f_n)$ and we have proved that $p_n(N(f))
\subset N(f_n)$. Conversely, take $x_n \in N(f_n)$, so that $x_n 
= t_n \cdot \can_n(a_n) + f_n(a)$ for some $a_n \in \P^{d-1}(S_n)$ and some
$t_n \in R_n$. Consider now $a \in \P^{d-1}(K)$ and $t \in R$ such that 
$\sp_n(a) = a_n$ and $p_n(t) = t_n$. Clearly $x = t\cdot\can_n(a) + f(a)$ sits
in $N(f)$ and, coming back to the definition of $\psi_n$ (\emph{cf}
Proposition \ref{prop:psin}), we observe that $p_n(x) = x_n$. 
Thus $x_n \in p_n(N(f))$ and we have proved the reverse inclusion.
\end{proof}

Combining the above lemma with Proposition \ref{prop:uniformLn},
we find that our main theorem can then be rephrased as follows.

\begin{theo}
\label{theo:mainalg}
Let $(u_n)_{n \geq 0}$ be the sequence defined by the recurrence:
$$\begin{array}{rcl}
u_0 = 1 & ; & \displaystyle
u_{n+1} = 1 - \left( 1 - \frac{u_n}{q^{d-1}} \right)^{q^{d-1}}.
\end{array}$$
and set:
$$u'_n = 1 - (1 - u_n)^{1 + q^{-1} + \cdots + q^{-(d-1)}}.$$
Then, for any position integer $n$:
$$\frac 1 {\card \L_n} \cdot \sum_{f_n \in \L_n} \card N(f_n)
= q^{nd} u'_n.$$
\end{theo}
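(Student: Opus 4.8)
The plan is to compute the average cardinality $\frac{1}{\card\L_n}\sum_{f_n\in\L_n}\card N(f_n)$ by linearity of expectation over the points of $R_n^d$. For a fixed point $x\in R_n^d$, I would estimate the probability $p_{x,n}$ that $x\in N(f_n)$ when $f_n$ is drawn uniformly from $\L_n$; then $q^{-nd}\cdot\E[\card N(f_n)]=\frac{1}{q^{nd}}\sum_{x}p_{x,n}$. By translation-invariance and the homogeneity of the setup, $p_{x,n}$ should depend only on mild data about $x$ — heuristically, whether $x$ lies in the "core" $\m^n$-part or which stratum it sits in; but more naturally one proceeds \emph{recursively in $n$}. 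The point $x\in N(f_n)$ iff there exists a direction $a\in\P^{d-1}(R_n)$ with $x-f_n(a)\in R_n\cdot\can_n(a)$, i.e. $x$ lies on the line through $f_n(a)$ with direction $\can_n(a)$.

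The key device is the torsor structure from Proposition~\ref{prop:torsor}: drawing $f_{n+1}\in\L_{n+1}$ uniformly amounts to drawing $f_n=\psi_{n+1,n}(f_{n+1})\in\L_n$ uniformly and then, independently and uniformly, a function $g_{n+1}\in G_{n+1}$ (functions $\P^{d-1}(R_{n+1})\to k^d$), setting $f_{n+1}=f_n+\pi^n g_{n+1}$. So I would set up a recursion: condition on $f_n$, then analyze how the extra randomness $\pi^n g_{n+1}$ affects whether a given point $x_{n+1}\in R_{n+1}^d$ lands in $N(f_{n+1})$. The crucial combinatorial fact (which is essentially Proposition~\ref{prop:distn}) is that the fibre of $\sp_{n+1,n}:\P^{d-1}(R_{n+1})\to\P^{d-1}(R_n)$ over any $a_n$ has $q^{d-1}$ elements, so each direction at level $n$ "splits" into $q^{d-1}$ directions at level $n+1$, and the values $g_{n+1}$ takes on these $q^{d-1}$ lifts are independent uniform elements of $k^d$. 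This is where the exponent $q^{d-1}$ in the recurrence $u_{n+1}=1-(1-u_n/q^{d-1})^{q^{d-1}}$ comes from.

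Concretely, I expect the argument to run as follows. First reduce to computing, for a fixed reduced point (or more precisely a fixed residue class / pivot stratum), the probability $u_n$ that a "generic" point is hit; the definition $u_0=1$ reflects that at level $0$ everything is in $N$. For the inductive step: a point $x_{n+1}$ lying above $x_n\in N(f_n)$ — say $x_n$ is on the line of direction $a_n$ through $f_n(a_n)$ — is in $N(f_{n+1})$ iff for at least one of the $q^{d-1}$ lifts $a_{n+1}$ of $a_n$, the refined line (direction $\can_{n+1}(a_{n+1})$, base point $f_n(a_{n+1})+\pi^n g_{n+1}(a_{n+1})$) passes through $x_{n+1}$; each such event has conditional probability $u_n/q^{d-1}$ by the induction hypothesis applied one level up and the uniformity of $g_{n+1}$, and the $q^{d-1}$ events are independent. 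Hence the conditional survival probability is $(1-u_n/q^{d-1})^{q^{d-1}}$, giving $u_{n+1}$. Then the passage from $u_n$ to $u'_n=\E[X_n]/1$ accounts for the fact that $N(f_n)\subset R_n^d$ is a union over directions, and different pivot strata of $\P^{d-1}$ contribute with the geometric weights $1,q^{-1},\dots,q^{-(d-1)}$ — the inclusion–exclusion / independence-across-strata step that converts the per-direction hitting probability $u_n$ into the total measure $1-(1-u_n)^{1+q^{-1}+\cdots+q^{-(d-1)}}$, using $\card\P^{d-1}(R_n)$ from Eq.~\eqref{eq:cardP1} and the fibre structure of $\sp_n$.

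The main obstacle, I expect, is making rigorous the two independence claims that drive the recursion: (i) that the $q^{d-1}$ refinement events at a given direction are genuinely independent given $f_n$ — this requires checking that the relevant coordinates of $g_{n+1}$ on the distinct lifts $a_{n+1}$ are distinct coordinates of the random function and hence independent, which is where Proposition~\ref{prop:distn} (the pivot of a lift, and the $\pi^{v_n}$-scaling) must be used carefully to see that "being hit" at level $n+1$ really only constrains the value of $g_{n+1}$ at that one lift; and (ii) the across-strata independence needed to pass from $u_n$ to $u'_n$, i.e. that the events "$x$ is hit by a direction with pivot $j$" for varying $j$ combine multiplicatively in the complement. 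Both are believable from the torsor picture but will require a somewhat delicate bookkeeping of pivots, canonical representatives, and which coordinate of $f$ a given line-membership condition actually pins down. Once those independence statements are established, the recursion $u_0=1$, $u_{n+1}=1-(1-u_n/q^{d-1})^{q^{d-1}}$ and the formula for $u'_n$ follow by a routine induction, and summing $q^{nd}u'_n$ over the implicit count yields $\frac{1}{\card\L_n}\sum_{f_n}\card N(f_n)=q^{nd}u'_n$.
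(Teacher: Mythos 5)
Your overall strategy --- fix a point $x$, compute the probability that $x\in N(f_n)$ under the uniform measure on $\L_n$ using the product/torsor structure, and obtain the recurrence for $u_n$ from the fact that each direction class splits into $q^{d-1}$ classes at the next level with independent uniform refinements --- is genuinely different from the paper's proof, and it can be made to work. The paper instead expands $\card N(f_n)$ by inclusion--exclusion over \emph{all} finite subsets $A\subset\P^{d-1}(R_n)$ of directions, computes $\E[C_A]$ exactly (Proposition~\ref{prop:directionalmean}), counts the subsets $A$ with a prescribed height function (Proposition~\ref{prop:heightcount}), and recovers the recurrence for $u_n$ from a purely combinatorial resummation over height functions; your route replaces all of that bookkeeping by a branching-process computation of the pointwise hitting probability, which is arguably more transparent.

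However, the inductive step as you have written it would not close. First, conditioning on ``$x_n\in N(f_n)$, say on the line of direction $a_n$'' is not legitimate: $x_n$ may lie on the lines of many directions at level $n$, and the event $x_{n+1}\in N(f_{n+1})$ depends on the whole (random) set $D_n$ of such directions, not merely on its non-emptiness; given $f_n$, the conditional probability is $1-(1-q^{-(d-1)})^{q^{d-1}\,\card D_n}$, so a recursion in the single number $p_n=\Pr[x_n\in N(f_n)]$ does not exist. Second, the per-lift conditional probability given $f_n$ is $q^{-(d-1)}$, not $u_n/q^{d-1}$; the factor $u_n$ cannot appear in a top-down step from level $n$ to $n+1$. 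The repair is to run the induction per direction class from the coarse end: define $u_m$ as the conditional probability, given $b\in\P^{d-1}(R_{n-m})$ whose line passes through $x$ modulo $\pi^{n-m}$, that some refinement of $b$ to level $n$ yields a line through $x$. For each of the $q^{d-1}$ children $b'$ of $b$, the event that the line of $b'$ passes through $x$ modulo $\pi^{n-m+1}$ is the event that $g(b')$ falls in an explicit set of $q$ values of $k^d$ (this uses that $\can(b')$ reduces to $\can(b)$, Proposition~\ref{prop:distn}, and that one coordinate is $1$), hence has probability $q^{-(d-1)}$, independently across children; combined with the induction hypothesis for the subtree below $b'$ this gives $u_{m}=1-(1-u_{m-1}/q^{d-1})^{q^{d-1}}$. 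Finally the exponent $1+q^{-1}+\cdots+q^{-(d-1)}$ does not come from ``weights of pivot strata'': one aggregates over the $1+q+\cdots+q^{d-1}$ elements of $\P^{d-1}(k)$, each hit with probability $q^{-(d-1)}$ independently, getting $\Pr[x\in N(f_n)]=1-\bigl(1-u_{n-1}/q^{d-1}\bigr)^{1+q+\cdots+q^{d-1}}$, and then rewrites this as $1-(1-u_n)^{1+q^{-1}+\cdots+q^{-(d-1)}}=u'_n$ using the recurrence. With these corrections your approach yields the theorem; as stated, the key step conflates the two recursions and the claimed independence statement is attached to the wrong conditioning.
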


\section{Proofs}
\label{sec:proofs}

In this section, we give complete proofs of Theorem \ref{theo:main} and 
Theorem \ref{theo:conj2} or, more precisely, of their algebraic 
analogues, namely Theorem \ref{theo:conj2alg} and Theorem 
\ref{theo:mainalg} respectively.
The strategy of the proof of Theorem~\ref{theo:conj2alg} follows closely 
that of the real case (see \cite[Theorem~2]{green} or 
\cite[Proposition~6.4]{babichenko}): a clever use of the Cauchy--Schwartz
inequality reduces the proof to finding good estimations of the size
of the intersections of two segments. This is achieved by
counting the number of solutions of some affine congruences.

The proof of Theorem~\ref{theo:mainalg} basically follows the same idea 
of understading the size of the intersections of unit length segments. 
Several complications nonetheless occur. The most significant one is 
that we cannot restrict ourselves to $2$ by $2$ intersections but need 
to study $s$ by $s$ intersections for any integer $s \geq 2$. Roughly
speaking, using the inclusion-exclusion principle, we will write $X_n$ 
as an alternating sum:
\begin{equation}
\label{eq:incexc}
X_n = X_{n,1} - X_{n,2} + X_{n,3} - \cdots + (-1)^s X_{n,s} + \cdots.
\end{equation}
We will then compute the mean of $X_{n,s}$ for all $s$, put it into
the above formula and end up this way with the value of $\E[X_n]$.
We would like to insist on the fact that, although $X_n$ is rather small
(at least less than $1$), the random variables $X_{n,s}$'s --- and their
mean --- may take very large values when $n$ is large. For instance
$\E[X_{n,2}]$ goes to infinity when $n$ grows up. There are then many
compensations and the miracle is that we will be able to keep 
\emph{exact} values during all the computation and then simplify the 
result.

\subsection{Kakeya conjecture in dimension $2$}
\label{ssec:conjKakeya}

We fix a positive integer $n$ and an integer $\ell \in \llbracket
0,n \rrbracket$. Let $B$ be a $\ell$-Besikovitch set in $R_n^2$. 
Our aim is to prove that:
\begin{equation}
\label{eq:conj2alg}
\card B \geq q^{2(n-\ell)} \cdot \frac 1 {\frac{q-1}{q+1} \: n + 1}
\end{equation}
By definition $B$ contains a segment $S_a$ of length $q^{-\ell}$ and
direction $a$ for 
each $a \in \P^1(S_n)$. Let $\psi_a$ be the indicator function of 
$S_a$. Set $\psi = \sum_{a \in \P^1(S_n)} \psi_a$. Note that $\psi$
vanishes outside $B$.
Applying the Cauchy--Schwarz inequality with $\psi$ and the indicator
function of $B$, we then get:
$$\Bigg(\sum_{x \in R_n^2} \psi(x)\Bigg)^2
 \leq \card B \cdot \sum_{x \in R_n^2} \psi(x)^2.$$
Noting that $\psi_a^2 = \psi_a$ and $\sum_{x \in R_n^2} \psi_a(x) = 
\card S_a$, the above inequality rewrites:
\begin{equation}
\label{eq:ineqconj2}
\Big(\sum_a \card S_a\Big)^2
 \leq \card B \cdot \sum_{a,b} \, \card (S_a \cap S_b)
\end{equation}
where $a$ and $b$ run over $\P^1(R_n)$. Recall that, given $a, b \in
\P^1(R_n)$, we have defined in \S \ref{ssec:projectivealg} an integer 
$v_n(a,b)$ between $0$ and $n$.

\begin{lem}
\begin{enumerate}[(a)]
\setlength\itemsep{0pt}
\item For $a \in \P^1(R_n)$, we have $\card S_a = q^{n-\ell}$.
\item For $a,b \in \P^1(R_n)$, we have $\card (S_a \cap S_b) \in
\{0, q^{\min(n-\ell, v_n(a,b))}\}$.
\end{enumerate}
\end{lem}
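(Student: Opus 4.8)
The plan is to prove the two assertions separately, both by explicit computation with the canonical representatives $\can_n(a)$.

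For part (a), a segment $S_a$ of length $q^{-\ell}$ and direction $a$ has the form $\{ta + b : t \in \m^\ell\}$ for some fixed $b \in R_n^2$. Since $a \in \P^1(R_n)$, its canonical representative $\can_n(a)$ has an invertible coordinate, so multiplication by $\can_n(a)$ is injective on $R_n$; hence the map $t \mapsto t\can_n(a) + b$ is injective on $\m^\ell \subset R_n$. As $\m^\ell = \pi^\ell R_n$ has cardinality $q^{n-\ell}$, we get $\card S_a = q^{n-\ell}$.

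For part (b), I would write $S_a = \{s\can_n(a) + b_a : s \in \m^\ell\}$ and $S_b = \{t\can_n(b) + b_b : t \in \m^\ell\}$. A point lies in $S_a \cap S_b$ iff there exist $s,t \in \m^\ell$ with $s\can_n(a) - t\can_n(b) = b_b - b_a$. Suppose the intersection is nonempty and fix one solution $(s_0,t_0)$; then the set of solutions $(s,t)$ is a coset of the set of $(s,t) \in \m^\ell \times \m^\ell$ solving the homogeneous system $s\can_n(a) = t\can_n(b)$, and the cardinality of $S_a \cap S_b$ equals the number of such homogeneous solutions (the map $(s,t)\mapsto s\can_n(a)+b_a$ being injective on the solution set since $\can_n(a)$ has an invertible coordinate). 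Now Proposition~\ref{prop:distn} gives $\can_n(b) - \can_n(a) = \pi^{v} u$ with $v = v_n(a,b)$ and $u \in R_n^2$ having an invertible coordinate. I would analyze the homogeneous equation $s\can_n(a) = t\can_n(b)$ coordinate-wise: writing it as $(s-t)\can_n(a) = t\pi^v u$, one sees that $t\pi^v u$ must be a scalar multiple of $\can_n(a)$; looking at the coordinate where $u$ is invertible forces $t\pi^v \in \pi^{?}R_n$, and tracking the valuations shows the homogeneous solution set is parametrized by one free parameter ranging over $\pi^{\max(\ell, ?)}R_n / \pi^n$, giving exactly $q^{\min(n-\ell,\,v)}$ solutions. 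So $\card(S_a\cap S_b) \in \{0, q^{\min(n-\ell,v_n(a,b))}\}$.

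The main obstacle is the bookkeeping in part (b): one must carefully separate the cases $\piv_n(a) = \piv_n(b)$ and $\piv_n(a) \neq \piv_n(b)$, and within the homogeneous analysis keep track of how the constraint from the invertible coordinate of $u$ interacts with the constraint $s,t \in \m^\ell$, since the answer $\min(n-\ell, v_n(a,b))$ reflects precisely the competition between these two constraints. I expect this to reduce, after choosing coordinates so that $\piv_n(a)=1$, to solving a pair of congruences mod $\pi^n$ in the unknowns $s,t$, which is a routine but slightly delicate valuation computation.
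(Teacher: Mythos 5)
Your part (a) and the overall reduction in part (b) are exactly the paper's: parametrize each segment injectively using the pivot coordinate, identify $\card(S_a\cap S_b)$ with the number of solutions $(s,t)\in(\pi^\ell R_n)^2$ of an affine linear system, and observe that this number is either $0$ or equal to the number of solutions of the associated homogeneous system. Where you diverge is the final count, and that is precisely where your write-up stops being a proof: the exponents you need are left as ``$?$'' and the decisive valuation bookkeeping is only promised. The paper closes this step by noting (via a direct adaptation of Corollary~\ref{cor:distproj}) that the $2\times 2$ matrix with rows $\can_n(a)$ and $\can_n(b)$ is equivalent over $R_n$ to $\mathrm{Diag}(1,\pi^{v_n(a,b)})$; since an invertible change of variables preserves $(\pi^\ell R_n)^2$, the homogeneous system becomes $u'=0$ and $\pi^{v_n(a,b)}v'=0$ with $v'\in\pi^\ell R_n$, which immediately gives $q^{\,n-\max(\ell,\,n-v_n(a,b))}=q^{\min(n-\ell,\,v_n(a,b))}$ solutions, uniformly in all cases.

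Your alternative route via Proposition~\ref{prop:distn} does go through, so the gap is one of execution rather than strategy; here is the computation you deferred. Set $v=v_n(a,b)$ (the case $a=b$, i.e.\ $v=n$, being trivial). If $\piv_n(a)=\piv_n(b)=j$, the $j$-th coordinate of $\can_n(b)-\can_n(a)=\pi^v u$ vanishes, so the $j$-th coordinate of your equation $(s-t)\can_n(a)=t\pi^v u$ reads $s-t=0$; moreover $\pi^v u_j=0$ forces $u_j\in\m$ when $v<n$, so for $d=2$ the invertible coordinate of $u$ is the other one, and that coordinate of the equation gives $t\pi^v=0$, i.e.\ $t\in\pi^{\max(\ell,\,n-v)}R_n$, whence exactly $q^{\min(n-\ell,\,v)}$ solutions $(t,t)$. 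If $\piv_n(a)\neq\piv_n(b)$, then $v=0$ and, say, $\can_n(a)=(1,\alpha)$, $\can_n(b)=(\beta,1)$ with $\beta\in\m$; the system $s=t\beta$, $s\alpha=t$ yields $t(1-\alpha\beta)=0$ with $1-\alpha\beta$ invertible, so $s=t=0$ and there is a single solution, again $q^{\min(n-\ell,0)}$. Either carry out this case analysis explicitly or invoke the diagonalization as the paper does; as written, with the placeholders unresolved, part (b) is not yet proved.
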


\begin{proof}
\emph{(a)}~Recall that $S_a$ consists of points $m_t = t \cdot \can_n(a) + a'$
where $t$ runs over $\pi^\ell R_n$ and $a' \in R_n^2$ is fixed. We claim that
these points are pairwise distinct. Indeed remember the $\piv_n(a)$-th
coordinate of $\can_n(a)$ is equal to $1$. Consequently the $\piv_n(a)$-th
coordinate of $m_t$ is $t+c$ where $c \in R_n$ is some constant. Our 
claim then becomes clear and it follows from it that the map $\pi^\ell 
R_n \to S_a$, $t \mapsto m_t$ is bijective. Hence $\card S_a = q^{n-\ell}$.

\smallskip

\noindent
\emph{(b)}~Thanks to what we have just explained, there exists $a', b' \in
R_n$ for which the cardinality of $S_a \cap S_b$ is equal to the number 
of solutions of the equation:
$$u \cdot \can_n(a) + a' = v \cdot \can_n(b) + b'$$
where the unknown are $u$ and $v$ and run over $\pi^\ell R_n$. The number of
solutions of this affine system is either $0$ or equal to the number
of solutions of the associated homogeneous system, namely:
$$\left(\begin{matrix} u & v \end{matrix}\right) \cdot
\left(\begin{matrix} a_1 & a_2 \\ b_1 & b_2 \end{matrix} \right) = 0$$
where $\can(a) = (a_1, a_2)$ and $\can(b) = (b_1, b_2)$. Thanks to 
(a direct adaptation of) Corollary~\ref{cor:distproj}, the above square 
matrix is equivalent to the diagonal matrix $\text{Diag}(1, 
\pi^{v_n(a,b)})$. In other words there exists a linear change of basis
$(u,v) \mapsto (u',v')$ after which our system rewrites:
$$\left\{ 
\begin{array}{l} u' = 0 \\ \pi^{v_n(a,b)} v' = 0 \end{array} \right.
\qquad \emph{i.e.} \qquad
\left\{ 
\begin{array}{l} u' = 0 \\ \pi^{n-v_n(a,b)} \text{ divides } v' 
\end{array} \right.$$
It is now clear that this system has $q^{\min(n-\ell,v_n(a,b))}$ 
solutions in $(\pi^\ell R_n)^2$.
\end{proof}

Coming back to the inequality~\eqref{eq:ineqconj2}, we obtain:
\begin{equation}
\label{eq:minBn}
\card B \geq 
 \frac{\big(\card \P^1(R_n) \cdot q^{n-\ell}\big)^2}
      {\sum_{a,b} q^{v_n(a,b)}} =
 \frac{q^{4n-2\ell-2} \cdot (q+1)^2}
      {\sum_{a,b} q^{v_n(a,b)}}
\end{equation}
where $a$ and $b$ run over $\P^1(R_n)$. 
Now fix $a \in \P^1(R_n)$ and observe that the set of $b$'s in 
$\P^1(R_n)$ for which $v_n(a,b) \geq v$ is a fibre of $\sp_{n,v}$.
Thanks to the results of \S \ref{ssec:projectivealg}, there are 
$q^{n-v}$ of them if $v > 0$ and, according to our convention, there
are $\card \P^1(R_n) = q^{n-1} (q+1)$ of them when $v = 0$.
Therefore, when $a$ remains fixed, we obtain:
\begin{align*}
\sum_b q^{v_n(a,b)} 
 & = 2 q^n + \sum_{v=1}^{n-1} q^v \cdot (q^{n-v} - q^{n-v-1}) \medskip \\
 & = (n+1) \: q^n - (n-1) \: q^{n-1} = n \: q^{n-1} (q-1) + q^{n-1}(q+1)
\end{align*}
Summing up over all $a$, we get:
\begin{align*}
\sum_{a,b} q^{v_n(a,b)} 
 & = \card \P^1(R_n) \cdot \big(n \: q^{n-1} (q-1) + q^{n-1}(q+1)\big) \\
 & = q^{n-1} (q+1) \cdot \big(n \: q^{n-1} (q-1) + q^{n-1}(q+1)\big)
\end{align*}
and injecting this in~\eqref{eq:minBn}, we end up with Eq.~\eqref{eq:conj2alg}
and the proof is complete.

\subsection{Average size of a random Kakeya set}
\label{ssec:incexc}

We now focus on the proof of Theorem~\ref{theo:mainalg} (which is
equivalent to Theorem~\ref{theo:main} thanks to the results of
\S \ref{ssec:statementalg}).
We fix a positive integer $n$ and endow $\Omega_n$ with the uniform
distribution. Recall that to any function $f \in \Omega_n$, we have
attached the Kakeya set:
$$N(f) = \bigcup_{a \in \P^{d-1}(S_n)} S_a(f)
\quad \text{with} \quad
S_a(f) = \big\{ t \cdot \can_n(a) + f(a) \::\: t\in R_n \big\}.$$
Set $C(f) = \card N(f)$ and, given in addition a subset $A$ of 
$\P^{d-1}(S_n)$, define:
\begin{equation}
\label{eq:CA}
C_A(f) = \card \bigcap_{a \in A} S_a(f).
\end{equation}
This defines a family of random variables on $\Omega_n$ and the
value we want to compute is the mean of $C$.
The inclusion-exclusion principle readily implies:
$$C(f) = \sum_{A \subset \P^{d-1}(S_n)} (-1)^{1 + \card A} \cdot C_A(f)$$
from what we get:
\begin{equation}
\label{eq:EC}
\E[C] = \sum_{A \subset \P^{d-1}(S_n)} (-1)^{1 + \card A} \cdot \E[C_A].
\end{equation}

\begin{rem}
The random variables $X_{n,s}$ considered in the introduction of the
Section \ref{sec:proofs} are related to the $C_A$'s as follows:
$$X_{n,s} = q^{-nd} \cdot 
\sum_{\substack{A \subset \P^{d-1}(S_n)\\ \card A = s}} C_A.$$
\end{rem}

Our strategy is now clear: first, we compute the expected values of the 
$C_A$'s and second, we inject the obtained result in Eq.~\eqref{eq:EC}. 
The first step is achieved in \S \ref{sssec:directional} while the 
second is reached in \S \ref{sssec:sumup}. The first paragraph (\S 
\ref{sssec:height}) is devoted to work out one important notion on 
which the rest of the proof will be based.

\subsubsection{The height function}
\label{sssec:height}

For $i \in \{1, \ldots, n\}$, choose and fix a total order on 
$\P^{d-1}(S_i)$ in such a way that the implication:
\begin{equation}
\label{eq:condorder}
a < b \quad \Longrightarrow \quad \sp_{i,i-1}(a) < \sp_{i,i-1}(b)
\end{equation}
holds for $i \geq 2$ and $a,b \in \P^{d-1}(S_i)$. (We
recall that the specialization maps $\sp_{i,i-1}$ were defined in \S
\ref{ssec:projectivealg}.) These orders can be built inductively on
$i$. Indeed first choose any total order on $\P^{d-1}(S_1)$. Then
choose any total order on each fibre of $\sp_{2,1}$ and glue them
together in order to build a total order on $\P^{d-1}(S_2)$ making
the implication~\eqref{eq:condorder} true for $i = 2$. Now continue 
this way with $i = 3, \ldots, n$.

\begin{deftn}
Let $A$ be a subset of $\P^{d-1}(S_n)$ of cardinality $\ell + 1$.
The \emph{height function} of $A$ is the function:
$$\begin{array}{rcl}
h_A : \quad \llbracket 1, \ell \rrbracket 
  & \to & \llbracket 1, n \rrbracket \smallskip \\
j & \mapsto & n - v_n(a_j, a_{j-1})
\end{array}$$
where the $a_j$'s ($0 \leq j \leq \ell$) are the elements of $A$
sorted by increasing order.
\end{deftn}

It is sometimes convenient to extend the function $h_A$ by setting
$h_A(0) = n$.
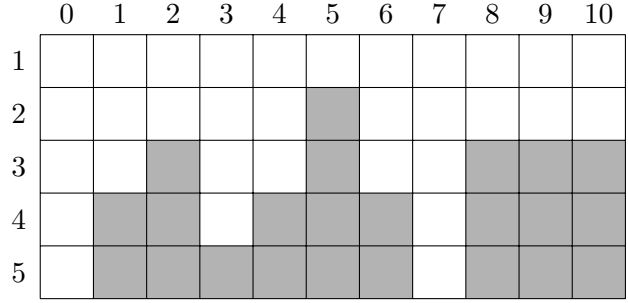
\begin{figure}
\noindent \hfill
\begin{tikzpicture}[scale=0.7,yscale=-1]
\begin{scope}[black!30]
\fill (1,3) rectangle (2,5);
\fill (2,2) rectangle (3,5);
\fill (3,4) rectangle (4,5);
\fill (4,3) rectangle (5,5);
\fill (5,1) rectangle (6,5);
\fill (6,3) rectangle (7,5);
\fill (7,5) rectangle (8,5);
\fill (8,2) rectangle (11,5);
\end{scope}
\draw (0,0) grid (11,5);
\node at (0.5,-0.4) { $0$ };
\node at (1.5,-0.4) { $1$ };
\node at (2.5,-0.4) { $2$ };
\node at (3.5,-0.4) { $3$ };
\node at (4.5,-0.4) { $4$ };
\node at (5.5,-0.4) { $5$ };
\node at (6.5,-0.4) { $6$ };
\node at (7.5,-0.4) { $7$ };
\node at (8.5,-0.4) { $8$ };
\node at (9.5,-0.4) { $9$ };
\node at (10.5,-0.4) { $10$ };
\node at (-0.4,0.5) { $1$ };
\node at (-0.4,1.5) { $2$ };
\node at (-0.4,2.5) { $3$ };
\node at (-0.4,3.5) { $4$ };
\node at (-0.4,4.5) { $5$ };
\end{tikzpicture}
\hfill\null

\caption{Representation of a height function (with $n = 5$ and $\ell = 10$)}
\label{fig:height}
\end{figure}
We will often represent a height function as a table with
$n$ rows (labeled from $1$ to $n$) and $\ell$ columns (labeled
from $1$ to $\ell$), where the cell $(i,j)$ is tinted in gray when $i
> h(j)$. Sometimes we will add a $0$-th column on the left with all
cells left white, in agreement with our convention $h_A(0) = n$.
Figure~\ref{fig:height} gives an example of such a representation.
It turns out that interesting informations can be read off immediately 
on this representation. For example, the numbers of white cells on the 
$i$-th row (including that on the $0$-th column) indicates the number of 
different values taken by the $\sp_{n,n+1-i}(a_j)$'s (for $0 \leq j \leq 
n$). More precisely, if $j < j'$, the equality $\sp_{n,n+1-i}(a_j) = 
\sp_{n,n+1-i}(a_{j'})$ holds if and only if the cells $(i,j+1), (i,j+2), 
\ldots, (i,j')$ are all left white.
This remark notably implies that:
\begin{equation}
\label{eq:distaj}
v_n(a_j, a_{j'}) = n - \max\big( h(j+1), h(j+2), \ldots, h(j') \big)
\end{equation}
provided that $j < j'$. 
In order to visualize even better the above properties, it can be 
helpful to fill the table of Figure~\ref{fig:height} by writing the
value $\sp_{n,n+1-i}(a_j)$ is the cell $(i,j)$. The three following
properties then hold:
\begin{enumerate}[(i)]
\setlength\itemsep{0pt}
\item \label{item:req1}
each cell contains an element which lies in the fibre of the 
element written just below (or, equivalently, each element of the
table specializes to the element written just above), 
\item \label{item:req2}
each gray cell contains the same element as the cell
immediately on the left,
\item \label{item:req3}
on each line, the elements are sorted in increasing order.
\end{enumerate}
Conversely remark that any filling of the table which satisfies the three
above requirements corresponds to one unique choice of $A$: it suffices
to read the $a_j$'s on the first line.
As we are going to explain now, this point of view will be particularly 
suitable for counting the number of subsets $A$ having a fixed height 
function.

\begin{deftn}
Let $h : \llbracket 1, \ell \rrbracket \to \llbracket 1, n \rrbracket$
be any function.

\noindent
The \emph{multiplicity function} of $h$ is the function $M(h) : 
\llbracket 1, \ell \rrbracket \to \N$
taking an integer $j \in \llbracket 1, \ell \rrbracket$ to the number
of indices $j' \in \llbracket 1, j \rrbracket$ for which:
$$h(j') = h(j) \quad \text{and} \quad
h(x) \leq h(j) \text{ for all } x \in \llbracket j, j' \rrbracket.$$
The \emph{weight function} of $h$ is the function $W(h) :
\llbracket 1, \ell \rrbracket \to \R$ defined by:
$$W(h)(j) = \frac 1{q^{d-1}} \cdot \frac {q^{d-1} - M(h)(j)}{M(h)(j)+1}.$$
The \emph{modified weight function} of $h$ is the function $W'(h) :
\llbracket 1, \ell \rrbracket \to \R$ defined by:
$$\begin{array}{r@{\hspace{0.5ex}}ll}
W'(h)(j) = 
& \displaystyle \frac 1{q^{d-1}} \cdot 
\frac {q^{d-1} - M(h)(j)}{M(h)(j)+1} & \text{if } h(j) \neq n \medskip \\
& \displaystyle \frac 1{q^{d-1}} \cdot 
\frac {1 + q + \cdots + q^{d-1} - M(h)(j)}{M(h)(j)+1} & \text{if } h(j) = n.
\end{array}$$
\end{deftn}

We emphasize that $j' = j$ is allowed in the definition of the 
multiplicity function, so that $M(h)(j)$ is always at least $1$. As an 
example, the values of the multiplicity function attached to the function 
$h_A$ represented on Figure~\ref{fig:height} are:
$${\renewcommand{\arraystretch}{1.2}
\begin{array}{|r||c|c|c|c|c|c|c|c|c|c|} 
\hline
j & 1 & 2 & 3 & 4 & 5 & 6 & 7 & 8 & 9 & 10 \\
\hline
h_A(j) & 3 & 2 & 4 & 3 & 1 & 3 & 5 & 2 & 2 & 2 \\
\hline
M(h_A)(j) & 1 & 1 & 1 & 1 & 1 & 2 & 1 & 1 & 2 & 3 \\
\hline
\end{array}}$$

\begin{prop}
\label{prop:heightcount}
Let $h : \llbracket 1, \ell \rrbracket \to \llbracket 1, n \rrbracket$
be a function.
The number of subset $A$ of $\P^{d-1}(S_n)$ (necessarily of cardinality 
$\ell+1$) whose height function is $h$ is:
\begin{equation}
\label{eq:heightcount}
\big(1 + q^{-1} + q^{-2} + \cdots + q^{-(d-1)}\big) \cdot q^{(d-1)n} \cdot
\prod_{j=1}^\ell \,\, W'(h)(j) \cdot q^{(d-1)\cdot h(j)}.
\end{equation}
\end{prop}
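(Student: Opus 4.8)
\textbf{Proof plan for Proposition~\ref{prop:heightcount}.}
The plan is to count the subsets $A$ with a given height function $h$ by building them up column by column, using the ``filled table'' description recalled just before the statement. Recall that a subset $A = \{a_0 < a_1 < \cdots < a_\ell\}$ with height function $h$ corresponds bijectively to a filling of the $n \times \ell$ table (plus a $0$-th column) by the values $\sp_{n,n+1-i}(a_j)$ in cell $(i,j)$, subject to the three requirements~\eqref{item:req1}--\eqref{item:req3}: each entry specializes to the one above it, each gray cell repeats the entry immediately to its left, and each row is increasing. The gray cells are completely forced, so the free choices are exactly the white cells; and because of requirement~\eqref{item:req1} it is natural to fill the table one column at a time, starting from the bottom row $i=n$ (which records $\sp_{n,1}(a_j) \in \P^{d-1}(S_1)$) and moving up to row $i=1$ (which records $a_j$ itself).

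First I would handle the $0$-th column: its $n$ white cells form a single compatible chain $\sp_{n,1}(a_0), \sp_{n,2}(a_0), \ldots, \sp_{n,n}(a_0) = a_0$ through the tower $\P^{d-1}(S_1) \leftarrow \cdots \leftarrow \P^{d-1}(S_n)$, and by the fibre counts of \S\ref{ssec:projectivealg} the number of such chains is $\card \P^{d-1}(S_1) \cdot (q^{d-1})^{n-1} = \big(1 + q^{-1} + \cdots + q^{-(d-1)}\big) \cdot q^{(d-1)n}$, which is exactly the prefactor in~\eqref{eq:heightcount}. Then, for each column $j \geq 1$, I would show the number of legal ways to fill its white cells — namely rows $1, 2, \ldots, h(j)$, since cells with $i > h(j)$ are gray and forced equal to column $j-1$ — given that all earlier columns are already filled, equals $W'(h)(j) \cdot q^{(d-1) h(j)}$. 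The factor $q^{(d-1)h(j)}$ is the number of ways to extend a chain of length $h(j)$ up the tower when no ordering constraint is active (each step up a fibre of size $q^{d-1}$, except the top step from $\P^{d-1}(S_1)$ enters the picture only when $h(j)=n$, which is where the two cases of $W'$ come from). The remaining factor $W(h)(j) = \frac{1}{q^{d-1}} \cdot \frac{q^{d-1} - M(h)(j)}{M(h)(j)+1}$ (resp. its $h(j)=n$ variant) is exactly the proportion of those extensions that respect requirement~\eqref{item:req3} at level $h(j)$.

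The heart of the argument — and the step I expect to be the main obstacle — is this last point: why is the ordering constraint in row $h(j)$ satisfied with probability precisely $\frac{q^{d-1}-M(h)(j)}{q^{d-1}(M(h)(j)+1)}$? The key observation is that, by~\eqref{eq:distaj}, the entries in row $h(j)$ at columns $j' \le j$ that are forced to equal the entry at column $j$ are exactly those $j'$ with $h(j') = h(j)$ and $h(x) \le h(j)$ for $x \in \llbracket j', j\rrbracket$; there are $M(h)(j)$ such columns, and they all carry the \emph{same} element $b \in \P^{d-1}(S_{n+1-h(j)})$, which is already determined (it is the row-$(h(j)+1)$ entry shared by this block, lifted one more step). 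So when we fill column $j$, the element in row $h(j)$ must be a lift of that fixed $b$ to $\P^{d-1}(S_{n+2-h(j)})$ — there are $q^{d-1}$ such lifts — and the increasing constraint says it must be strictly larger than the element sitting in row $h(j)$ of column $j-1$, which (tracing the block structure) is the largest of $M(h)(j)-1$ previously-chosen lifts of $b$ together with... — more carefully, among the $q^{d-1}$ lifts of $b$, the $M(h)(j)-1$ used by the earlier columns of the block occupy the $M(h)(j)-1$ smallest-so-far slots, so a uniformly random new lift lands strictly above all of them (equivalently, the new block of $M(h)(j)$ lifts is an increasing subsequence) with probability $\binom{q^{d-1}}{M(h)(j)}^{-1} \cdot \binom{q^{d-1}}{M(h)(j)-1}\cdot(\text{telescoping})$; after simplification this is $\frac{q^{d-1} - M(h)(j) + ?}{\cdots}$. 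I would make this precise by the following clean reformulation: filling all columns of a single ``$b$-block'' of size $m := M(h)(j)$ in their natural left-to-right order amounts to choosing an increasing sequence of $m$ lifts of $b$, which is $\binom{q^{d-1}}{m}$ choices rather than the naive $(q^{d-1})^m$; so the multiplicative correction contributed by this block, spread over its $m$ columns, telescopes to exactly $\prod_{t=1}^{m} \frac{q^{d-1}-t+1}{t \cdot q^{d-1}}$, and the $j$-th column of the block (where the running multiplicity is $t = M(h)(j)$) picks up precisely the factor $\frac{q^{d-1} - M(h)(j) + 1}{M(h)(j) \cdot q^{d-1}}$ — at which point I would double-check the index bookkeeping against the definition of $W(h)(j)$, adjusting by the off-by-one in ``$M(h)(j)$ counts $j'=j$ itself'' so that it matches $\frac{1}{q^{d-1}}\cdot\frac{q^{d-1}-M(h)(j)}{M(h)(j)+1}$. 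The $h(j) = n$ case is identical except that the top fibre has size $1 + q + \cdots + q^{d-1} = \card\P^{d-1}(S_1)$ instead of $q^{d-1}$, which is exactly the modification in $W'$. Finally, multiplying the $0$-th-column count by $\prod_{j=1}^\ell W'(h)(j) q^{(d-1)h(j)}$ and invoking that these column-by-column choices are independent (each column's legal fillings depend only on what is forced from column $j-1$ and from the row above, both already fixed) yields~\eqref{eq:heightcount}.
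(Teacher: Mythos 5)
Your overall strategy is viable and is in fact very close to the paper's: both proofs rest on the same bijection between subsets $A$ with $h_A = h$ and fillings of the table subject to requirements \eqref{item:req1}--\eqref{item:req3}; the paper sweeps the table row by row (from the bottom row up), counting each row at one stroke, whereas you sweep column by column. Your prefactor computation for the $0$-th column and the factor $q^{(d-1)(h(j)-1)}$ for the unconstrained cells of column $j$ are correct. But the heart of the proposition --- why the constrained cell $(h(j),j)$ contributes exactly $q^{d-1}W'(h)(j)$ --- is precisely the step you leave unfinished (your displayed computation trails off with a ``$?$''), and the telescoping you then propose is off in a way that does not self-correct. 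Your ``$b$-block'' of size $m=M(h)(j)$ consists only of the columns $j'$ with $h(j')=h(j)$, but the increasing constraint in row $h(j)$ also involves one more already-chosen lift of the same $b$: the entry in row $h(j)$ of the block's leading column, namely column $0$ or the nearest earlier column $j_0$ with $h(j_0)>h(j)$ (its cell $(h(j),j_0)$ is white, was counted as a free factor $q^{d-1}$, and lifts the same $b$ because all cells of row $h(j)+1$ between $j_0$ and $j$ are forced). So the block really carries $m+1$ strictly increasing lifts of $b$, giving $\binom{q^{d-1}}{m+1}$ joint choices, not $\binom{q^{d-1}}{m}$; the correct per-column ratio is $\binom{q^{d-1}}{t+1}\big/\binom{q^{d-1}}{t}=\frac{q^{d-1}-t}{t+1}=q^{d-1}W(h)(j)$, whereas your telescoping yields $\frac{q^{d-1}-t+1}{t}$. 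The ``off-by-one adjustment'' you defer is exactly this missing leading column, and until it is incorporated the product of your column factors does not equal \eqref{eq:heightcount}.

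A second, related gap is your closing claim that the column-by-column choices are ``independent''. They are not: the number of legal values for the cell $(h(j),j)$ depends on \emph{which} lifts the earlier columns of its block chose (it equals the number of lifts of $b$ above the current block maximum), so you cannot simply multiply per-column counts; only the block-aggregated counts factor. You must either count each block jointly (this is what the paper does: filling a given row amounts, block by block, to choosing a $c$-element subset of a fibre, giving $\binom{q^{d-1}}{c}$, or $\binom{1+q+\cdots+q^{d-1}}{c}$ on the bottom row, and these binomials are exactly the products of the cell labels), or justify the averaged/ratio-of-totals identity blockwise before spreading it over columns. With the leading column included and the blockwise binomial count made explicit, your column-wise variant does go through, but as written the key factor $W'(h)(j)$ is asserted rather than proved.
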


\begin{proof} 

Let us first explain that the value~\eqref{eq:heightcount} can be easily 
read off on the representation by cells (see Figure~\ref{fig:height}) we 
have introduced before. To do this, write $1 + q + \cdots + q^{d-1}$ 
in the cell $(0,n)$, write the number $q^{d-1} W'(h)(j)$ in the cell 
$(h(j),j)$ ($0 \leq j \leq \ell$) and $q^{d-1}$ in all other 
\emph{white} cells. In the example of Figure~\ref{fig:height}, we
get:

\medskip

\noindent \hfill
\begin{tikzpicture}[yscale=0.7,yscale=-1]
\begin{scope}[black!30]
\fill (1,3) rectangle (2,5);
\fill (2,2) rectangle (3,5);
\fill (3,4) rectangle (4,5);
\fill (4,3) rectangle (5,5);
\fill (5,1) rectangle (6,5);
\fill (6,3) rectangle (7,5);
\fill (7,5) rectangle (8,5);
\fill (8,2) rectangle (11,5);
\end{scope}
\draw (0,0) grid (11,5);
\node[scale=0.8] at (0.5,-0.4) { $0$ };
\node[scale=0.8] at (1.5,-0.4) { $1$ };
\node[scale=0.8] at (2.5,-0.4) { $2$ };
\node[scale=0.8] at (3.5,-0.4) { $3$ };
\node[scale=0.8] at (4.5,-0.4) { $4$ };
\node[scale=0.8] at (5.5,-0.4) { $5$ };
\node[scale=0.8] at (6.5,-0.4) { $6$ };
\node[scale=0.8] at (7.5,-0.4) { $7$ };
\node[scale=0.8] at (8.5,-0.4) { $8$ };
\node[scale=0.8] at (9.5,-0.4) { $9$ };
\node[scale=0.8] at (10.5,-0.4) { $10$ };
\node[scale=0.8] at (-0.4,0.5) { $1$ };
\node[scale=0.8] at (-0.4,1.5) { $2$ };
\node[scale=0.8] at (-0.4,2.5) { $3$ };
\node[scale=0.8] at (-0.4,3.5) { $4$ };
\node[scale=0.8] at (-0.4,4.5) { $5$ };
\node at (0.5,4.5) { $P$ };
\node at (0.5,3.5) { $A$ };
\node at (0.5,2.5) { $A$ };
\node at (0.5,1.5) { $A$ };
\node at (0.5,0.5) { $A$ };
\node at (1.5,2.5) { $\frac{A-1}2$ };
\node at (1.5,1.5) { $A$ };
\node at (1.5,0.5) { $A$ };
\node at (2.5,1.5) { $\frac{A-1}2$ };
\node at (2.5,0.5) { $A$ };
\node at (3.5,3.5) { $\frac{A-1}2$ };
\node at (3.5,2.5) { $A$ };
\node at (3.5,1.5) { $A$ };
\node at (3.5,0.5) { $A$ };
\node at (4.5,2.5) { $\frac{A-1}2$ };
\node at (4.5,1.5) { $A$ };
\node at (4.5,0.5) { $A$ };
\node at (5.5,0.5) { $\frac{A-1}2$ };
\node at (6.5,2.5) { $\frac{A-2}3$ };
\node at (6.5,1.5) { $A$ };
\node at (6.5,0.5) { $A$ };
\node at (7.5,4.5) { $\frac{P-1}2$ };
\node at (7.5,3.5) { $A$ };
\node at (7.5,2.5) { $A$ };
\node at (7.5,1.5) { $A$ };
\node at (7.5,0.5) { $A$ };
\node at (8.5,1.5) { $\frac{A-1}2$ };
\node at (8.5,0.5) { $A$ };
\node at (9.5,1.5) { $\frac{A-2}3$ };
\node at (9.5,0.5) { $A$ };
\node at (10.5,1.5) { $\frac{A-3}4$ };
\node at (10.5,0.5) { $A$ };
\end{tikzpicture}
\hfill\null

\medskip

\noindent
where we have set $A = q^{d-1}$ ($A$ for ``affine'') and 
$P = 1 + q + \cdots + q^{d-1}$ ($P$ for ``projective'').
It can then be easily checked that the quantity~\eqref{eq:heightcount} 
equals the product of all the numbers written in the above table.

Now recall that we have previously defined a bijection between the set 
of all $A$'s such that $h_A = h$ and the fillings of the table 
corresponding to $h$ obeying to the requirements 
\eqref{item:req1}--\eqref{item:req3} listed on page \pageref{item:req1}.
We are going to show that the number of such fillings of the $m$ last 
rows is exactly the product of the numbers appearing on the $m$ last 
rows. This will conclude the proof.
We proceed by induction on $m$. For $m = 1$, we have to count the 
number of strictly increasing sequences of elements of $\P^{d-1}(k)$ of 
length $c$ where $c$ is the number of white cells located on the last 
row. The data of such a sequence is obviously equivalent to the data of 
the set of its values. Since furthermore $\card \P^{d-1}(k) = P$, there 
are then $\binom P c$ such sequences and we are done for $m = 1$.
More generally, going from $m$ to $m+1$ is obtained in a similar fashion 
once we have noticed that the fibres of $\sp_{n-m+1, n-m}$ all have 
cardinality $A$ (see the discussion just below Eq.~\eqref{eq:cardP1}, 
page \pageref{eq:cardP1}).
\end{proof}

\subsubsection{Directional expected values}
\label{sssec:directional}

Throughout this paragraph, we fix a subset $A$ of $\P^{d-1}(S_n)$.
We write $A = \{a_0, a_1, \ldots, a_\ell\}$ with $a_0 < a_1 < \cdots
< a_\ell$ and denote by $h_A$ the height function of $A$.
Recall that we have defined a random variable $C_A$ on $\Omega_n$
by Eq.~\eqref{eq:CA}. The aim of this paragraph is to compute its mean.
In order to do so, we consider the following evaluation mapping:
$$\begin{array}{rcl}
\text{ev}_A : \quad \Omega_n & \to & (R_n^d)^{\ell+1} \smallskip\\
f & \mapsto & \big(f(a_0), f(a_1), \ldots, f(a_\ell)\big).
\end{array}$$
Clearly, $C_A(f)$ only depends on $\text{ev}_A(f)$ for $f \in \Omega_n$.
Moreover $\text{ev}_A$ is a group homomorphism, which notably implies 
that the fibres of $\text{ev}_A$ all have the same cardinality. As a 
consequence, letting $\calB_A$ denote the image of $\text{ev}_A$,
we get:
\begin{equation}
\label{eq:meanCA}
\E[C_A] = \frac 1{\card \calB_A} \cdot
\sum_{b \in \calB_A} \card \bigcap_{j=1}^\ell
\Sigma_{a_j}(b_j)
\end{equation}
where
$\Sigma_{a_j}(b_j) = \big\{ t \cdot \can_n(a_i) + b_i \::\: t\in R_n \big\}$.

\begin{lem}
The set $\mathcal B_A$ consists of tuples $(b_0, b_1, \ldots, 
b_{\ell+1}) \in (R_n^d)^{\ell+1}$ such that
$b_{j+1} \equiv b_j \pmod{\m^{n-h(j)}}$ for all $j \in \llbracket 1, 
\ell \rrbracket$.
\end{lem}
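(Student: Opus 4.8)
The plan is to characterize the image $\calB_A$ of $\text{ev}_A$ directly from the definition of $\Omega_n$ as the set of $1$-Lipschitz functions $\P^{d-1}(S_n) \to R_n^d$. Recall that $1$-Lipschitz means precisely that $f(a) \equiv f(b) \pmod{\m^{v_n(a,b)}}$ for all $a,b$. So for the ordered elements $a_0 < a_1 < \cdots < a_\ell$ of $A$, any $f \in \Omega_n$ produces a tuple $(f(a_0), \ldots, f(a_\ell))$ satisfying $b_j \equiv b_{j'} \pmod{\m^{v_n(a_j,a_{j'})}}$ for all pairs $j,j'$. The first step is to show that all these congruences are implied by just the consecutive ones, namely $b_{j} \equiv b_{j-1} \pmod{\m^{v_n(a_{j-1},a_j)}}$, i.e.\ $b_j \equiv b_{j-1} \pmod{\m^{n - h_A(j)}}$. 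This is exactly where Eq.~\eqref{eq:distaj} enters: since $v_n(a_j, a_{j'}) = n - \max(h(j{+}1), \ldots, h(j'))$ for $j < j'$, the non-archimedean (ultrametric) nature of congruences gives that if $b_i \equiv b_{i-1} \pmod{\m^{n-h(i)}}$ for each $i$ in the range $j{+}1, \ldots, j'$, then $b_{j'} - b_j = \sum_{i=j+1}^{j'} (b_i - b_{i-1})$ lies in $\m^{\min_i (n - h(i))} = \m^{n - \max_i h(i)} = \m^{v_n(a_j,a_{j'})}$, as desired. Hence the consecutive congruences characterize $\calB_A$ among tuples coming from Lipschitz functions.

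The second step is to prove the reverse inclusion: every tuple $(b_0, \ldots, b_\ell) \in (R_n^d)^{\ell+1}$ satisfying the consecutive congruences $b_{j+1} \equiv b_j \pmod{\m^{n-h(j)}}$ actually arises as $\text{ev}_A(f)$ for some $f \in \Omega_n$. For this I would exhibit an explicit extension: define $f$ on $A$ by $f(a_j) = b_j$, and extend it to all of $\P^{d-1}(S_n)$. The cleanest way is to set $f(a) = b_j$ where $a_j$ is, say, the largest element of $A$ with $a_j \le a$ (or any nearest-neighbour rule with respect to the tree structure); one then checks that this $f$ is $1$-Lipschitz. By the first step, the pairwise congruences among the $b_j$'s for $a,b \in A$ already hold, and for a general pair $a, b \notin A$ one reduces to the values at the chosen representatives using the ultrametric inequality $v_n(a,b) \le v_n(a, a_j)$, $v_n(b, a_{j'})$ together with the compatibility of $v_n$ with the total order. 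Alternatively, and perhaps more robustly, one can invoke Proposition~\ref{prop:torsor}: build $f$ level by level, at each step $\psi_{m+1,m}$ has the torsor structure under $G_{m+1}$, so any partial assignment that is consistent with the Lipschitz condition at level $m$ extends to level $m{+}1$; since the consecutive congruences encode exactly the level-by-level consistency, the extension exists.

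The main obstacle I expect is the second step — showing surjectivity onto the congruence-defined set — because it requires genuinely constructing a Lipschitz extension rather than merely verifying a necessary condition, and one has to be careful that the nearest-neighbour extension rule does not accidentally create a violation of the Lipschitz condition at some pair not involving elements of $A$. The key technical point making this work is that $\P^{d-1}(S_n)$ with the valuation $v_n$ is an ultrametric space whose balls form a tree, and the chosen order on each $\P^{d-1}(S_i)$ (satisfying the compatibility~\eqref{eq:condorder}) is compatible with this tree structure; this is precisely what guarantees that "locally constant on a ball" extensions behave well. Once both inclusions are established, the lemma follows. As a sanity check, one notes that this description immediately gives $\card \calB_A = q^{nd} \cdot \prod_{j=1}^\ell q^{d \cdot h_A(j)}$ (each $b_0$ is free in $R_n^d$, and each successive $b_j$ is determined modulo $\m^{n-h(j)}$, leaving $h(j)$ digits of freedom in each of the $d$ coordinates), which is the quantity that will feed into Eq.~\eqref{eq:meanCA}.
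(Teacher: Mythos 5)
Your overall structure is the paper's: the forward inclusion is immediate from the definition of $\Omega_n$, the pairwise congruences follow from the consecutive ones by telescoping through Eq.~\eqref{eq:distaj}, and the reverse inclusion is proved by extending a tuple to a $1$-Lipschitz function via a nearest-point rule. However, the step you yourself flag as the main obstacle --- proving the extension is $1$-Lipschitz --- is precisely where your proposal has a genuine gap, in two respects. First, your primary extension rule, ``$f(a)=b_j$ where $a_j$ is the largest element of $A$ with $a_j\le a$,'' is not $1$-Lipschitz in general: take $A=\{a_0,a_1\}$ with $v_n(a_0,a_1)=0$ (so the congruence constraint on $(b_0,b_1)$ is vacuous) and let $a$ satisfy $a_0<a<a_1$ with $v_n(a,a_1)$ large; the predecessor rule gives $f(a)=b_0$, while the Lipschitz condition at the pair $(a,a_1)$ would force $b_0\equiv b_1 \pmod{\m^{v_n(a,a_1)}}$, which need not hold. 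The order-compatibility \eqref{eq:condorder} makes balls into intervals, but the order-predecessor of $a$ inside $A$ need not be an ultrametrically nearest element, so this rule and a genuine nearest-neighbour rule are not interchangeable. The paper's choice is the nearest-neighbour rule with ties broken by taking the \emph{smallest} index $j(a)$ maximizing $v_n(a,a_{j(a)})$, and this tie-breaking is used in the proof.

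Second, even with the correct rule, the inequality you invoke to reduce a general pair to the representatives, $v_n(a,b)\le v_n(a,a_{j(a)})$, is false in general (take $a,b$ close to each other but far from every element of $A$; then $j(a)=j(b)$ and the Lipschitz bound holds trivially, but not via that inequality). The actual verification requires the case analysis that occupies most of the paper's proof: if $v_n(a,a')\le v_n(a_{j(a)},a_{j(a')})$ the bound follows from the pairwise congruences; and if $v_n(a,a')> v_n(a_{j(a)},a_{j(a')})$ one derives a contradiction with the minimality of the chosen indices, so this case cannot occur when $j(a)\neq j(a')$. Your alternative route via Proposition~\ref{prop:torsor} (building the extension level by level, using that the fibres of $\psi_{m+1,m}$ permit arbitrary values and that the pairwise congruences guarantee agreement whenever two elements of $A$ collapse at a given level) is a reasonable and arguably cleaner way to get surjectivity, but as written it is only an outline and would need the relative statement (extension with finitely many prescribed values) spelled out. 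Your closing cardinality check agrees with the paper's subsequent corollary.
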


\begin{proof}
By definition of $h_A$, we have $v_n(a_j, a_{j+1}) = n-h_A(j)$ for all
$j$. Going back to the definition of $\Omega_n$, we deduce that, for 
any $f \in \Omega_n$ and $j \in \llbracket 1, \ell \rrbracket$, we
must have $f(a_{j+1}) \equiv f(a_j) \pmod{\m^{n-h_A(j)}}$.
In other words, $\text{ev}_A$ takes its values in $\mathcal B_A$.

Conversely pick $(b_0, b_1, \ldots, b_{\ell+1}) \in \mathcal B_A$. Given 
$a \in \P^{d-1}(S_n)$, let $j(a)$ be the smallest index for which 
$v_n(a,a_{j(a)})$ is maximal and set $f(a) = b_{j(a)}$. This defines a 
function $f : \P^{d-1}(S_n) \to R_n^d$ satisfying $f(a_j) = b_j$ for all 
$j$.
It remains to prove that $f \in \Omega_n$, \emph{i.e.} that $f$ 
is $1$-Lipschitz. Let $a, a' \in \P^{d-1}(S_n)$ and set for simplicity
$j = j(a)$ and $j' = j(a')$. Up to swapping $a$ and $a'$, we may assume
that $j \leq j'$. If $j = j'$ there is nothing to prove. Otherwise, it 
follows from Eq.~\eqref{eq:distaj} and the definition of $\calB_A$ that 
$b_j \equiv b_{j'} \pmod{\m^{v_n(a_j,a_{j'})}}$. 
This readily implies the $1$-Lipschitz condition under the extra 
assumption $v_n(a,a') \leq v_n(a_j,a_{j'})$ since then 
$\m^{v_n(a_j,a_{j'})} \subset \m^{v_n(a,a')}$. Let us now examine the
case where $v_n(a,a') > v_n(a_j,a_{j'})$. Put $\nu = v_n(a,a')$. 
From the assumption $v_n(a,a_j) \geq \nu$, we would derive:
$$v_n(a', a_{j'}) \geq v_n(a',a_j) \geq \min(v_n(a',a), v_n(a,a_j)) 
\geq \nu$$
and would deduce:
$$v_n(a_j,a_{j'}) \geq \min(v_n(a_j,a), v_n(a,a'), v_n(a',a_{j'})) =
\nu$$
which is a contradiction. Hence $v_n(a,a_j) < \nu$ and similarly
$v_n(a',a_{j'}) < \nu$. Noting that $v_n(x,z) = \min(v_n(x,y), v_n(y,z))$
as soon as $v_n(x,y) \neq v_n(y,z)$ (which comes from the very first
definition of $v_n$), we find:
$$v_n(a',a_j) = v_n(a,a_j) \geq v_n(a,a_{j'}) = v_n(a',a_{j'}).$$
Now we conclude by remarking that the above inequality cannot be true 
since it contradicts the minimality of $j'$ (remember that we had 
assumed $j < j'$).
\end{proof}

\begin{cor}
We have:
\begin{equation}
\label{eq:cardBA}
\card \calB_A = q^{nd} \cdot \prod_{j=1}^\ell q^{d \cdot h_A(j)}.
\end{equation}
\end{cor}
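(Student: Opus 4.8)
The plan is to count $\card\calB_A$ directly, using the explicit description of $\calB_A$ furnished by the preceding Lemma. That Lemma identifies $\calB_A$ with the set of tuples $(b_0, b_1, \ldots, b_\ell) \in (R_n^d)^{\ell+1}$ whose successive coordinates satisfy the congruences
$$b_{j-1} \equiv b_j \pmod{\m^{n - h_A(j)}} \qquad (1 \leq j \leq \ell),$$
and which are otherwise unconstrained. The key structural observation is that each such congruence only links two coordinates that are \emph{consecutive} in the chosen order $a_0 < a_1 < \cdots < a_\ell$ on $A$; the constraints thus form a chain with no cycle, so the admissible tuples can be enumerated one coordinate at a time.

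Carrying this out: first I would pick $b_0 \in R_n^d$ with no restriction, which gives $\card R_n^d = q^{nd}$ choices. Then, for $j = 1, 2, \ldots, \ell$ successively, I would pick $b_j$ once $b_{j-1}$ has been fixed. By the Lemma the set of admissible $b_j$ is precisely the coset $b_{j-1} + \big(\m^{n - h_A(j)}/\m^n\big)^d$ of $R_n^d$. Since $h_A(j) \in \llbracket 1, n \rrbracket$, the integer $n - h_A(j)$ lies in $\llbracket 0, n-1 \rrbracket$, and $\card\big(\m^k/\m^n\big) = q^{n-k}$ for $0 \leq k \leq n$; hence there are exactly $q^{d \cdot h_A(j)}$ choices for $b_j$, a number that does not depend on $b_{j-1}$. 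Multiplying the numbers of choices over the successive steps gives
$$\card\calB_A = q^{nd} \cdot \prod_{j=1}^{\ell} q^{d \cdot h_A(j)},$$
which is Eq.~\eqref{eq:cardBA}.

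I do not expect a real obstacle here: the entire difficulty is concentrated in the preceding Lemma, which tells us both that the defining conditions of $\calB_A$ are local (only between neighbours for the order on $A$) and that there are no further conditions, so that the count factors cleanly. The only point worth a word of care is checking that $n - h_A(j)$ is a legitimate exponent for the ideal $\m$ in $R_n$, which is immediate from $h_A(j) \in \llbracket 1, n \rrbracket$. Alternatively, one could exploit that $\mathrm{ev}_A \colon \Omega_n \to (R_n^d)^{\ell+1}$ is a group homomorphism and write $\card\calB_A = \card\Omega_n / \card\ker(\mathrm{ev}_A)$, but the direct enumeration above seems the most economical.
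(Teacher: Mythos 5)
Your proposal is correct and follows essentially the same chain-counting argument as the paper: choose $b_0$ freely ($q^{nd}$ possibilities), then for each successive $j$ count the $q^{d\cdot h_A(j)}$ admissible values of $b_j$ in the coset determined by $b_{j-1}$, and multiply. The extra remarks (the cardinality of $\m^k/\m^n$ and the alternative via the homomorphism $\mathrm{ev}_A$) are fine but not needed beyond what the paper already does.
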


\begin{proof}
There are $q^{nd}$ possibilities for the choice of $b_0$. Once this
choice has been made, $b_1$ must satisfy $b_1 \equiv b_0 \pmod
{\m^{n-h_A(1)}}$, which leads to $q^{d \cdot h_A(1)}$ possibilities.
Repeating this reasoning, we end up with the announced formula.
\end{proof}

\begin{prop}
\label{prop:directionalmean}
We have:
$$\E[C_A] = q^n \cdot \prod_{j=1}^\ell \,\, q^{-(d-1) \cdot h_A(j)}.$$
\end{prop}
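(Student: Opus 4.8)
The plan is to combine the two facts already available: formula~\eqref{eq:meanCA}, which expresses $\E[C_A]$ as an average of sizes of intersections of the segments $\Sigma_{a_j}(b_j)$, and the count~\eqref{eq:cardBA} of $\card \calB_A$. Write $A = \{a_0 < a_1 < \cdots < a_\ell\}$ and, for $1 \leq j \leq \ell$, put $m_j = v_n(a_j, a_{j-1}) = n - h_A(j)$. First I would rewrite the numerator
$$T_A := \sum_{b \in \calB_A} \card \bigcap_{j=0}^{\ell} \Sigma_{a_j}(b_j)$$
as the number of tuples $(x; t_0, \ldots, t_\ell) \in R_n^d \times R_n^{\ell+1}$ for which the points $b_j := x - t_j \can_n(a_j)$ form an element of $\calB_A$. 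Since the $\piv_n(a_j)$-th coordinate of $\can_n(a_j)$ equals $1$, the parameter realizing a given point on a segment of direction $a_j$ is unique; hence $(x; t_0, \ldots, t_\ell) \mapsto \big((b_0, \ldots, b_\ell), x\big)$ is a bijection onto the pairs $(b, x)$ with $b \in \calB_A$ and $x \in \bigcap_{j} \Sigma_{a_j}(b_j)$, which is exactly what $T_A$ counts.

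The key step is then to turn the membership conditions defining $\calB_A$ into conditions on the $t_j$'s. By the description of $\calB_A$ these conditions read $b_j \equiv b_{j-1} \pmod{\m^{m_j}}$ for $1 \leq j \leq \ell$; substituting $b_j = x - t_j \can_n(a_j)$, the variable $x$ drops out and the condition becomes $t_j \can_n(a_j) \equiv t_{j-1} \can_n(a_{j-1}) \pmod{\m^{m_j}}$. Now $\sp_{n, m_j}(a_j) = \sp_{n, m_j}(a_{j-1})$ by definition of $m_j$, and since canonical representatives are compatible with reduction (they keep the same pivot, cf. the proof of Proposition~\ref{prop:descprojective}), we get $\can_n(a_j) \equiv \can_n(a_{j-1}) \pmod{\m^{m_j}}$. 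The condition therefore simplifies to $(t_j - t_{j-1}) \can_n(a_j) \equiv 0 \pmod{\m^{m_j}}$ and finally, reading off the pivot coordinate of $\can_n(a_j)$, to $t_j \equiv t_{j-1} \pmod{\m^{m_j}}$. In particular $x$ is entirely unconstrained, and the $t_j$'s can be chosen one at a time: $t_0 \in R_n$ is arbitrary, and once $t_{j-1}$ is fixed the admissible $t_j$'s form a coset of $\m^{m_j}/\m^n$ in $R_n$, of which there are $q^{n - m_j} = q^{h_A(j)}$. Thus $T_A = q^{nd} \cdot q^n \cdot \prod_{j=1}^{\ell} q^{h_A(j)}$.

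Dividing by $\card \calB_A = q^{nd} \prod_{j=1}^{\ell} q^{d\, h_A(j)}$ from~\eqref{eq:cardBA} yields
$$\E[C_A] = \frac{T_A}{\card \calB_A} = q^n \cdot \prod_{j=1}^{\ell} q^{-(d-1)\, h_A(j)},$$
which is the assertion. The one point that needs genuine care is the reduction of the congruence $t_j \can_n(a_j) \equiv t_{j-1}\can_n(a_{j-1}) \pmod{\m^{m_j}}$ to $t_j \equiv t_{j-1} \pmod{\m^{m_j}}$: it hinges on the compatibility of the canonical representatives under the specialization maps, together with the presence of a unit (indeed, a coordinate equal to $1$) among the entries of $\can_n(a_j)$. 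Everything else is routine counting of cosets in the finite ring $R_n$.
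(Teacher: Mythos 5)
Your proof is correct and is essentially the paper's argument: the paper also counts incidences between points $c\in R_n^d$ and tuples $b\in\calB_A$, parametrizes each $b_j$ by the unique $t_j$ with $t_j\cdot\can_n(a_j)+b_j=c$, and uses Proposition~\ref{prop:distn} (your compatibility of canonical representatives) to reduce the $\calB_A$-conditions to $t_j\equiv t_{j-1}\pmod{\m^{n-h_A(j)}}$, yielding the same count $q^{nd}\cdot q^n\cdot\prod_j q^{h_A(j)}$ before dividing by $\card\calB_A$. Your presentation merely performs the double count globally instead of fixing the point $c$ first, which is a cosmetic difference.
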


\begin{proof}
Fix a point $c \in R_n^d$. We are going to count the number of 
parameters $(b_0, b_1, \ldots, b_\ell) \in \calB_A$ for which $c$
lies on all lines $\Sigma_{a_j}(b_j)$ ($0 \leq j \leq \ell$). Call
$N_c$ this number.

We first focus on $b_0$. By definition $c \in \Sigma_{a_0}(b_0)$ if and 
only if there exists $t_0 \in R_n$ such that $t_0 \cdot \can(a_0) + b_0 
= c$. Since one of the coordinates of $\can(a_0)$ is equal to $1$, the 
mapping $t \mapsto t \cdot \can(a_0) + b_0$ is injective and there is 
then exactly $\card R_n = q^n$ acceptable values for $b_0$.

Suppose now that we are given $b_0, \ldots, b_j$ satisfying the
above condition and let us count the number of possibilities for
completing the sequence with an extra term $b_{j+1}$. This $b_{j+1}$
has to satisfy the two following conditions:
\begin{align*}
\exists t_{j+1} \in R_n, \quad 
& t_{j+1} \cdot \can(a_{j+1}) + b_{j+1} = c \\
& b_{j+1} \equiv b_j \pmod{\m^{n-h_A(j)}}
\end{align*}
Our problem then amounts to counting the number of values $t_{j+1}
\in R_n$ such that:
\begin{equation}
\label{eq:keycongr}
t_{j+1} \cdot \can(a_{j+1}) + b_j \equiv c \pmod{\m^{n-h_A(j)}}.
\end{equation}
Since $c \in \Sigma_{a_j}(b_j)$, we know that there exists some $t_j
\in R_n$ such that $t_j \cdot \can(a_j) + b_j = c$. By 
Proposition~\ref{prop:distn}, we know moreover that $\can(a_j) \equiv \can
(a_{j+1}) \pmod{\m^{n-h_A(j)}}$. Thus $t_{j+1} = t_j$ is a solution
of~\eqref{eq:keycongr} and, using again that $\can(a_{j+1})$ has one
coordinate equal to $1$, we find that Eq.~\eqref{eq:keycongr} rewrites 
$t_{j+1} \equiv t_j \pmod{\m^{n-h_A(j)}}$. There are thus $q^{h_A(j)}$
possibilities for $t_{j+1}$.

As a consequence of the previous discussion, we find that 
$N_c = q^n \cdot q^{h_A(1)} \cdot q^{h_A(2)} \cdots q^{h_A(\ell)}$
(independantly on $c$). Finally notice that:
$$\sum_{b \in \calB_A} \card \bigcap_{j=1}^\ell
\Sigma_{a_j}(b_j) = \sum_{c \in R_n^d} N_c = q^{nd} \cdot q^n \cdot q^{h_A(1)} \cdot q^{h_A(2)} \cdots q^{h_A(\ell)}$$
and conclude by injecting this equality together with Eq.~\eqref{eq:cardBA} 
in Eq.~\eqref{eq:meanCA}.
\end{proof}

\subsubsection{Summing up all contributions}
\label{sssec:sumup}

Let $\mathcal H_n$ be the set of all functions $h : \llbracket 1, \ell 
\rrbracket \to \llbracket 1, n \rrbracket$ for $\ell$ varying in 
$\llbracket 0, +\infty \llbracket$ (agreeing as usual that there exists
a unique function $h : \emptyset \to \llbracket 1, n \rrbracket$).
For $h \in \mathcal H_n$, denote $\ell(h)$ its $\ell$. Combining 
Proposition~\ref{prop:heightcount} and 
Proposition~\ref{prop:directionalmean}, we find that the expected value 
of $C$ is:
\begin{equation}
\label{eq:EC2}
\E[C] = 
\big(1 + q^{-1} + q^{-2} + \cdots + q^{-(d-1)}\big) \cdot q^{nd}
\cdot \sum_{h \in \mathcal H_n} (-1)^{\ell(h)} \prod_{i=1}^{\ell(h)} \,\, W'(h)(i)
\end{equation}
Recall that we have defined a sequence $(u_n)_{n \geq 0}$ by:
\begin{equation}
\label{eq:recun}
u_0 = 1 \quad ; \quad
u_n = 1 - \left( 1 - \frac{u_{n-1}}{q^{d-1}} \right)^{q^{d-1}}.
\end{equation}

\begin{prop}
The following formula holds:
$$u_n = \sum_{h \in \mathcal H_n} (-1)^{\ell(h)} 
\prod_{i=1}^{\ell(h)} \,\, W(h)(i).$$
\end{prop}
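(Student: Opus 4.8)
The plan is to prove the identity by induction on $n$, the idea being to split each height function along the columns on which it attains its maximal possible value $n$. Write $Q = q^{d-1}$ and, for a nonnegative integer $m$, set $T_m = \sum_{h \in \mathcal H_m} (-1)^{\ell(h)} \prod_{i=1}^{\ell(h)} W(h)(i)$; the goal is to show $T_m = u_m$. Note first that $W(h)(j) = 0$ as soon as $M(h)(j) = Q$, and (as the decomposition below will make evident) for fixed $m$ only finitely many $h$ avoid producing such a vanishing factor, so each $T_m$ is really a finite sum and no convergence subtlety arises. For the base case $n = 0$, the set $\llbracket 1, 0 \rrbracket$ is empty, so $\mathcal H_0$ consists of the single empty function (with $\ell = 0$) and $T_0$ is the empty product, namely $1 = u_0$.

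For the inductive step, fix $n \geq 1$ and assume $T_{n-1} = u_{n-1}$. Let $h \in \mathcal H_n$ have domain $\llbracket 1, \ell\rrbracket$, and let $j_1 < \cdots < j_r$ be the columns with $h(j) = n$. With the conventions $j_0 = 0$ and $j_{r+1} = \ell + 1$, these columns cut $\llbracket 1, \ell\rrbracket$ into $r+1$ (possibly empty) consecutive blocks $I_s = \llbracket j_s + 1, j_{s+1} - 1\rrbracket$, on each of which $h$ takes values in $\llbracket 1, n-1\rrbracket$; conversely, the datum of $r \geq 0$, of the sizes of the blocks, and of an arbitrary element of $\mathcal H_{n-1}$ on each (suitably reindexed) block determines $h$ uniquely, the separator columns $j_s$ being forced to the value $n$. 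The crucial observation is the behaviour of the multiplicity function under this decomposition. For a column $j \in I_s$ one has $h(j) \leq n-1 < n$; hence an index $j' \leq j$ admissible in the definition of $M(h)(j)$ (so that $h(x) \leq h(j)$ on $\llbracket j', j\rrbracket$) cannot span a separator column, which forces $j' \in I_s$, so that $M(h)(j)$, and therefore $W(h)(j)$, depends only on the restriction of $h$ to $I_s$ (multiplicities being translation invariant). On the other hand, for the $s$-th separator the admissible indices are precisely $j_1, \ldots, j_s$, so $M(h)(j_s) = s$ and $W(h)(j_s) = \tfrac{1}{Q}\cdot\tfrac{Q-s}{s+1}$; in particular this weight vanishes for $s = Q$, which confirms the finiteness claimed above.

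Feeding this factorization into $T_n$ — the blocks contributing independently a factor $T_{n-1}$ each, the $s$-th separator the weight $\tfrac{1}{Q}\cdot\tfrac{Q-s}{s+1}$, and the sign being $(-1)^{\ell(h)} = (-1)^r\prod_s(-1)^{\ell(h|_{I_s})}$ — yields
$$T_n = \sum_{r \geq 0} (-1)^r \Big(\prod_{s=1}^r \frac{Q-s}{Q(s+1)}\Big)\, T_{n-1}^{\,r+1}.$$
The product telescopes to $\tfrac{1}{Q^{r+1}}\binom{Q}{r+1}$, so, putting $k = r+1$ and recognising the (finite) binomial expansion of $(1 - T_{n-1}/Q)^{Q}$,
$$T_n = -\sum_{k=1}^{Q} \binom{Q}{k}\Big(\frac{-T_{n-1}}{Q}\Big)^{k} = 1 - \Big(1 - \frac{T_{n-1}}{Q}\Big)^{Q},$$
which is exactly the recurrence \eqref{eq:recun}. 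Hence $T_n = u_n$, completing the induction.

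The only genuinely delicate point is the analysis of the multiplicity function under the block decomposition (the claim that admissible indices cannot straddle a separator column, and the count $M(h)(j_s) = s$); the telescoping of the separator weights and the collapse of the alternating binomial sum are routine.
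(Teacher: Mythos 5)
Your proof is correct and follows essentially the same route as the paper: cutting each $h\in\mathcal H_n$ at the columns where it takes the value $n$, identifying $\mathcal H_n$ with finite sequences of elements of $\mathcal H_{n-1}$, computing the separator weights $\frac 1{q^{d-1}}\cdot\frac{q^{d-1}-s}{s+1}$, and recognising the binomial expansion of $\bigl(1-\frac{u_{n-1}}{q^{d-1}}\bigr)^{q^{d-1}}$. The only differences are presentational: you make the induction explicit and spell out details the paper leaves implicit (finiteness of the sum via vanishing weights, and the fact that multiplicities do not straddle separator columns), which is a welcome bonus rather than a deviation.
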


\begin{proof}
For simplicity, we set $w(h) = \prod_{i=1}^{\ell(h)} W(h)(i)$.
The key observation is the following: to each $h \in \mathcal H_n$, one
can attach a finite sequence $h_0, h_1, \ldots, h_m$ of functions in
$\mathcal H_{n-1}$ as follows. Let $j_1 < j_2 < \cdots < j_m$ be the 
integers for which $h(j_i) = n$, set $j_0 = 0$ and $j_{m+1} = \ell(h)+1$
and, for $i \in \llbracket 0, m \rrbracket$, define:
$$\begin{array}{rcl}
h_i : \llbracket 1, j_{i+1}{-}j_i{-}1 \rrbracket 
& \to & \llbracket 1, n{-}1 \rrbracket \smallskip \\
j & \mapsto & h(j+j_i).
\end{array}$$
On the representation of Figure~\ref{fig:height}, the functions $h_i$'s
then correspond to the bands (with last row erased) located between two white columns.
This construction clearly defines
a bijection between $\mathcal H_n$ and the set of finite sequences of
elements of $\mathcal H_{n-1}$. This bijection is moreover compatible
with the length and the weight functions in the following sense: if $h$ 
corresponds to $(h_0, h_1, \ldots, h_m)$ then
$\ell(h) = m + \ell(h_1) + \ell(h_2) + \cdots + \ell(h_m)$ and
$$\begin{array}{r@{\hspace{0.5ex}}ll}
W(h)(j) & = W(h_i)(j-j_i) & \text{for } j_i < j < j_{i+1} \medskip \\
W(h)(j_i) & = \displaystyle \frac 1{q^{d-1}} \cdot \frac {q^{d-1}-i} {i+1} 
\end{array}$$
Hence $w(h) = q^{-(d-1)(m+1)} \cdot \binom{q^{d-1}}{m+1} \cdot w(h_1) \cdot w(h_2) 
\cdots w(h_m)$. Taking the sum over all $h \in \mathcal H_n$, we
find the relation:
\begin{align*}
\sum_{h \in \mathcal H_n} (-1)^{\ell(h)} w(h) 
& = \sum_{m=0}^\infty (-1)^m \cdot 
\binom{q^{d-1}}{m+1} \cdot \Big(\frac 1{q^{d-1}}\Big)^{m+1}
\sum_{\substack{h_0, \ldots, h_m\\ \in \mathcal H_{n-1}}} \,\,
\prod_{i=0}^m (-1)^{\ell(h_i)} w(h_i) \\
& = \sum_{m=0}^\infty (-1)^m \cdot 
\binom{q^{d-1}}{m+1} \cdot \Big(\frac 1{q^{d-1}}\Big)^{m+1} 
\cdot \Bigg(\sum_{h \in \mathcal H_{n-1}} (-1)^{\ell(h)} w(h) \Bigg)^{\!m+1} \\
& = 1 \,\,-\,\, \sum_{m'=1}^{q^{d-1}} \binom{q^{d-1}}{m'} \cdot \Big(-\frac 1{q^{d-1}}\Big)^{\!m'} 
\cdot \Bigg(\sum_{h \in \mathcal H_{n-1}} (-1)^{\ell(h)} w(h) \Bigg)^{m'} \\
& = 1 \,\,- \,\, \Bigg(1 - \frac 1{q^{d-1}}
\sum_{h \in \mathcal H_{n-1}} (-1)^{\ell(h)} w(h) \Bigg)^{q^{d-1}}.
\end{align*}
The proposition now follows by comparing the above relation with
Eq.~\eqref{eq:recun}.
\end{proof}

Slightly adapting the arguments of the above proof, we get:
$$\begin{array}{l}
\displaystyle
\big(1 + q^{-1} + q^{-2} + \cdots + q^{-(d-1)}\big)
\cdot \sum_{h \in \mathcal H_n} (-1)^{\ell(h)} \prod_{i=1}^{\ell(h)} \,\, W'(h)(i) \medskip \\
\hspace{17em} \displaystyle
= 1 - \left( 1 - \frac{u_{n-1}}{q^{d-1}} \right)^{1 + q + \cdots + q^{d-1}} = u'_n
\end{array}$$
where $u'_n$ is defined in the statement of Theorem~\ref{theo:mainalg}
(page \pageref{theo:mainalg}).
Using Eq.~\eqref{eq:EC2}, we end up with $\E[C] = q^{nd} u'_n$ and
Theorem~\ref{theo:mainalg} is proved.

\section{Numerical simulations}
\label{sec:numeric}

We recall that the main objects studied in this paper are the random 
Kakeya sets and especially the random variables $X_n$ (defined in \S 
\ref{ssec:statement}) that measure their size. In this last section, We 
present several numerical simulations showing the behaviour of the 
$X_n$'s beyond their mean. 

All our experiments have been done over the field of $2$-adic numbers 
$\Q_2$. We recall briefly that $\Q_2$ is the completion of $\Q$ for 
the $2$-adic norm $|\cdot|_2$ defined, for two integers $n$ and $m$, by:
$$\begin{array}{cl}
& |n|_2 = 2^{-v}
\quad \text{if $2^v$ is the highest power of $2$ dividing $n$} \smallskip \\
\text{and} & \big|\frac n m\big|_2 = \frac{|n|_2}{|m|_2}.
\end{array}$$
The unit ball of $\Q_2$ is the so-called ring of $2$-adic integers $\Z_2$.
Any element $x$ in it can be uniquely written as a convergent series
$$x = s_0 + 2 s_1 + 2^2 s_2 + 2^3 s_3 + \cdots + 2^n s_n + \cdots$$
where the $s_i$'s all lie in $S = \{0, 1\}$ (decomposition in $2$-basis).
The $s_i$'s define mutually independent Bernoulli variables of 
parameter $\frac 1 2$ on $\Z_2$. In other words, generating a random 
element in $\Z_2$ reduces to pick each digit $s_i$ uniformly in $S$ and 
independently.

\subsection{Empirical distribution of the variables $X_n$}

We recall that our universe $\Omega$ is the set of $1$-Lipschitz 
functions $\P^{d-1}(K) \to R^d$. By the results of \S 
\ref{ssec:universe}, $\Omega$ comes equipped with projection maps 
$\Omega \to \Omega^\an_n$ where $\Omega^\an_n$ was defined as the subset 
of $\Omega$ consisting of functions which are constant on each closed 
ball of radius $q^{-n}$ and takes their values in $\llbracket 0, 2^n{-}1 
\rrbracket^d$. Alternatively functions in $\Omega^\an_n$ can be viewed as 
mapping $\P^{d-1}(S_n) \to S^d$ satisfying an extra condition (see \S
\ref{ssec:universealg}). Two other interesting features of $\Omega^\an_n$
are the following: (1)~the measure induces on $\Omega^\an_n$ by the 
projection $\Omega \to \Omega^\an_n$ is the uniform distribution and
(2)~the random variable $X_n$ factors through $\Omega^\an_n$.

We recall also that one can furthermore decompose any function in
$\Omega^\an_n$ as a sum:
\begin{equation}
\label{eq:decompfn}
(g_1 \circ \sp_1) + 2 \cdot (g_2 \circ \sp_2)
+ 2^2 \cdot (g_3 \circ \sp_3) + \cdots + 2^{n-1} \cdot (g_n \circ \sp_n)
\end{equation}
where $g_i : \P^{d-1}(S_i) \to S^d$ is any function and conversely that
any function of the shape~\eqref{eq:decompfn} lies in $\Omega^\an_n$. 
Generating a random function in $\Omega^\an_n$ then reduces to pick
the $g_i$'s ($1 \leq i \leq n$) uniformly and independently.
Picking each $g_i$ is also easy: we
enumerate the elements of $\P^{d-1}(S_i)$ (this can be done using the
results of \S \ref{ssec:projectivealg}) and choose their image randomly
and independantly in $S^d$.
\begin{figure}
\begin{lstlisting}[language=Python,basicstyle=\footnotesize,keywordstyle=\bf\color{blue},otherkeywords={:},commentstyle=\color{darkgreen}\it,texcl=true]
def random_lipschitz_iter(d,n):
    if n == 1:
        # Run over elements $a \in \P^{d-1}(\Z/2\Z)$ according to the position of the first nonzero coordinate
        for piv in range(d):
            for a in xmrange_iter(piv*[[0]] + [[1]] + (d-1-piv)*[[0,1]]):
                # Gererate a random image $b \in (\Z/2\Z)^d$ of $a$
                b = [ randint(0,1) for _ in range(d) ]
                yield(piv, vector(a), vector(b))
    else:
        # Run over elements $a \in \P^{d-1}(\Z/2^{n-1}\Z)$ and call $b$ the image of $a$
        for (piv,a,b) in random_lipschitz_iter(d,n-1):
            q = 2**(n-1)
            # Run over the elements $a+a'$ of the fibre of $\sp_{n,n-1}$ above $a$
            for aprime in xmrange_iter(piv*[[0,q]] + [[0]] + (d-1-piv)*[[0,q]]):
                # Generate a random image $b + b' \in (\Z/2\Z)^d$ (with $2^{n-1}$ divides $b'$) of $a+a'$
                bprime = [ q*randint(0,1) for _ in range(d) ]
                yield(piv, a+vector(aprime), b+vector(bprime))
\end{lstlisting}
\caption{\textsc{SageMath} function generating a random element
in $\Omega^\an_n$ for $K = \Q_2$}
\label{fig:sage}
\end{figure}
The \textsc{SageMath} function presented in Figure~\ref{fig:sage}
generates a random element $f_n \in \Omega^\an_n$ according to the
uniform distribution. More precisely, it returns an iterator over
the sequence of triples $(\piv(a), a, f_n(a))$ where $a$ runs over
$\P^{d-1}(S_n)$. (Note that the first coordinate $\piv(a)$ is useful 
for the recursion but may be then omitted.) One nice feature of this
implementation is its memory cost which (almost) does not grow with $n$.

\begin{figure}
\noindent \hfill
{\renewcommand{\arraystretch}{1.3}
\begin{tabular}{|c||c|c|c|c|c|c|c|}
\hline
$n$ & 5 & 6 & 7 & 8 & 9 & 10 & 11 \\
\hline
\begin{tabular}{@{}c@{}}
$\E[X_n]$ \vspace{-0.5ex} \\ (theoretical value)
\end{tabular}
  & 0.534 & 0.487 & 0.448 & 0.415 & 0.386 & 0.362 & 0.340 \\
\hline
\begin{tabular}{@{}c@{}}
$\E[X_n]$ \vspace{-0.5ex} \\ (empirical value)
\end{tabular}
  & 0.534 & 0.487 & 0.448 & 0.415 & 0.386 & 0.362 & 0.340 \\
\hline
\begin{tabular}{@{}c@{}}
$\sigma[X_n]$ \vspace{-0.5ex} \\ (empirical value)
\end{tabular}
  & 0.0316 & 0.0229 & 0.0169 & 0.0126 & 0.0097 & 0.0076 & 0.0061 \\
\hline
\end{tabular}}
\hfill\null

\caption{Expected value and standard deviation of $X_n$ for $K = \Q_2$ 
and $d=2$}
\label{fig:Xn2}
\end{figure}

\begin{figure}
\noindent \hfill
{\renewcommand{\arraystretch}{1.3}
\begin{tabular}{|c||c|c|c|c|c|c|c|}
\hline
$n$ & 3 & 4 & 5 & 6 & 7 & 8 & 9 \\
\hline
\begin{tabular}{@{}c@{}}
$\E[X_n]$ \vspace{-0.5ex} \\ (theoretical value)
\end{tabular}
  & 0.628 & 0.551 & 0.490 & 0.442 & 0.402 & 0.369 & 0.341 \\
\hline
\begin{tabular}{@{}c@{}}
$\E[X_n]$ \vspace{-0.5ex} \\ (empirical value)
\end{tabular}
  & 0.628 & 0.551 & 0.490 & 0.442 & 0.402 & 0.369 & 0.341 \\
\hline
\begin{tabular}{@{}c@{}}
$\sigma[X_n]$ \vspace{-0.5ex} \\ (empirical value)
\end{tabular}
  & 0.0502 & 0.0371 & 0.0286 & 0.0227 & 0.0187 & 0.0155 & 0.0132 \\
\hline
\end{tabular}}
\hfill\null

\caption{Expected value and standard deviation of $X_n$ for $K = \Q_2$ 
and $d=3$}
\label{fig:Xn3}
\end{figure}

The tables of Figure~\ref{fig:Xn2} (page \pageref{fig:Xn2}) and 
Figure~\ref{fig:Xn3} (page \pageref{fig:Xn3}) show the 
expected value and the standard deviation of some of $X_n$'s observed on 
a sample (renewed for each value of $n$) of $100,000$ random Kakeya sets 
in dimension $2$ and $3$ respectively.
We note in particular that:

\vspace{-1.5ex}

\begin{itemize}
\setlength\itemsep{0pt}
\item the empirical mean agrees with the theoretical one (given by 
Theorem \ref{theo:main}) up to $10^{-3}$,
\item the standard deviation is quite small and seems to converge to 
$0$ faster than the mean, \emph{i.e.} faster than $\frac 1 n$ (although
this phenomenon is less apparent in dimension $3$).
\end{itemize}

\vspace{-1ex}

\begin{figure}
\noindent\hfill
\begin{tabular}{m{2cm}m{10cm}}
$n = 5$: & \includegraphics[width=10cm]{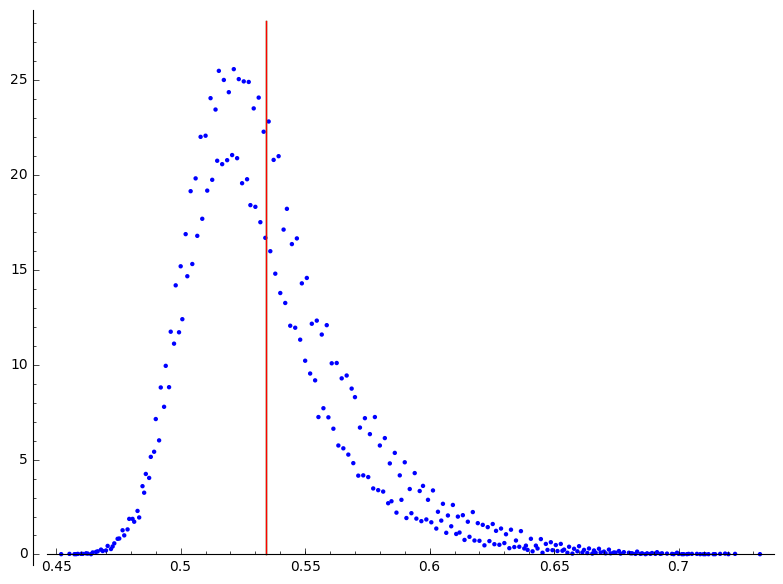} \\
$n = 10$: & \includegraphics[width=10cm]{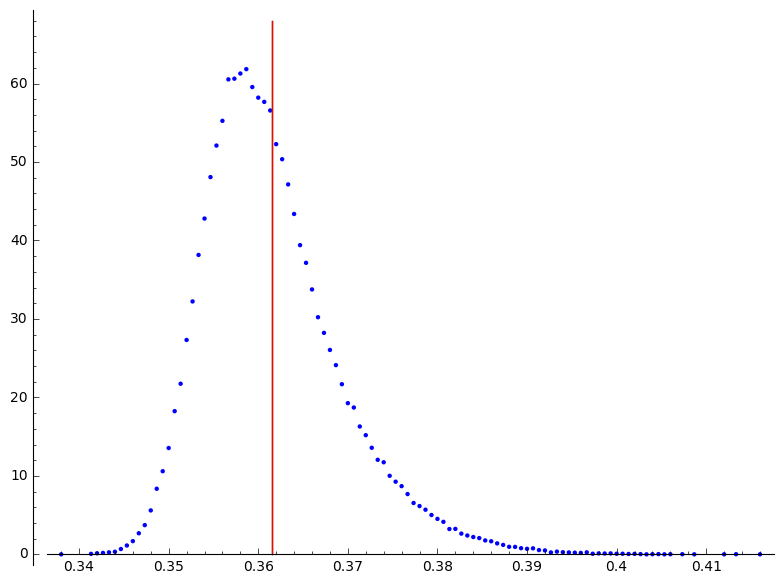} \\
$n = 11$: & \includegraphics[width=10cm]{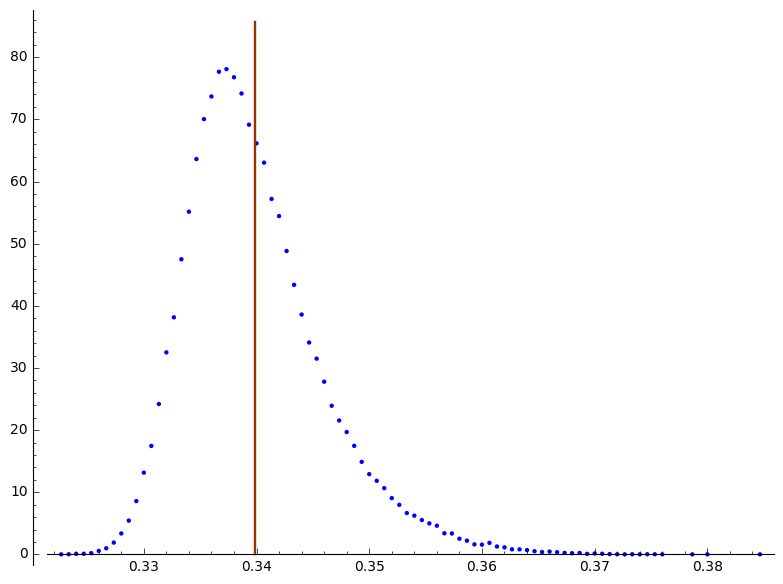}
\end{tabular}
\hfill\null

\bigskip

\caption{Empirical density of $X_n$ for $K = \Q_2$ and $d=2$}
\label{fig:density2}
\end{figure}

\begin{figure}
\noindent\hfill
\begin{tabular}{m{2cm}m{10cm}}
$n = 3$: & \includegraphics[width=10cm]{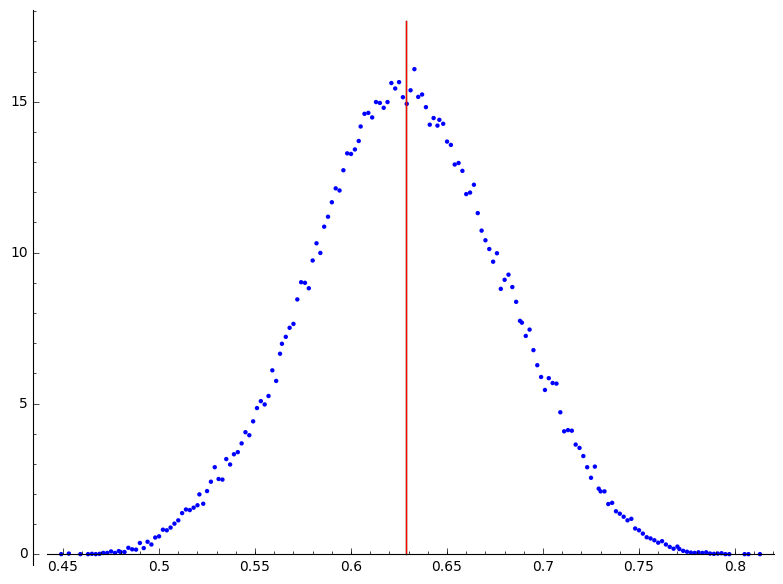} \\
$n = 8$: & \includegraphics[width=10cm]{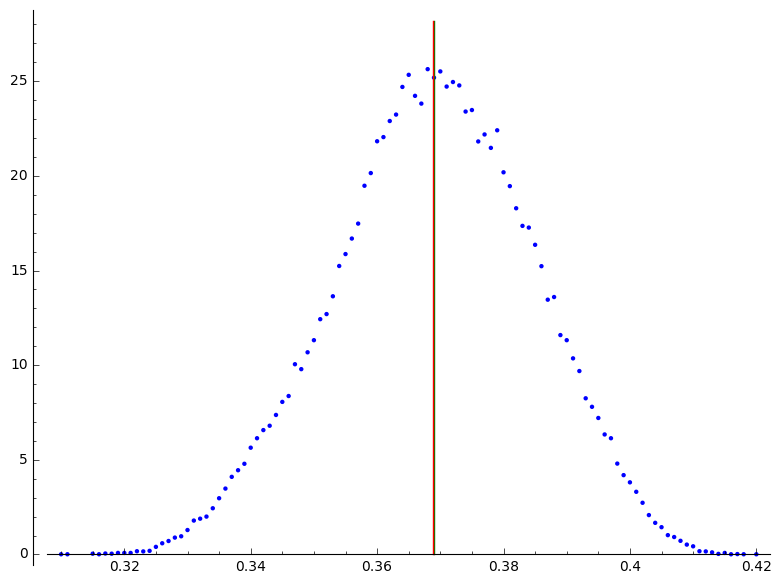} \\
$n = 9$: & \includegraphics[width=10cm]{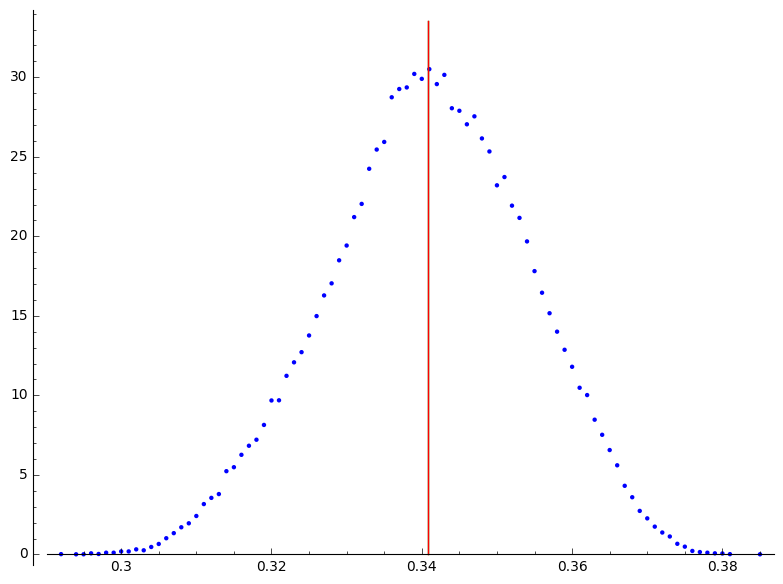}
\end{tabular}
\hfill\null

\bigskip

\caption{Empirical density of $X_n$ for $K = \Q_2$ and $d=3$}
\label{fig:density3}
\end{figure}

Going further one can draw the empirical ``density''\footnote{It is not 
actually a density in the usual sense because the variables $X_n$'s take 
their values in a \emph{discrete} subset of $\R$.}: we subdivise $\R$ 
into small intervals and count, for each of them, the proportion of 
sample points (renormalized by the size of the interval) leading to a 
point in it. The results are displayed in Figure~\ref{fig:density2} 
(page \pageref{fig:density2}) and Figure~\ref{fig:density3} (page 
\pageref{fig:density3}) in dimension $2$ and $3$ respectively. The red 
and green vertical lines (which actually always collapse) in these 
pictures indicate the theoretical mean and the empirical mean of $X_n$ 
respectively.

For a fixed dimension, the density curves (for various $n$) all have a
similar shape. This may suggest that the law of $X_n$ --- correctly
renormalized --- converges to some limit. We believe that it would
be very interesting to investigate further this question. For example
if one can compute this limit and check that it is zero until some
point, it would eventually imply the Kakeya conjecture for almost all
non-archimedean Kakeya sets.

We finally remark that, on the first diagram of Figure~\ref{fig:density2}, 
one can clearly separate two curves. This reflects a parity phenomenon: 
$q^{nd} X_n = 2^{10} X_5$ is even with probability $\approx 73\%$ and 
odd with probability $\approx 27\%$.
The curve below then corresponds to odd values of $X_5$ while the curve 
above corresponds to even values. This phenomenon tends to disappear
rapidly when $n$ grows up.

\subsection{Visualizing a random $2$-adic Kakeya set}

In order to draw a $2$-adic Kakeya set sitting naturally in $\Z_2^d$, 
we will necessarily need to relate $\Z_2$ and $\R$. In order to do so,
we use the ``reverse'' function $r : \Z_2 \to [0,1]$ mapping the 
$2$-adic integer $\sum_{i=0}^\infty 2^i s_i$ (with $s_i \in \{0,1\})$
to the real number $\sum_{i=0}^\infty 2^{-i-1} s_i$. 

Note that $r$ is continuous (it is actually $1$-Lipschitz) but not 
injective since the binary representation of a real number fails to be 
unique in general. For instance $\frac 1 2$ has two preimages which are 
$1 \in \Z_2$ and $-2 \in \Z_2$. The closed intervals $[0,\frac 1 2]$ and 
$[\frac 1 2, 1]$ correspond to the disjoint cosets $2 \Z_2$ and $2 \Z_2 
+ 1$ respectively. Note that the latter are open and closed in $\Z_2$. 
More generally all real number of the form $\frac a{2^n}$ have two 
distinct preimages in $\Z_2$ and there always exist two closed interval 
meeting $\frac a{2^n}$ corresponding to two open closed subsets of 
$\Z_2$.

\begin{rem}
There actually exist closed embeddings $\Z_2 \to \R$; an example of it 
is the Cantor mapping $C$ taking $\sum_{i=0}^\infty 2^i s_i \in \Z_2$ to 
$2 \cdot \sum_{i=0}^\infty 3^{-i-1} s_i$. The image of $C$ is the 
usual triadic Cantor set and $C$ induces an homeomorphism between
it and $\Z_2$. We nevertheless preferred to use $r$ because it maps
$\Z_2$ to an interval whereas $C$ maps $\Z_2$ to a null set. Working
with $C$ has then two disadvantages: it would lead to undrawable 
pictures on the one hand and would not reflect properly the properties 
we want to emphasize on the other hand.
\end{rem}

\begin{figure}
\noindent \hfill
\includegraphics[width=10cm]{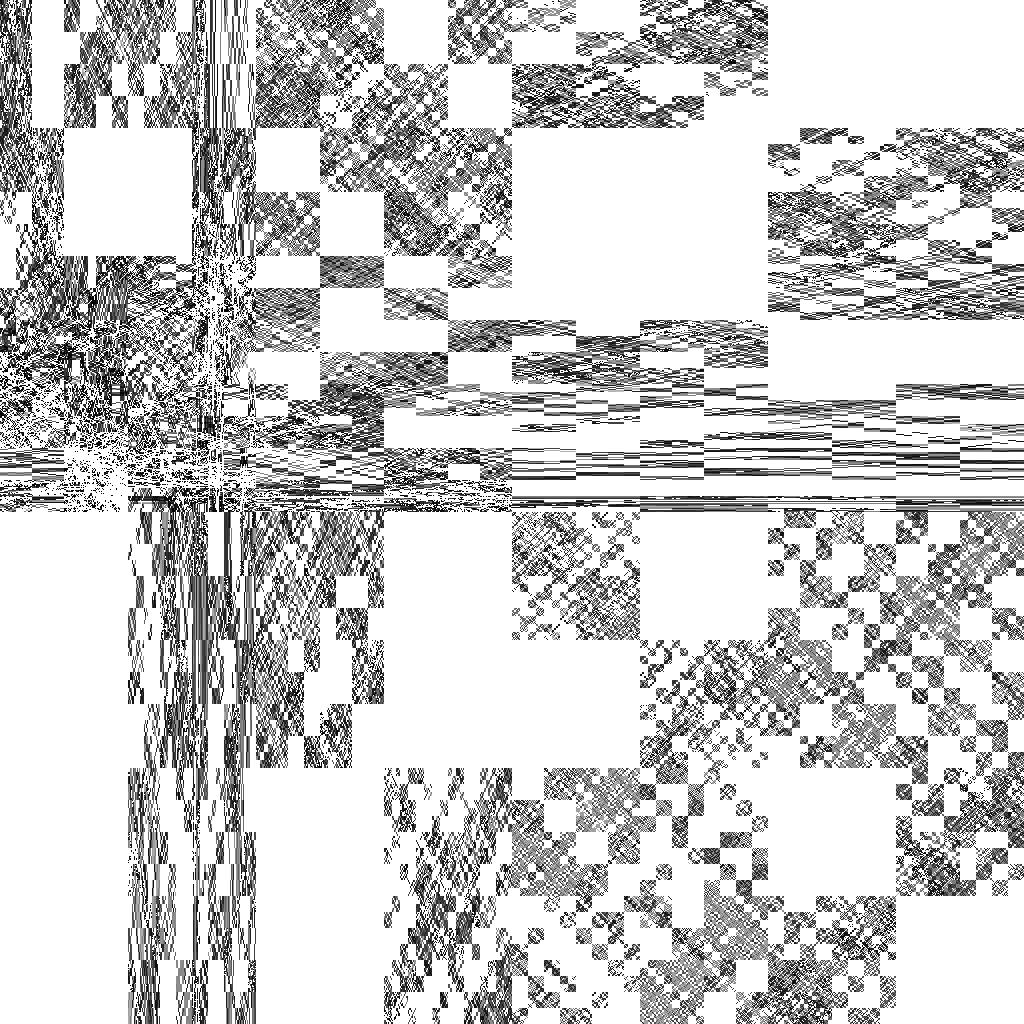}
\hfill \null

\bigskip

\caption{A $2$-dimensional random Kakeya set over $\Q_2$}
\label{fig:kakeya2}
\end{figure}

\begin{figure}
\noindent \hfill
\includegraphics[width=13cm]{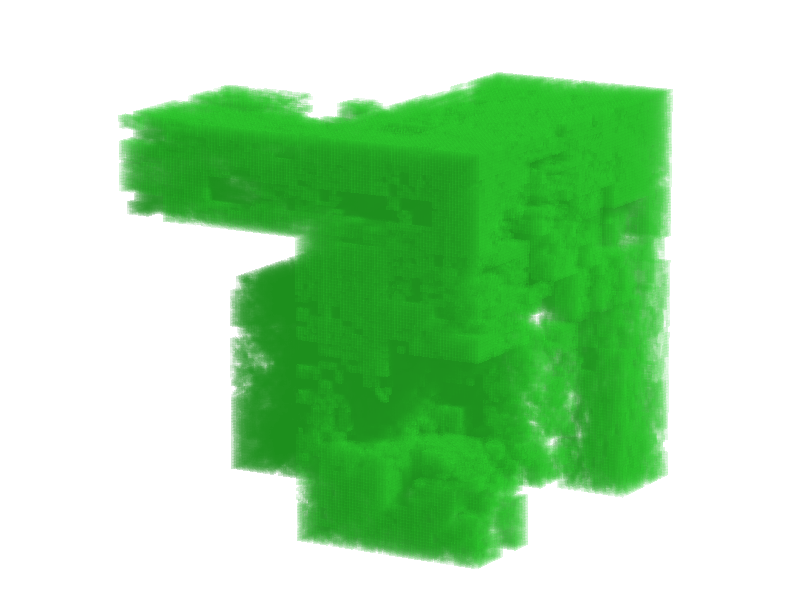}
\hfill \null

\bigskip

\caption{A $3$-dimensional random Kakeya set over $\Q_2$}
\label{fig:kakeya3}
\end{figure}

Viewing $\Z_2^2$ in $\R^2$ through the map $(r,r)$, the picture of 
Figure~\ref{fig:kakeya2} (page \pageref{fig:kakeya2}) represents a 
random Kakeya set --- or more precisely its $(2^{-13})$-neighbourhood --- 
in $\Z_2^2$. An animation showing a $2$-adic needle moving continuously 
in the $2$-adic plane and filling a $2$-adic Kakeya set is available at 
the URL: 
\begin{center} 
\url{http://xavier.toonywood.org/papers/publis/kakeya/kakeya-2d.gif} 
\end{center}
Finally, a $3$-dimensional $2$-adic Kakeya set is displayed on
Figure \ref{fig:kakeya3} and a movie showing it on different angles can
be found at:
\begin{center} 
\url{http://xavier.toonywood.org/papers/publis/kakeya/kakeya-3d.mp4}
\end{center}

\appendix
\section{Appendix: Discrete valuation fields}
\label{app:DVF}

This appendix is dedicated to readers who are not familiar with 
non-archimedean geometry. It presents a quick summary of the most
important basic definitions and facts of the domain.
All the material presented below is very classical.

\subsection*{Definitions}

A \emph{discrete valuation field} is a field $K$ equipped with a map 
$\val : K \to \Z \cup \{+\infty\}$ (the so-called \emph{valuation}) 
satisfying the following axioms:
\begin{enumerate}[(i)]
\setlength\itemsep{0pt}
\item $\val(x) = +\infty$ if and only if $x = 0$,
\item $\val(xy) = \val(x) + \val(y)$,
\item $\val(x+y) \geq \min(\val(x), \val(y))$
\end{enumerate}
for all $x$ and $y$ in $K$. The valuation $\val$ is \emph{non trivial} 
if there exists an element $x \in K^\star$ with $\val(x) \neq 0$. Under
this additional assumption, the set $\val(K^\star)$ is a subgroup of
$\Z$ and therefore is equal to $n\Z$ for some positive integer $n$.
An element $\pi \in K$ of valuation $n$ is called a \emph{uniformizer} 
of $K$. One can always renormalize the valuation (by dividing it by $n$) 
in order to ensure $n=1$.

\medskip

The valuation on $K$ readily defines a family of absolute values 
$|{\cdot}|_a$ ($a > 1$) on $K$ by:
$$\forall a \in (1,\infty), \, \forall x \in K, \quad
|x|_a = a^{-\val(x)}$$
with the convention that $a^{-\infty} = 0$. Each of these absolute 
values defines a distance $d_a$ on $K$ by the usual formula $d_a(x,y) 
= |x-y|_a$. It is easily seen that all these distances define the same 
topology on $K$. We underline that $d_a$ is ultrametric in the sense
that:
\begin{equation}
\label{eq:ultrametric}
\forall x, y, z \in K, \qquad 
d_a(x,z) \leq \max\big(d_a(x,y), d_a(y,z)\big).
\end{equation}
This stronger version of the triangle inequalities has unexpected and
important consequences. 
For instance it implies that $d_a(x,z) = \max\big(d_a(x,y), 
d_a(y,z)\big)$ as soon as $d_a(x,y) \neq d_a(y,z)$, showing then
that every triangle in $K$ is isosceles. Similarly if two balls 
$B_1$ and $B_2$ of $K$ meet, we necessarily have $B_1 \subset B_2$
or $B_2 \subset B_1$.

\medskip

Let $R$ be the closed unit ball of $K$ (this does not depend on the 
parameter $a$); alternatively $R$ is the subset of $K$ consisting of 
elements $x$ with nonnegative valuation. An important remark following 
from axioms~(ii) and~(iii) is that $R$ is a subring of $K$; it is 
usually called the \emph{ring of integers} of $K$. The invertible
elements in $R$ are clearly exactly the elements of norm $1$ (since 
the norm is multiplicative). On the contrary, the open unit ball
$\m$ is an ideal of $R$. It is actually the
unique maximal ideal of $R$ (showing that $R$ is a local ring). It
is moreover principal and generated by any uniformizer of $K$. The 
quotient $k = R/\m$ is a field which is called the \emph{residue 
field} of $K$.

\subsubsection*{Examples}

\noindent {\bf 1.}
Let $p$ be a prime number.
Recall that the \emph{$p$-adic valuation} of a nonzero integer $n$ is 
defined as the greatest integer $v$ such that $p^v$ divides $n$; it is 
often denoted by $v_p(n)$. This construction defines a function $v_p : 
\Z \backslash\{0\} \to \N$. We extend it to a function $\Q \to \Z \cup 
\{+\infty\}$ by setting:
$$v_p(0) = +\infty \quad \text{and} \quad 
\textstyle v_p(\frac a b) = v_p(a) - v_p(b)$$
for $a, b \in \Z$. One checks that $v_p$ satisfies the axioms of a
valuation, turning then $\Q$ into a discrete valuation field. 
A uniformizer of $(\Q, v_p)$ is $p$. Its rings of integers is the
ring $\Z_{(p)}$ consisting of fractions $\frac a b$ where $b$ is not
divisible by $p$. Its residue field is isomorphic to $\Z/p\Z$.

\medskip

\noindent {\bf 2.}
Let $k$ be any field and $K = k(t)$ be the field of univariate rational 
fractions over $k$. Given $f \in K$, $f\neq 0$, let $\ord(f)$ denote the 
order of vanishing of $f$ at $0$, \emph{i.e.} $\ord(f)$ is the unique 
integer for which one can write $f = t^{\ord(f)} \cdot g$ where $g \in 
k(t)$ is defined and does not vanish at $0$. This defines a function
$\ord : K^\star \to \Z$ that we extend to $K$ by letting $\ord(0) =
+ \infty$.
One then checks that $(K, \ord)$ is a discrete valuation field. Its
ring of integers consists of fractions $\frac f g$ where $f$ and $g$
are polynomials with $g(0) \neq 0$. A uniformizer of $(K, \ord)$ is
$t$ and its residue field is canonically isomorphic to $k$.

\subsection*{Completeness}

A discrete valuation field $(K, \val)$ is said \emph{complete} is it
complete\footnote{In the sense that all Cauchy sequences converge.} 
with respect to one (or equivalently all) $d_a$. 
Using the ultrametric triangle inequality~\eqref{eq:ultrametric}, we 
easily check that, assuming that $K$ is complete, a series $\sum_{n\geq 
0} u_n$ (with $u_n \in K$) converges if and only if the sequence 
$(u_n)_{n \geq 0}$ converges to $0$.

\medskip

Let $(K, \val)$ be a discrete valuation field and let $\hat K_a$ be the 
completion of the metric space $(K, d_a)$. One checks that $\hat K_a$ 
does not depend on $a$, so that we can denote it safely simply $\hat K$. 
Observe that the ring operations extend uniquely to $\hat K$, turning 
then it into a field. Similarly the continuous map $\val : K \to \Z 
\cup \{+\infty\}$ extends uniquely to $\hat K$, turning then $\hat K$
into a discrete valuation field. By construction $\hat K$ is moreover
complete. The ring of integers $\hat R$ of $\hat K$ can be seen as the
completion of $R$ or, alternatively, as the topological closure of $R$ 
in $\hat K$. Note moreover that a uniformizer of $K$ remains a 
uniformizer of $\hat K$ (since the valuation on $\hat K$ extends that
on $K$) and that the residue field of $\hat K$ is canonical isomorphic
to that of $K$.

\medskip

Elements in complete discrete valuation fields can be explicitely 
described as the values at a fixed uniformizer of particular power 
series.

\begin{prop}
\label{prop:decompCDVF}
Let $K$ be a complete discrete valuation field. Let $R$ be its ring 
of integers, $k$ be its residue field and $\pi$ be a fixed uniformizer. 
Let $S \subset R$ be a fixed complete system of representatives of $k$
and assume $0 \in S$. Then:
\begin{enumerate}[(1)]
\item 
any element $x \in R$ can be written uniquely as a converging sum:
\begin{equation}
\label{eq:decompCDVR}
x = s_0 + s_1 \pi + s_2 \pi^2 + \cdots
+ s_n \pi^n + \cdots
\end{equation}
with $s_n \in S$ for all $n \geq 0$
\item
any element $x \in K$ can be written uniquely as a converging sum:
$$x = s_v \pi^v + s_{v+1} \pi^{v+1} + s_{v+2} \pi^{v+2} + \cdots
+ s_n \pi^n$$
with $v \in \Z$, $s_n \in S$ for all $n \geq v$. We can moreover
require that $s_v \neq 0$, in which case we have $v = \val(x)$.
\end{enumerate}
\end{prop}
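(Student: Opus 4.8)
The plan is to prove part~(1) by a greedy successive-approximation procedure and then to reduce part~(2) to part~(1) by factoring out a power of $\pi$. Throughout I will use the fact, recalled just before the statement, that in a complete discrete valuation field a series converges as soon as its general term tends to $0$; I will also use that a nonzero element of $S$ is necessarily a unit of $R$, since its image in $k$ is nonzero (because $0 \in S$ and $S \to k$ is a bijection).

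For existence in~(1): given $x \in R$, let $s_0 \in S$ be the unique representative of the class of $x$ in $k = R/\m$, so that $x - s_0 \in \m = \pi R$ and we may write $x - s_0 = \pi x_1$ with $x_1 \in R$. Iterating this construction produces sequences $(s_n)_{n \geq 0}$ in $S$ and $(x_n)_{n \geq 0}$ in $R$ with $x_n - s_n = \pi x_{n+1}$, whence, by an immediate induction,
$$x = s_0 + s_1 \pi + \cdots + s_n \pi^n + \pi^{n+1} x_{n+1}$$
for every $n \geq 0$. The remainder $\pi^{n+1} x_{n+1}$ lies in $\m^{n+1}$, so $s_n \pi^n \to 0$; hence $\sum_{n \geq 0} s_n \pi^n$ converges and the displayed identity shows its sum is $x$. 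For uniqueness in~(1): if $\sum_{n \geq 0} s_n \pi^n = \sum_{n \geq 0} s'_n \pi^n$ with all $s_n, s'_n \in S$, reducing modulo $\m$ gives $s_0 \equiv s'_0 \pmod{\m}$, hence $s_0 = s'_0$; subtracting and dividing by the non-zero-divisor $\pi$ shifts everything down by one index, and an induction yields $s_n = s'_n$ for all $n$.

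To deduce~(2), the case $x = 0$ being trivial, assume $x \neq 0$ and set $v = \val(x)$. Then $\pi^{-v} x \in R$ has valuation $0$, hence belongs to $R^\times$; applying~(1) to it gives a unique expansion $\pi^{-v} x = s_v + s_{v+1}\pi + s_{v+2}\pi^2 + \cdots$ with $s_n \in S$, and $s_v \neq 0$ precisely because $\pi^{-v}x$ is a unit. Multiplying by $\pi^v$ yields the desired expansion $x = s_v \pi^v + s_{v+1}\pi^{v+1} + \cdots$ with $s_v \neq 0$. Conversely, in any such expansion with $s_{v'} \neq 0$ the leading term has valuation exactly $v'$ (as $s_{v'}$ is a unit) while every later term has valuation $> v'$, so the ultrametric inequality forces $\val(x) = v'$, i.e.\ $v' = v$; dividing by $\pi^v$ and invoking uniqueness in~(1) then gives uniqueness in~(2).

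There is no real obstacle here: the argument is the classical one, and the only points deserving a little care are to invoke completeness at precisely the step where convergence of the series is needed, and to use that reduction modulo $\m$ cleanly isolates the lowest-order coefficient, which is exactly what makes both the inductive extraction and the uniqueness work.
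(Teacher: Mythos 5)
Your proof is correct and follows essentially the same approach as the paper: the same greedy extraction of digits modulo $\m$ for existence in~(1), and the same reduce-subtract-divide induction for uniqueness. The only difference is that the paper dismisses part~(2) as ``totally similar'' while you actually carry it out (by factoring out $\pi^{\val(x)}$ and noting that nonzero elements of $S$ are units), which is a perfectly sound way to complete that step.
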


\begin{proof}
We only prove the first statement, the second being totally similar.
We first remark that the series \eqref{eq:decompCDVR} converges since
its general term $s_n \pi^n$ goes to $0$ when $n$ goes to infinity.

Assume first that we are given a decomposition~\eqref{eq:decompCDVR}.
Then $s_0$ has to be congruent to $x$ modulo $\pi$ and therefore is
uniquely determined since $S$ is by definition a complete set of
representatives of $k = R/\pi R$. Substrating $s_0$, dividing by
$\pi$ and applying the same reasoning, we find that $s_1$ is uniquely
determined as well. Repeating this argument again and again, we get
the unicity of the decomposition~\eqref{eq:decompCDVR}.

Now pick $x \in R$. Define $s_0$ as the unique element of $S$ which
is congruent to $x$ modulo $\pi$. Then $r_1 = \frac{x-s_0}{\pi}$ lies
in $R$. We can thus repeat the construction and define $s_1$ as the
unique element of $S$ which is congruent to $r_1$ modulo $\pi$.
We construct this way an infinite sequence $(s_n)_{n \geq 0}$ of
elements of $S$ with the property that 
$x \equiv s_0 + s_1 \pi + s_2 \pi^2 + \cdots + s_{n-1} \pi^{n-1}
\pmod{\pi^n}$ for all $n$. Passing to the limit (and noting that $\pi^n$
goes to $0$), we get~\eqref{eq:decompCDVR}.
\end{proof}

\subsubsection*{Examples}

\noindent {\bf 1.}
The field $\Q$ equipped with the $p$-adic valuation $v_p$ is \emph{not}
complete. Its completion is the field of $p$-adic numbers $\Q_p$. A
uniformizer of $\Q_p$ is $p$ and its residue field is $\Z/p\Z$. The
ring of integers of $\Q_p$ is usually denoted by $\Z_p$; its elements 
are the so-called $p$-adic integers.
According to Proposition~\ref{prop:decompCDVF}, any $p$-adic integer
can be uniquely written as a sum:
$$s_0 + s_1 p + s_2 p^2 + \cdots + s_n p^n + \cdots$$
with $s_n \in \{0, 1, \ldots, p{-}1\}$. It is the decomposition in
$p$-basis of a $p$-adic integer.

\medskip

\noindent {\bf 2.}
Similarly, the field $k(t)$ equipped with the valuation $\ord$ is
\emph{not} complete. Thanks to Proposition~\ref{prop:decompCDVF},
its completion consists of series of the shape:
$$s_v t^v + s_{v+1} t^{v+1} + s_{v+2} t^{v+2} + \cdots + s_n t^n + \cdots$$
with $v \in \Z$ and $s_n \in k$. It is therefore nothing but the field 
of univariate Laurent series over $k$, usually referred to as $k((t))$. 
Its rings of integers is the ring of power series over $k$, namely 
$k[[t]]$. Again its rings of integers is canonically isomorphic to $k$.

\subsection*{The Haar measure}

Let $(K, \val)$ be a \emph{complete} discrete valuation ring with ring 
of integers $R$ and residue field $k$. From now and until the end of
this appendix, \emph{we assume that $k$ is finite}.

The first part of Proposition \ref{prop:decompCDVF} shows that $R$ is 
homeomorphic to $k^{\N}$ (\emph{i.e.} the set of all sequences with 
coefficients in $k$) and therefore is compact.
Since $R$ carries in addition a group structure, it is endowed with a 
unique Haar measure $\mu$ normalized by $\mu(R) = 1$. This measure
extends uniquely to a Haar measure on $K$. Be careful nevertheless
that $\mu(K)$ is infinite.

Under the additional assumptions of this paragraph, it is quite 
convenient to normalize the norm $|\cdot|$ on $K$ by $|\pi| = \frac 1 
{\card k}$ where $\pi$ is any uniformizer. (If the valuation is
normalized so that it takes the value $1$, the above norm is the
norm $|\cdot|_{\card k}$ we have introduced before.) 
The above convention leads to the expected relation:
$$\mu(aE + b) = |a| \cdot \mu(E)$$
for all $a, b \in K$ and all measurable subset $E$ of $K$ (and where
$aE+b$ denotes of course the image of $E$ under the affine 
transformation $x \mapsto ax+b$).

\end{document}